\newtheorem{thm}{Theorem}[section]
\newtheorem*{thm*}{Theorem}
\newtheorem{thmintro}{Theorem}
\newtheorem{cor}[thm]{Corollary}
\newtheorem*{cor*}{Corollary}
\newtheorem{lemma}[thm]{Lemma}
\newtheorem{prop}[thm]{Proposition}
\theoremstyle{definition}
\newtheorem{defn}[thm]{Definition}
\newtheorem{ex}[thm]{Example}
\theoremstyle{remark}
\newtheorem{rem}[thm]{Remark}
\newcommand{\ZZ}{{\mathbb  Z}}
\newcommand{\RR}{{\mathbb  R}}
\newcommand{\CC}{{\mathbb  C}}
\newcommand{\caD}{{\mathcal D}}
\newcommand{\cS}{{\mathcal S}}
\newcommand{\E}{{\mathcal E}}
\newcommand{\F}{{\mathcal F}}
\newcommand{\so}{{{\mathfrak{so}}} }
\newcommand{\hor}{{\rm hor}\, }
\newcommand{\whF}{{\widehat{\F}}}
\newcommand{\mfg}{\mathfrak{g}}
\newcommand{\mfh}{\mathfrak{h}}
\newcommand{\mfa}{\mathfrak{a}}
\newcommand{\mfk}{\mathfrak{k}}
\newcommand{\mft}{\mathfrak{t}}
\newcommand{\Ad}{\operatorname{Ad}}
\newcommand{\codim}{\operatorname{codim}}
\newcommand{\pr}{\operatorname{pr}}
\newcommand{\id}{\operatorname{id}}
\newcommand{\SO}{\operatorname{SO}}
\newcommand{\Isom}{\operatorname{Isom}}
\newcommand{\Lie}{\operatorname{Lie}}
\newcommand{\Vol}{\operatorname{Vol}}
\newcommand{\supp}{\operatorname{supp}}
\newcommand{\cod}{\operatorname{cod}}
\newcommand{\tr}{\operatorname{tr}}
\numberwithin{equation}{section}
\begin{document}

\title[Localization of Chern-Simons type invariants
]{Localization of Chern-Simons type invariants of Riemannian foliations
}

\author{Oliver Goertsches}
\address{Oliver Goertsches, Mathematisches Institut der Universit\"at M\"unchen, Theresienstr.\ 39, 80333 M\"unchen, Germany}
\email{goertsches@math.lmu.de}
\author{Hiraku Nozawa}
\address{Hiraku Nozawa, Department of Mathematical Sciences,
Colleges of Science and Engineering, Ritsumeikan University,
1-1-1 Nojihigashi, Kusatsu, Shiga, 525-8577, Japan}
\email{hnozawa@fc.ritsumei.ac.jp}
\thanks{The second author is partially supported by Research Fellowship of the Canon Foundation in Europe, the EPDI/JSPS/IH\'{E}S Fellowship, the Spanish MICINN grant MTM2011-25656 and JSPS KAKENHI Grant Number 26800047.}
\author{Dirk T\"oben}
\address{Dirk T\"oben, Universidade Federal de S\~ao Carlos, Departamento de Matem\'atica, Rod.\ Washington Lu\'is, Km 235 - C.P.\ 676 - 13565-905 S\~ao Carlos, SP - Brazil}
\email{dirktoben@dm.ufscar.br}

\date{}

\subjclass[2010]{55N25, 57R30, 58J28, 53D35}

\begin{abstract}
We prove an Atiyah-Bott-Berline-Vergne type localization formula for Killing foliations in the context of equivariant basic cohomology.
As an application, we localize some Chern-Simons type invariants, for example the volume of Sasakian manifolds and secondary characteristic classes of Riemannian foliations, to the union of closed leaves. Various examples are given to illustrate our method.
\end{abstract}

\maketitle

\tableofcontents

\addtocontents{toc}{\protect\setcounter{tocdepth}{1}}
\section{Introduction}\label{sec:intro}

\subsection{Background and motivation}

Given a torus action on an oriented compact manifold, the Atiyah-Bott-Berline-Vergne localization formula \cite{AtiyahBott,BerlineVergne} allows one to calculate the integral of certain top cohomology classes like characteristic classes, as a related integral over the fixed point set. This is a generalization of an old result by Bott \cite{Bott1}: Given an oriented compact Riemannian manifold $M$ with a Killing vector field $X$, the Pontryagin numbers of $M$ can be computed in terms of the zero set of $X$. If there are no zeroes, these numbers are zero. But even in this case $M$ can have nontrivial Chern-Simons invariants. As we will see, a natural example is the volume of a Sasakian, or more generally a $K$-contact manifold.

The purpose of this paper is to prove a foliated version of the ABBV formula; for certain Riemannian foliations we want to localize Chern-Simons type invariants to the union of closed leaves. We will apply the formula to compute the volume of $K$-contact manifolds and some secondary characteristic numbers of Riemannian foliations.

In our ABBV type formula, we decompose Chern-Simons type invariants of foliations into leafwise cohomology classes (the tangential part) and basic cohomology classes (the transverse part). This idea is similar to those of Duminy~\cite{Duminy}, Cantwell-Conlon~\cite{CantwellConlon}, Heitsch-Hurder~\cite{HeitschHurder}, Hurder-Katok~\cite{HurderKatok} and Asuke \cite{Asuke2} to prove vanishing theorems of secondary characteristic classes of foliations. However, while they localized the tangential part called the Godbillon or Weil measures, we will localize the transverse part, primary characteristic classes of the normal bundle of foliations, based on equivariant basic cohomology defined in a paper by the first and third author~\cite{GT2010}, see also \cite{Toeben}.

\subsection{Main result: an ABBV type localization formula for Killing foliations}

Our foliated ABBV type formula applies to Killing foliations, of which the main example in this paper is the orbit foliation $\F$ of a nonsingular Killing vector field $\xi$ on a $(2n+1)$-dimensional oriented compact Riemannian manifold $(M,g)$ with only finitely many closed $\xi$-orbits. For simplicity, let us state the formula for this case -- see Theorem \ref{thm:loc} for the general statement. Let $T$ be the closure of the flow generated by $\xi$ in $\Isom (M,g)$, which is a torus. Let $\mft = \Lie(T)$. Take $b\in \mft$ so that the fundamental vector field of $b$ equals $\xi$ and let $\mfa = \mft/\RR b$. Then $\mfa$ acts transversely on $(M,\F)$ (Definition~\ref{def:transverseaction}). Let $H(M,\F)$ and $H_{\mfa}(M,\F)$ denote the basic respectively $\mfa$-equivariant basic cohomology of $\F$  (see Section \ref{sec:eqbascoh}). For a $1$-form $\eta$ on $M$ such that $\iota_{\xi}d\eta=0$, we have the transverse integration operator $\int_{(\F,\eta)}: H(M,\F) \to \RR; [\sigma] \mapsto \int_{M} \eta \wedge \sigma$ associated to $\eta$ (Definition~\ref{defn:trint}), which extends to equivariant basic cohomology $\int_{(\F,\eta)} : H_{\mfa}(M,\F) \to S(\mfa^{*})$.

\begin{thmintro}[Foliated ABBV-type localization formula for nonsingular Killing vector fields with isolated closed orbits]\label{thmintro:1}
For any $\sigma\in H_{\mfa}(M,\F)$, we have an equality
\[
\int_{(\F,\eta)}\sigma = (-2\pi)^{n} \sum_{k} l_k\cdot  \frac{i^*_{L_k}\sigma}{\prod_j \alpha_{j}^{k}},
\]
in the fraction field of $S(\mfa^{*})$, where $i_{L_{k}}:L_k\to M$ are the closed $\xi$-orbits, $\{\alpha_{j}^{k}\}_{j=1}^{n} \subset \mfa^{*}$ are the weights of the isotropy $\mfa$-action at $L_{k}$ and $l_k=\int_{L_k}\eta$.
\end{thmintro}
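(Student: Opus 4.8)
The plan is to deduce the formula from the general localization theorem (Theorem~\ref{thm:loc}) by computing the contribution of each isolated closed orbit explicitly. First I would observe that, for the orbit foliation $\F$ of $\xi$, the leaf closures are precisely the orbits of $T$, so that a closed $\xi$-orbit $L_k$ is a single $T$-orbit diffeomorphic to a circle, and the union $\bigcup_k L_k$ of closed orbits is exactly the set where the transverse $\mfa$-action degenerates, i.e. the ``transverse fixed point set'' to which Theorem~\ref{thm:loc} localizes. In localized equivariant basic cohomology that theorem expresses $\sigma$ as a sum of Gysin pushforwards $\sum_k (i_{L_k})_* \bigl( i_{L_k}^*\sigma / e_\mfa(\nu_k)\bigr)$, where $\nu_k$ is the $\mfa$-equivariant normal data of $L_k$ and $e_\mfa(\nu_k)$ its equivariant Euler class; applying $\int_{(\F,\eta)}$ to this identity is what produces the right-hand side.

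Next I would analyze the local $\mfa$-module at each $L_k$. Since along $L_k$ the tangent space to $\F$ is spanned by $\xi$, the normal bundle of $L_k$ in $M$ coincides with the restriction of the transverse bundle $\nu(\F)=TM/T\F$ to $L_k$, which has rank $2n$. The torus $T$ preserves $L_k$ and acts on this bundle, and the induced isotropy representation of $\mfa$ splits into $n$ oriented $2$-dimensional weight spaces with weights $\alpha_j^k\in\mfa^*$; these weights are nonzero precisely because $L_k$ is isolated among the closed orbits (a vanishing weight would produce a transverse direction fixed by $\mfa$, hence a nearby closed orbit). Consequently $e_\mfa(\nu_k)$ equals $\prod_j\alpha_j^k$ up to a universal Chern--Weil normalization constant.

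I would then compute the effect of transverse integration on a single pushforward term. Using that $\iota_\xi d\eta=0$ — which is exactly what makes $\int_{(\F,\eta)}$ descend to basic cohomology, since it forces $d\eta$ to be basic and hence of transverse degree $\le 2n$ — one checks a projection formula: transverse integration over $M$ of $(i_{L_k})_*\tau$ equals the integral of $\eta$ along the leaf $L_k$ times the value of $\tau$. Because $L_k$ is a single leaf, its transverse equivariant basic cohomology is $S(\mfa^*)$ and $i_{L_k}^*\sigma$ is the corresponding element, while the leaf-direction integral contributes the factor $l_k=\int_{L_k}\eta$. Assembling, each $k$ contributes $l_k\cdot i_{L_k}^*\sigma/e_\mfa(\nu_k)$, and summing over $k$ yields the asserted identity once the normalization is inserted.

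The main obstacle is the precise bookkeeping of the constant $(-2\pi)^n$ together with the orientation signs. This amounts to pinning down the Chern--Weil normalization relating the geometric weights $\alpha_j^k$ to the equivariant Euler form: each oriented rank-$2$ weight space with weight $\alpha$ contributes a factor $-\alpha/2\pi$ to $e_\mfa$, so the $n$ factors combine to $(-2\pi)^{-n}\prod_j\alpha_j^k$ and the reciprocal supplies the prefactor $(-2\pi)^n$. One must fix the orientations of the weight spaces compatibly with the orientation of $M$ and the chosen generator $b$ so that the signs are consistent, and verify that transverse integration commutes with the Gysin maps in the foliated setting. This functoriality, rather than any deep new input, is the delicate point, and it is where the hypothesis $\iota_\xi d\eta=0$ enters in an essential way.
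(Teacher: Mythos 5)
Your proposal is correct and follows essentially the same route as the paper: Theorem~\ref{thmintro:1} is obtained there as the special case of Corollary~\ref{cor:isolatedclosedleaves} to Theorem~\ref{thm:loc}, whose proof likewise inverts the equivariant basic Thom homomorphism via Borel localization, identifies $i^*_{L_k}e_\mfa(\nu L_k,\F)$ with $(-2\pi)^{-n}\prod_j\alpha_j^k$ (Proposition~\ref{prop:Eulerclassweights}), and uses the relative closedness of $\eta$ to prove the projection formula $\int_{(\F,\eta)}\sigma=\int_{(\F|N,i^*\eta)}{u}_*\sigma$ (Lemma~\ref{lem:projformula} and Proposition~\ref{prop:fiberintegration}). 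The point you flag as delicate --- compatibility of transverse integration with the Gysin map, hinging on $\iota_\xi d\eta=0$ --- is exactly where the paper invests its effort.
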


The general ABBV-type theorem, Theorem \ref{thm:loc}, is proved in Section \ref{sec:proofABBV}. This result can be seen as an improvement of a localization result obtained by the third author \cite[Theorem 7.1]{Toeben}. See Remark~\ref{rem:comparison} for a comparison.

\subsection{Application I: localization of the volume of Sasakian manifolds}

We can apply the ABBV type formula in the last section to localize the volume of Sasakian manifolds, or more generally K-contact manifolds, to the union of closed Reeb orbits. Recall that a K-contact structure on a manifold $M$ is a contact metric structure $(\xi,\eta,J,g)$ whose Reeb flow preserves the Riemannian metric $g$. For simplicity of the presentation, we assume that the action of the closure $T$ of the Reeb flow admits only finitely many $S^{1}$-orbits. For $v \in \mft$, denote its fundamental vector field on $M$ by $v^{\#}$.
With the same notation as in the last theorem, with $\xi$ the Reeb vector field and $\eta$ the contact form, we obtain the following application of Theorem 1 (See Section 5.6):

\begin{thmintro}\label{thmintro:2}
Let $(M,\xi,\eta,J,g)$ be a $(2n+1)$-dimensional compact K-contact manifold with only finitely many closed Reeb orbits $L_1$, $\ldots$, $L_N$. Denote the weights of the transverse isotropy $\mfa$-representation at $L_{k}$ by $\{\alpha^{k}_j\}_{j=1}^{n} \subset \mfa^*$ for $k=1$, $\ldots$, $N$. Then, the volume of $M$ is given by
\begin{equation*}
\Vol(M,g) = (-1)^n\frac{\pi^n}{n!} \sum_{k=1}^N l_{k}\cdot \frac{\eta|_{L_{k}}(v^{\#})^n}{\prod_j \alpha^{k}_j(v+\RR b)},
\end{equation*}
where $l_{k}=\int_{L_{k}} \eta$ is the length of the closed Reeb orbit $L_{k}$, and the fractions on the right hand side are considered as rational functions in the variable $v\in \mft$. The total expression is independent of $v \in \mft$.
\end{thmintro}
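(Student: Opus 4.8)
The plan is to realize $\Vol(M,g)$ as a transverse integral of a power of $d\eta$ and then evaluate that integral with Theorem~\ref{thmintro:1} applied to an equivariant extension of $d\eta$.

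First I would reduce the volume to a basic cohomological quantity. For a $K$-contact structure the Riemannian volume form is a fixed multiple of $\eta\wedge(d\eta)^n$; in the conventions used here one has $\eta\wedge(d\eta)^n=2^n n!\,\vol_g$, so that $\Vol(M,g)=\tfrac{1}{2^n n!}\int_M\eta\wedge(d\eta)^n$. Since $\iota_\xi\eta=1$ and $\iota_\xi d\eta=0$, the two-form $d\eta$ is basic ($\mathcal{L}_\xi d\eta=d\iota_\xi d\eta=0$), hence $(d\eta)^n$ defines a class in $H^{2n}(M,\F)$ and, with $\eta$ as transverse-integration one-form, $\Vol(M,g)=\tfrac{1}{2^n n!}\int_{(\F,\eta)}[(d\eta)^n]$. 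Everything now reduces to localizing this transverse integral.

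Next I would construct an equivariant extension. As $T$ preserves $\eta$, for $v\in\mft$ we get $\mathcal{L}_{v^\#}\eta=0$ and therefore $\iota_{v^\#}d\eta=-d(\eta(v^\#))$; consequently $d\eta$ admits a closed extension $\widetilde{d\eta}=d\eta+\mu$ in the Cartan model of $H_\mfa(M,\F)$, whose zero-form part is the moment map $\mu(v+\RR b)=\eta(v^\#)+\lambda(v)$. Here $\lambda\in\mft^*$ is an auxiliary functional with $\lambda(b)=-1$, which is exactly the condition making $\mu$ descend from $\mft$ to $\mfa$; every such $\lambda$ gives a closed extension whose underlying ordinary class is $(d\eta)^n$. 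Two facts will drive the computation: (i) in $\int_{(\F,\eta)}[\widetilde{d\eta}^{\,n}]$ only the pure-form summand $(d\eta)^n$ contributes, since all terms containing $\mu$ have form degree $<2n$, so $\int_{(\F,\eta)}[\widetilde{d\eta}^{\,n}]=\int_M\eta\wedge(d\eta)^n$ for every $\lambda$; and (ii) as $L_k$ is one-dimensional, $i_{L_k}^*d\eta=0$, so $i_{L_k}^*\widetilde{d\eta}=\eta|_{L_k}(v^\#)+\lambda(v)\in\mfa^*$.

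Feeding $\sigma=[\widetilde{d\eta}^{\,n}]$ into Theorem~\ref{thmintro:1} then gives, as an identity of rational functions on $\mfa$,
\[
\int_M\eta\wedge(d\eta)^n=(-2\pi)^n\sum_k l_k\,\frac{\bigl(\eta|_{L_k}(v^\#)+\lambda(v)\bigr)^n}{\prod_j\alpha_j^k}.
\]
The step I expect to require the most care is removing the auxiliary $\lambda$ and simultaneously proving independence of $v$. Since the left-hand side is a constant that does not depend on the chosen extension, this identity holds for every $\lambda$ with $\lambda(b)=-1$. Fixing a generic $v\in\mft$ and choosing $\lambda$ with $\lambda(b)=-1$ and $\lambda(v)=0$ (possible because $v\notin\RR b$), the numerator evaluated at $v+\RR b$ becomes exactly $\eta|_{L_k}(v^\#)^n$, whence
\[
\sum_k l_k\,\frac{\eta|_{L_k}(v^\#)^n}{\prod_j\alpha_j^k(v+\RR b)}=\frac{\int_M\eta\wedge(d\eta)^n}{(-2\pi)^n}.
\]
The right-hand side is a genuine constant, so the sum on the left is independent of $v$, and multiplying by $\tfrac{1}{2^n n!}(-2\pi)^n=(-1)^n\tfrac{\pi^n}{n!}$ yields the claimed formula for $\Vol(M,g)$.
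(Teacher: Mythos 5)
Your strategy is exactly the paper's (Section~\ref{sec:volloc}): write $\Vol(M,g)=\frac{1}{2^nn!}\int_{(\F,\eta)}(d\eta)^n$, extend $d\eta$ to an equivariantly closed basic form, feed its $n$-th power into the isolated-closed-leaf localization formula, and use that $i_{L_k}^*d\eta=0$ because $L_k$ is one-dimensional. Your auxiliary $\lambda\in\mft^*$ with $\lambda(b)=-1$ is just a reparametrization of the paper's choice of a splitting $E:\mfa\to\mft$ (note $\eta(v^\#)+\lambda(v)=\eta\bigl((v+\lambda(v)b)^\#\bigr)$, and $v\mapsto v+\lambda(v)b$ descends to a right inverse of $\mft\to\mfa$), and your "choose $\lambda(v)=0$" step is the paper's "choose $E$ with $E(v+\RR b)=v$". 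The independence-of-$v$ argument is also the same.

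The one genuine problem is the sign of the equivariant extension. With the paper's Cartan differential $(d_\mfa\sigma)(X)=d(\sigma(X))-\iota_X(\sigma(X))$ and your own identity $\iota_{v^\#}d\eta=-d(\eta(v^\#))$, the form $\widetilde{d\eta}(X)=d\eta+\eta(E(X)^\#)$ is \emph{not} closed: one computes
\[
\bigl(d_\mfa\widetilde{d\eta}\bigr)(X)=d\bigl(\eta(E(X)^\#)\bigr)-\iota_{E(X)^\#}d\eta=-2\,\iota_{E(X)^\#}d\eta\neq 0,
\]
so Theorem~\ref{thmintro:1} cannot be applied to $[\widetilde{d\eta}^{\,n}]$ as written. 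The closed extension carries the opposite sign, $\omega(X)=d\eta-\eta(E(X)^\#)$ (Lemma~\ref{lem:equivariantext}). Correcting this, the restriction to $L_k$ is $-\eta|_{L_k}(v^\#)$, the numerator becomes $(-1)^n\eta|_{L_k}(v^\#)^n$, and the prefactor is $\frac{1}{2^nn!}(-2\pi)^n(-1)^n=\frac{\pi^n}{n!}$ --- i.e.\ the formula of Theorem~\ref{thm:volumelocalization} in the body, \emph{without} the $(-1)^n$. That is the version consistent with the sphere computation (Corollary~\ref{cor:voldeformedsphere} gives $\Vol(S^{2n+1})=2\pi^{n+1}/n!$ for the round metric); the extra $(-1)^n$ in the introduction's statement appears to be a misprint, and your sign slip in the extension exactly compensates for it. So: flip the sign of $\mu$, and you will land on the body's (correct) constant rather than the one printed in the introduction.
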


Two motivations to find computable formulas for the volume of Sasakian manifolds were given by Martelli-Sparks-Yau \cite{MSY2,MSY}: they calculated that, restricted to a space of Sasakian metrics on a manifold $M$, the Einstein-Hilbert functional equals, up to a constant, the volume functional (see \cite[Eq.\ (3.14)]{MSY}). Thus understanding the volume of Sasaki manifolds is closely related to the problem of finding Sasaki-Einstein metrics on $M$. A second motivation comes from string theory: restricting to $5$-dimensional Sasaki-Einstein manifolds $M$, the AdS/CFT correspondence is a conjectural duality between type IIb string theory on ${\mathrm{AdS}_5}\times M$ and an $N=1$ superconformal field theory on the conformal boundary of ${\mathrm{AdS}}_5$. Under this duality, the volume of $M$ corresponds to the central charge $a$ of the field theory. With these motivations in mind, Martelli-Sparks-Yau applied a noncompact orbifold version of the classical ABBV localization theorem to an orbifold resolution of the K\"ahler cone of a Sasakian manifold $M$ in order to find a formula for the volume. Our method is purely intrinsic and does not make use of the K\"ahler cone, and as such does not need any technical assumptions on the existence of certain metrics on the cone (see \cite[Footnote 35]{MSY}).

As an important special case, we apply Theorem \ref{thmintro:2} to obtain a computable formula for the volume of toric Sasakian manifolds (see Theorem \ref{thm:localizationvolumeSasakiantoric}). Together with the observation of Martelli-Sparks-Yau \cite{MSY} that the volume of a toric Sasakian manifold equals, up to a constant, the volume of its momentum polytope (see Proposition~\ref{prop:MSY}), our formula also follows from Lawrence's formula (Theorem~\ref{thm:Lawrence}) for the volume of a simple polytope.

A further class of Sasakian manifolds to which we apply our formula is that of (deformations of) homogeneous Sasakian manifolds. We obtain in Corollary \ref{cor:volumehomogeneouscase} a formula for the Sasakian volume in this case which, to our knowledge, has not appeared in the literature before.

\subsection{Application II: localization of secondary characteristic classes of Riemannian foliations}\label{sec:sch}

The classifying space $FR\Gamma_{q}$ of codimension $q$ Riemannian foliations with framed normal bundles is constructed based on Haefliger's work \cite{Haefliger2,Haefliger3}. Secondary characteristic classes of codimension $q$ Riemannian foliations with framed normal bundle (Lazarov-Pasternack \cite{LazarovPasternack1} and Morita~\cite{Morita}) are the pull-back of certain cohomology classes of the classifying space $FR\Gamma_{q}$ by the classifying map (see~\cite{Hurder} for a survey). The topology of $FR\Gamma_{q}$ is rather complicated. Instead of analysing it directly, it is classical to investigate the behavior of secondary classes on well-chosen examples of foliations to retrieve cohomological information of $FR\Gamma_{q}$. This is the motivation for the computation of secondary classes of examples of foliations.

Here we will apply our localization formula to compute these invariants. Given a codimension $q$ Riemannian foliation $\F$ on a smooth manifold with framed normal bundle, we have a characteristic homomorphism
\[
\Delta_{\F} : H(RW_{q}) \longrightarrow H(M;\RR),
\]
where $RW_{q}$ is the dga of the universal characteristic classes consisting of Pontryagin classes $p_{i}$, Euler class $e$ and their transgressions $h_{i}$ (see Section~\ref{sec:charclass}). The elements in the image of $\Delta_{\F}$ are called the secondary characteristic classes of $\F$. It is simple but remarkable that the transgression $h_{i}$ of Pontryagin classes are relatively closed (Lemma~\ref{lem:hI}). This fact allows us to apply our localization formula (Corollary~\ref{cor:locchar}) to compute some characteristic numbers of Killing foliations. The secondary classes of transversely K\"{a}hler foliations can be localized similarly.

For simplicity, we will state the localization formula for transversely K\"{a}hler flows. For a complex codimension $m$ transversely K\"{a}hler foliation $\F$ on $M$ with framed normal bundle, Matsuoka-Morita \cite{Matsuoka} defined a characteristic homomorphism
\[
\Delta_{\F} : H(KW_{m}) \longrightarrow H(M;\RR),
\]
where $KW_{m}$ is a differential graded algebra given by
\[
KW_{m} = \bigwedge (u_1, \ldots, u_{m}) \otimes \left( \RR[s_1, \ldots, s_{m}, \Phi]/\{P \mid \deg P> m\} \right),
\]
which consists of the universal Chern classes, its transgression and the transverse K\"{a}hler class. Below we use the multi-index notation $u_{I}s_{J} := u_{i_{1}} \wedge \cdots \wedge u_{i_{k}} \wedge s_{j_{1}} \wedge \cdots \wedge s_{j_{\ell}}$ for $I = \{i_{1}, \ldots, i_{k}\}$ and $J = \{ j_{1}, \ldots, j_{\ell}\}$. We will denote $s_{J}(\nu \F, \F)=\Delta_{\F}(s_{J})$, $u_{I}(\F)=\Delta_{\F}(u_{I})$ and $u_{I}s_{J}(\F)=\Delta_{\F}(u_{I}s_{J})$.

Let $(M,\F)$ be a manifold with a transversely K\"{a}hler foliation of codimension $m$ such that $\wedge^{m,0}\nu^{*} \F = \wedge^{m} (\nu^{1,0} \F)^{*}$ is topologically trivial. We fix a trivialization $\varphi$ of $\wedge^{m,0}\nu^{*} \F$. Using $\varphi$ we obtain a $1$-form $u_{1}(\F)$ on $M$ which is a primitive of the first Chern form of $\nu^{1,0} \F$ (see Definition~\ref{def:u1}). Thus we can extend the definition of secondary characteristic classes $u_{1}s_{J}(\F)$ to transversely K\"{a}hler foliations such that $\wedge^{m,0}\nu^{*} \F$ is topologically trivial.

\begin{thmintro}
Let $(M,\F)$ be a compact manifold with an orientable taut transversely K\"{a}hler foliation of dimension one and complex codimension $m$. Assume that $\wedge^{m,0}\nu^{*} \F$ is trivial as a topological line bundle and that $\F$ has only finitely many closed leaves $L_{1}$, $\ldots$, $L_{N}$. For a given multiindex $J=\{j_1,\ldots,j_l\}$ with $j_1+\cdots+j_l= 2m$, we have
\begin{equation*}
\int_{M} u_{1} s_J (\F) = \sum_{k=1}^{N} \left(\int_{L_{k}} u_{1}(\F) \right) \frac{i^*_{L_{k}}s_{J,\mfa}(\nu \F, \F)}{i^{*}_{L_{k}}s_{m,\mfa}(\nu \F, \F)},
\end{equation*}
where the $L_{k}$ are the isolated closed leaves of $\F$ and $s_{J,\mfa}(\nu \F, \F)$ is the equivariant characteristic form of $\F$ associated to $s_{J}$. In particular, in the case where $J=\{m\}$, we obtain
\begin{equation}
\int_{M} u_{1}s_{m}(\F) = \sum_{k} \int_{L_{k}} u_{1}(\F).
\end{equation}
\end{thmintro}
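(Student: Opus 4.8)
The plan is to deduce this statement from the foliated localization formula (Theorem~\ref{thm:loc}, in the flow case Theorem~\ref{thmintro:1}) by feeding in the correct data. A taut orientable transversely K\"ahler foliation of dimension one is in particular an orientable Riemannian flow, and the transverse K\"ahler structure forces the generating vector field $\xi$ to be transversely Killing; tautness lets us realize $\xi$ as a genuine Killing field for a bundle-like metric, so that the closure $T$ of its flow in $\Isom(M,g)$ is a torus and $\F$ is a Killing foliation. Thus Theorem~\ref{thm:loc} applies, with transverse real dimension $2m$, so that $n=m$ and the closed leaves $L_1,\dots,L_N$ are exactly the isolated closed $\xi$-orbits. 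The key idea is to apply the formula with $\eta=u_1(\F)$ as the transverse-integration form and with $\sigma=s_{J,\mfa}(\nu\F,\F)\in H_{\mfa}(M,\F)$, the equivariant lift of $s_J(\nu\F,\F)$ obtained from an $\mfa$-invariant basic connection on $\nu\F$ via equivariant Chern--Weil theory.

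First I would check that $u_1(\F)$ is admissible. By construction $du_1(\F)$ is the first Chern form of $\nu^{1,0}\F$, which is basic, so $\iota_{\xi}\,du_1(\F)=0$; hence the transverse integration operator $\int_{(\F,u_1(\F))}$ is well defined on $H(M,\F)$ and extends to $\int_{(\F,u_1(\F))}\colon H_{\mfa}(M,\F)\to S(\mfa^*)$ as in Definition~\ref{defn:trint}. Next comes the degree bookkeeping: writing $s_{J,\mfa}=\sum_{p\ge 0}\beta_p$ with $\beta_p$ of polynomial degree $p$ in the Cartan model, the wedge $u_1(\F)\wedge\beta_p$ can be of top degree $2m+1$ on $M$ only for $p=0$, whose component is $s_J(\nu\F,\F)$ itself. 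Since the hypothesis on $J$ forces $s_J$ to have the same top transverse degree $2m$ as $s_m$, this yields $\int_{(\F,u_1(\F))}s_{J,\mfa}=\int_M u_1(\F)\wedge s_J(\F)=\int_M u_1 s_J(\F)$, a constant lying in $S^0(\mfa^*)=\RR$.

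Now I would apply Theorem~\ref{thm:loc} to $\sigma=s_{J,\mfa}$, obtaining
\[
\int_M u_1 s_J(\F)=(-2\pi)^m\sum_{k=1}^N l_k\,\frac{i^*_{L_k}s_{J,\mfa}(\nu\F,\F)}{\prod_{j}\alpha^k_j},\qquad l_k=\int_{L_k}u_1(\F),
\]
an identity in the fraction field of $S(\mfa^*)$ whose left side is the constant computed above. The final ingredient is the identification of the weight product with the restricted top equivariant Chern class: since $s_m$ is the top Chern class of $\nu^{1,0}\F$, its equivariant lift $s_{m,\mfa}$ is the equivariant Euler class of the complex normal bundle, and its restriction to a closed orbit $L_k$ equals the product of the transverse isotropy weights up to the Chern--Weil normalization, namely $i^*_{L_k}s_{m,\mfa}(\nu\F,\F)=(-2\pi)^{-m}\prod_j\alpha^k_j$. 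Substituting this cancels the prefactor $(-2\pi)^m$ and produces the asserted formula. The special case $J=\{m\}$ then gives $i^*_{L_k}s_{J,\mfa}=i^*_{L_k}s_{m,\mfa}$, so each summand collapses to $\int_{L_k}u_1(\F)$.

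The main obstacle is precisely this last identification: one must verify that $s_{m,\mfa}(\nu\F,\F)$ restricts at each closed orbit $L_k$ to the product of weights of the transverse isotropy $\mfa$-representation with exactly the constant $(-2\pi)^{-m}$ matching the prefactor in Theorem~\ref{thm:loc}, which is an equivariant-Euler-class computation at the fixed locus and where all sign and normalization conventions must be reconciled. A secondary point requiring care is the reduction to the Killing-foliation setting together with the existence of the equivariant basic lift $s_{J,\mfa}$; once these are in place, the admissibility of $u_1(\F)$, the degree count, and the localization formula deliver the result.
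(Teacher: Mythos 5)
Your proposal is correct and follows essentially the same route as the paper's proof: reduce to the Killing case (the paper cites Molino--Sergiescu for taut orientable one-dimensional Riemannian foliations), observe that $u_{1}(\F)$ is relatively closed because $du_{1}(\F)=s_{1}(\nu\F,\F)$ is basic, and apply the localization theorem with $\eta=u_{1}(\F)$ and $\sigma=s_{J,\mfa}(\nu\F,\F)$. The step you single out as the main obstacle --- that $i^*_{L_k}s_{m,\mfa}(\nu\F,\F)$ equals $(-2\pi)^{-m}\prod_j\alpha^k_j$ --- is precisely what the paper asserts by identifying $i^{*}_{L_{k}}s_{m,\mfa}(\nu \F, \F)$ with the equivariant basic Euler class of $\nu L_k$ and invoking Proposition~\ref{prop:Eulerclassweights}, so your normalization is consistent with the paper's conventions.
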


As a consequence (see Corollary~\ref{cor:AsukeExample}), we recover a formula for some secondary characteristic numbers of certain foliations on $S^{2n+1}$ due to Bott~\cite{Bott0}, Baum-Bott~\cite{BaumBott} and Asuke \cite{Asuke3}.

\newtheorem*{org}{Organization of the article}
\begin{org}
Section~\ref{sec:HaMF} is devoted to recall fundamentals of equivariant basic cohomology. The main result, an ABBV type localization formula, is stated in Section~\ref{sec:ABBV}, together with necessary facts on transverse integration operators and Killing foliations. It is proved in Section \ref{sec:proofABBV}, modulo the introduction of the equivariant basic Thom homomorphism which is contained in the Appendix. In Section~\ref{sec:K-contact}, we apply the localization formula to the Reeb flow of K-contact manifolds. We present examples of computations on toric Sasakian manifolds and deformations of homogeneous Sasakian manifolds in Section~\ref{sec:ex}. Section~\ref{sec:second} is devoted to apply the localization formula to secondary characteristic classes of Riemannian and transversely K\"{a}hler foliations.
\end{org}

\newtheorem*{ack}{Acknowledgements}
\begin{ack}
We are grateful to Lana Casselmann for pointing out a mistake in the proof of Theorem~\ref{thm:loc} in a previous version. This paper was partially written during the stay of the second author at Centre de Recerca Matem\`{a}tica (Bellaterra, Spain), Institut Mittag-Leffler (Djursholm, Sweden) and Institut des Hautes \'{E}tudes Scientifiques (Bures-sur-Yvette, France); he is very grateful for their hospitality.
\end{ack}

\section{Equivariant basic cohomology}\label{sec:HaMF}

\subsection{Transverse actions on foliated manifolds}\label{sec:transverseaction}

Let us recall the notion of a transverse action on a foliated manifold introduced in~\cite{Alvarez Lopez}, which is essential to define equivariant basic cohomology in the next section.

Let $M$ be a smooth manifold, and $\Xi(M)$ the Lie algebra of vector fields on $M$. Given a foliation $\F$ on $M$, the Lie algebra of vector fields on $M$ that are tangent to the leaves of $\F$ is denoted by $\Xi(\F)$. A vector field $X$ on $M$ is said to be {\em foliated} if $[X,Y]\in\Xi(\F)$ for all $Y\in \Xi(\F)$. A vector field is foliated if and only if its flow maps leaves of $\F$ to leaves of $\F$ (see~\cite[Proposition~2.2]{Molino}). We call the projection of a foliated field $X$ to $C^{\infty}(TM/T\F)$ a {\em transverse} field. It is easy to see that the set $L(M,\F)$ of foliated fields is an ideal in $\Xi(M)$, and hence the set
\[
l(M,\F)=L(M,\F)/\Xi(\F)
\]
of transverse fields is a Lie algebra with Lie bracket induced from $L(M,\F)$.

\begin{defn}[{\cite[Section 2]{Alvarez Lopez}}]\label{def:transverseaction}
A  {\em transverse action} of a finite-dimen\-sional real Lie algebra $\mfg$ on the foliated manifold $(M,\F)$ is a Lie algebra homomorphism $\mfg\to l(M,\F)$.
\end{defn}
If $\F$ is the trivial foliation by points, this notion coincides with the usual notion of an infinitesimal action on the manifold $M$.

\subsection{Definition of equivariant basic cohomology}\label{sec:eqbascoh}

Let us recall the definition of equivariant basic cohomology for foliated manifolds $(M,\F)$ with a transverse action (\cite[Definition 3.13]{GT2010}), which is a generalization of ordinary equivariant cohomology of Lie group actions on manifolds.

Given a foliation $\F$ on a manifold $M$, recall that the {\em basic de Rham complex} of $(M,\F)$ is defined by
\[
\Omega(M,\F)=\{\sigma\in \Omega(M)\mid \iota_X\sigma=L_X\sigma=0\text{ for all }X\in \Xi(\F)\}
\]
equipped with the restriction of the ordinary exterior differential $d$. An element of $\Omega(M,\F)$ is called a {\em basic} differential form and the cohomology
\[
H(M,\F) := H(\Omega(M,\F),d)
\]
is called the {\em basic cohomology} of $(M,\F)$. It was introduced by Reinhart \cite{Reinhart}. It may be regarded as a replacement for the de Rham cohomology of the leaf space which is well-defined also in case the leaf space is not a differentiable manifold.

\begin{rem}
Take a foliated chart $(x,y)=(x_{1}, \ldots, x_{p}, y_{1}, \ldots, y_{q})$ so that the leaves are defined by $y = const$. Then it is easy to see that any basic form $\alpha$ is of the form $\alpha = \sum f_{I}(y) dy_{I}$, i.e., the pull-back of a differential form on the local space of leaves. In this sense basic cohomology is a natural generalization of the de Rham cohomology of the leaf space.
\end{rem}

It is not difficult to check (see {\cite[Proposition~3.12]{GT2010}}) that we obtain well-defined derivations $\iota_X$ and $L_X$ on $\Omega(M,\F)$ for all $X\in l(M,\F)$, which induces a structure of an $l(M,\F)$-differential graded algebra on $\Omega(M,\F)$. Namely, it satisfies the usual compatibility relations $d^2=0,\, \iota_X^2=0,\, L_X=d\iota_X+\iota_Xd,\, [d,L_X]=0,\, [L_X,\iota_Y]=\iota_{[X,Y]}$ and $[L_X,L_Y]=L_{[X,Y]}$. Thus a $\mfg$-action $\mfg \to l(M,\F)$ on $(M,\F)$ induces a structure of a $\mfg$-differential graded algebra on $\Omega(M,\F)$. Then the {\em Cartan complex} of $\Omega(M,\F)$ \cite{Cartan2} (see also \cite{GS}) is defined by
\[
\Omega_{\mfg}(M,\F):=(S(\mfg^*)\otimes \Omega(M,\F))^\mfg,
\]
where the superscript denotes the subspace of $\mfg$-invariant elements, namely, those $\sigma\in S(\mfg^*)\otimes \Omega(M,\F)$ for which $L_X\sigma=0$ for all $X\in \mfg$. The grading of $\Omega(M,\F)$ is defined by $\Omega_\mfa^k(M,\F)=\bigoplus_{k=2u+v}(S^{u}(\mfg^{*})\otimes \Omega^v(M,\F))^\mfg$. The differential $d_\mfg$ of the Cartan complex $\Omega_{\mfg}(M,\F)$ is given by
\[
(d_\mfg \sigma)(X)=d(\sigma(X))-\iota_X(\sigma(X)),
\]
where $\sigma \in \Omega_{\mfg}(M,\F)$ is regarded as a $\mfg$-equivariant polynomial map $\mfg\to \Omega(M,\F)$; $X \mapsto \sigma(X)$.

\begin{defn}[{\cite[Section 3.6]{GT2010}}]\label{def:eqformaltraction}
The {\em equivariant basic cohomology} of a transverse $\mfg$-action on $(M,\F)$ is defined as
$$
H_\mfg(M,\F):=H(\Omega_\mfg(M,\F), d_{\mfg})\;.
$$
\end{defn}
If $\F$ is the trivial foliation by points, the transverse action is just an infinitesimal action on the manifold $M$ and equivariant basic cohomology reduces to ordinary equivariant de Rham cohomology.

\section{An ABBV-type localization formula in equivariant basic cohomology}\label{sec:ABBV}

\subsection{Definition of Riemannian foliations}
To have a foliated version of an ABBV type localization formula, one needs good transverse dynamics of foliations. Here we recall the definition of Riemannian foliations, which are the main object in this paper.

A {\em Riemannian Haefliger cocycle} of codimension $q$ on a manifold $M$ is a quadruple $(\{U_i\},\{g_{i}\},\{\pi_i\},\{\gamma_{ij}\})$ consisting of
\begin{enumerate}
\item an open covering $\{U_i\}$ of $M$,
\item submersions $\pi_i:U_i\to \RR^q$,
\item Riemannian metrics $g_{i}$ on $\pi_{i}(U_{i})$,
\item transition maps $\gamma_{ij}:\pi_j(U_i\cap U_j)\to \pi_i(U_i\cap U_j)$ such that $\pi_i=\gamma_{ij}\circ \pi_j$ and $\gamma_{ij}^{*}g_{i}=g_{j}$.
\end{enumerate}
Two Riemannian Haefliger cocycles on $M$ are said to be equivalent if their union is a Riemannian Haefliger cocycle on $M$. A {\em Riemannian foliation} of codimension $q$ is defined to be an equivalence class of Riemannian Haefliger cocycles of codimension $q$.

Given a codimension $q$ Riemannian foliation on $M$, the connected components of the union of the fibers of $\pi_{i}$ define a codimension $q$ foliation on $M$, which is denoted by $\F$. We will see that the normal bundle $\nu \F$ admits a natural metric. We obtain a metric on $\nu \F|_{U_{i}}$ by pulling back the metric $g_{i}$ on $\pi_{i}(U_{i})$ by the isomorphism $(\pi_{i})_{*} : \nu_{x}\F \to T_{\pi_{i}(x)}\RR^{q}$ at each point $x\in U_{i}$. Since $\{g_{i}\}$ is invariant under the transition maps $\gamma_{ij}$, these metrics give rise to a well-defined metric $g$ on $\nu \F$, which satisfies $L_{X}g=0$ for any $X \in C^{\infty}(T\F)$. Note that one can recover the Riemannian foliation from $\F$ and $g$. In this article, the pair of $\F$ and $g$ is also called a Riemannian foliation on $M$.

In general, for a codimension $q$ foliated manifold $(M,\F)$, a metric $g$ on $\nu \F$ is called a {\em transverse} metric on $(M,\F)$ if it satisfies $L_{X}g=0$ for any $X \in C^{\infty}(T\F)$. A Riemannian metric on $M$ is {\em bundle-like} with respect to $\F$ if the metric induced on $\nu \F$ via the identification $\nu \F \cong (T\F)^{\perp}$ is a transverse metric on $(M,\F)$. It is easy to see that any Riemannian foliation admits a compatible bundle-like metric (see~\cite[Proposition 3.3]{Molino}). The following is the notion of the completeness in the transverse direction of Riemannian foliations.

\begin{defn}
A Riemannian foliation $\F$ on a connected manifold $M$ is {\em transversely complete} if there exists a bundle-like metric on $(M,\F)$ which is transversely complete, namely, any maximal geodesic orthogonal to the leaves is defined on all of $\RR$.
\end{defn}

\begin{rem}
Transverse completeness in this definition is different from the one in \cite[Definition 4.1]{Molino}. \'{A}lvarez-Masa~\cite[Proposition 15.1]{AlvarezMasa} proved that a transversely complete bundle-like metric becomes complete after a conformal change on the leaves.
\end{rem}

\subsection{Integration of basic differential forms}
To formulate the localization formula for equivariant basic cohomology, one needs to integrate basic cohomology classes of maximal degree. But, in general, there is no good notion of fundamental classes for the leaf spaces. Instead we will use a classical method in foliation theory based on the pairing of differential forms on foliated manifolds. Namely, to integrate a basic cohomology class, we multiply it with a leafwise volume form and then integrate it over the ambient manifold. This can be interpreted in terms of the spectral sequence associated to a foliation.

Let $\F$ be a foliation of dimension $p$ and codimension $q$ on an oriented manifold $M$ of dimension $n=p+q$.
\begin{defn}
A $p$-form $\eta$ on $M$ is {\em relatively closed} if
$$
d\eta(v_1,\ldots,v_{p+1})=0
$$
whenever $p$ of the $p+1$ vectors $v_i$ are tangent to the foliation.
\end{defn}
\begin{ex}
The contact form of a K-contact manifold is relatively closed with respect to the orbit foliation of the Reeb flow, see Section \ref{subsec:volume}. The same holds for certain Chern-Simons forms of Riemannian foliations, see Section \ref{subsec:loccharnumbers}.
\end{ex}

Let $\eta$ be a compactly supported relatively closed $p$-form on $(M,\F)$. Consider the map
\begin{equation}\label{eq:transverseintegration}
\int_{(\F,\eta)}:\Omega^q(M,\F)\longrightarrow\RR;\quad \sigma\longmapsto \int_M \eta\wedge\sigma.
\end{equation}
The following is well-known.
\begin{prop}
$\int_{(\F,\eta)}$ descends to a map $\int_{(\F,\eta)}:H^q(M,\F)\to\RR$.
\end{prop}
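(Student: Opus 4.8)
The plan is to show that the value $\int_M \eta\wedge\sigma$ is unchanged when $\sigma$ is replaced by $\sigma + d\tau$ for a basic $(q-1)$-form $\tau$. Since $\eta$ has compact support, Stokes' theorem gives $\int_M d(\eta\wedge\tau) = 0$, so it suffices to compute $d(\eta\wedge\tau)$ and show its integral against nothing extra vanishes. Expanding, $d(\eta\wedge\tau) = d\eta\wedge\tau + (-1)^p\,\eta\wedge d\tau$, and therefore
\begin{equation*}
\int_M \eta\wedge d\tau = (-1)^p\!\int_M d(\eta\wedge\tau) - (-1)^p\!\int_M d\eta\wedge\tau = -(-1)^p\!\int_M d\eta\wedge\tau.
\end{equation*}
So the whole problem reduces to showing $\int_M d\eta\wedge\tau = 0$.

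The key step, and where the hypotheses enter, is a degree-and-type argument on the integrand $d\eta\wedge\tau$. First I would note that $\eta$ is a $p$-form and $\tau$ is a basic $(q-1)$-form, so $d\eta\wedge\tau$ is an $(n)$-form on the $n=p+q$ dimensional manifold $M$, hence a top form, and the claim is that it vanishes pointwise. Working in a foliated chart $(x_1,\ldots,x_p,y_1,\ldots,y_q)$ in which the leaves are $y=\mathrm{const}$, the remark in the excerpt tells us that any basic form, in particular $\tau$, is a combination of wedges of the $dy_j$ with coefficients depending only on $y$; thus $\tau$ contains exactly $q-1$ of the transverse differentials $dy_j$ and no $dx_i$. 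For $d\eta\wedge\tau$ to be a nonzero top form it must supply all $p$ of the $dx_i$ and the one remaining $dy_j$ from the factor $d\eta$, i.e.\ any surviving term of $d\eta$ must have transverse degree exactly one, equivalently at least $p$ of its arguments are tangent to $\F$. But this is precisely the situation controlled by relative closedness: the hypothesis states that $d\eta(v_1,\ldots,v_{p+1})=0$ whenever $p$ of the $v_i$ are tangent to $\F$. Hence every term of $d\eta$ contributing to the wedge with $\tau$ kills itself, and $d\eta\wedge\tau\equiv 0$.

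Making the last sentence precise is the one point that needs care rather than just bookkeeping. The relative-closedness condition is phrased via evaluation on tangent vectors, so I would translate it into the bigraded language: writing $d\eta = \sum_{a+b=p+1} (d\eta)^{(a,b)}$ for the decomposition into forms of tangential degree $a$ and transverse degree $b$, the condition that $d\eta$ vanishes whenever $p$ arguments are leafwise says exactly that the component $(d\eta)^{(p,1)}$ of tangential degree $p$ vanishes. On the other hand $\tau$, being basic, has tangential degree $0$ and transverse degree $q-1$, so in the product $d\eta\wedge\tau$ only the summand of $d\eta$ of tangential degree $p$ and transverse degree $1$ can produce a form of full tangential degree $p$ and full transverse degree $q$; every other summand has either tangential degree $>p$ (impossible on a $p$-dimensional leaf, so zero) or transverse degree contributing a repeated $dy_j$ (also zero). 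Since that unique relevant summand $(d\eta)^{(p,1)}$ is killed by relative closedness, the integrand vanishes identically and $\int_M d\eta\wedge\tau = 0$, which by the display above yields $\int_M \eta\wedge d\tau = 0$ and proves that $\int_{(\F,\eta)}$ descends to $H^q(M,\F)$.

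I do not expect a serious obstacle here; the only thing requiring attention is the clean identification of ``$p$ of the arguments tangent to $\F$'' with the vanishing of the single bidegree component $(d\eta)^{(p,1)}$, together with the observation that a form containing $q$ distinct $dy_j$'s exhausts the transverse directions so no extra transverse differential from $d\eta$ can survive. Everything else is Stokes' theorem and multilinear-algebra bookkeeping in a foliated chart.
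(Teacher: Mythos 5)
Your proof is correct and follows essentially the same route as the paper: Stokes' theorem kills $\int_M d(\eta\wedge\tau)$, and the proof reduces to the pointwise vanishing of $d\eta\wedge\tau$, which the paper simply asserts ``because $\eta$ is relatively closed and $\alpha$ basic.'' Your bidegree argument in a foliated chart — identifying relative closedness with the vanishing of the $(p,1)$-component of $d\eta$ and observing that this is the only component that can pair with a bidegree-$(0,q-1)$ basic form to give a top form — is a correct and complete justification of that assertion.
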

\begin{proof}
We show that $\int_{(\F,\eta)}$ is trivial on the space $d\Omega^{q-1}(M,\F)$ of exact forms: For a basic $(q-1)$-form $\alpha$, we have $\int_{(\F,\eta)} d\alpha=\int_M d(\eta\wedge\alpha)-\int_M d\eta\wedge\alpha=0$: the first summand is zero due to Stokes theorem, and $d\eta\wedge\alpha=0$ because $\eta$ is relatively closed and $\alpha$ basic.
\end{proof}

\begin{defn}\label{defn:trint}
The map $\int_{(\F,\eta)}:H^q(M,\F)\to\RR$ induced from~\eqref{eq:transverseintegration} on cohomology is called the {\em transverse integration operator with respect to $\eta$}.
\end{defn}

Here we briefly discuss the ambiguity of transverse integration operators on foliated manifolds which comes from the choice of $\eta$. Let $\Omega^{k}_{c} (\F) = C^{\infty}_{c}(\wedge^{k} T^{*}\F)$, where the subscript $c$ indicates compact support. Then $\Omega_{c} (\F)$ is a differential complex with the usual differential $d_{\F}$ on each leaf.
\begin{defn}
The cohomology $H(\Omega_{c} (\F), d_{\F})$ is called the {\em compactly supported leafwise cohomology} of $(M,\F)$.
\end{defn}
\begin{rem}
The leafwise cohomology $H_{c}(\F)$ is often of infinite dimension and non-Hausdorff with the $C^{\infty}$-topology (see~\cite{Alvarez Lopez Hector}).
\end{rem}
Let $r_{\F} : \Omega^{k}_{c}(M) \to \Omega^{k}_{c}(\F)$ be the restriction map. Given a compactly supported relatively closed $p$-form $\eta$, it is easy to see that the transverse integration operator $\Omega^q(M,\F)\to\RR$ with respect to $\eta$ on the differential form level depends only on the leafwise cohomology class of the tangential part $r_{\F}(\eta)$. Let $E^{s,t}_r$ (resp.\ $E^{s,t}_{r,c}$) be the spectral sequence associated to $\F$, which is defined by a natural filtration on $\Omega(M)$ (resp.\ $\Omega_{c}(M)$) and converges to $H(M)$ (resp.\ $H_{c}(M)$). It is known that
\begin{align*}
E^{0,t}_{1} &=H^{t}(\F), &&&
E^{s,0}_{1} &=\Omega^{s}(M,\F), &&&
E^{s,0}_{2} &=H^{s}(M,\F).
\end{align*}
We have analogous equalities in the compactly supported case. Then $E^{0,p}_{2,c}$ is the space of leafwise cohomology classes represented by a compactly supported relatively closed $p$-form. The wedge product on $\Omega(M)$ induces a pairing of $E_{2,c}^{0,p}$ and $E_2^{q,0}$ that reads
\begin{equation}\label{eq:pairing}
E^{0,p}_{2,c} \otimes H^q(M,\F) \longrightarrow H^{p+q}_{c}(M) \cong \RR
\end{equation}
(see e.g. \cite[Section 2]{Sarkaria}). In summary, we have the following.
\begin{prop}
$E_{2,c}^{0,p}$ is the space of compactly supported leafwise cohomology classes represented by a relatively closed $p$-form.
Via the above pairing~\eqref{eq:pairing}, they correspond one-to-one to transverse integration operators.
\end{prop}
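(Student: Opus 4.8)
The plan is to extract both statements from the local structure of the spectral sequence $E^{s,t}_{r,c}$ at the corner $(0,p)$. Recall that it arises from the filtration of $\Omega_c(M)$ by transverse degree, $F^s\Omega^k_c(M)=\{\omega : \iota_{X_1}\cdots\iota_{X_{k-s+1}}\omega=0 \text{ for all } X_i\in\Xi(\F)\}$, so that a form in $F^s\Omega^k_c$ absorbs at most $k-s$ leafwise directions. Two elementary facts drive the argument: since $\dim\F=p$, every $(p+1)$-form automatically lies in $F^1\Omega^{p+1}_c$, and a $p$-form $\eta$ is relatively closed exactly when $d\eta\in F^2\Omega^{p+1}_c(M)$ (this is a verbatim translation of the Definition of relatively closed). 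The stated identification $E^{0,t}_{1,c}=H^t_c(\F)$ gives in particular $E^{0,p}_{1,c}=H^p_c(\F)$, the top leafwise cohomology.

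First I would identify $E^{0,p}_{2,c}$. Since no differential enters the zeroth column, $E^{0,p}_{2,c}=\ker(d_1\colon E^{0,p}_{1,c}\to E^{1,p}_{1,c})$ is a subspace of $H^p_c(\F)$. For $[\alpha]\in H^p_c(\F)$, pick a compactly supported lift $\tilde\alpha\in\Omega^p_c(M)$ with $r_\F(\tilde\alpha)=\alpha$; then $d\tilde\alpha\in F^1\Omega^{p+1}_c$, and $d_1[\alpha]$ is its class in $E^{1,p}_{1,c}$. Unwinding that quotient, $d_1[\alpha]=0$ iff one can subtract some $\beta\in F^1\Omega^p_c$ so that $\eta:=\tilde\alpha-\beta$ satisfies $d\eta\in F^2\Omega^{p+1}_c$. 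As $r_\F(\beta)=0$, such an $\eta$ is a compactly supported relatively closed $p$-form with $r_\F(\eta)=\alpha$, and conversely any such $\eta$ is a lift witnessing $d_1[\alpha]=0$. This yields the first assertion: $E^{0,p}_{2,c}$ is precisely the subspace of $H^p_c(\F)$ of classes represented by $r_\F(\eta)$ for relatively closed $\eta$.

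Next I would treat the pairing. The product pushes $E^{0,p}_{2,c}\otimes E^{q,0}_2$, with $E^{q,0}_2=H^q(M,\F)$, into bidegree $(q,p)$; in total degree $n=p+q$ the leafwise degree cannot exceed $p$ nor the transverse degree $q$, so the only surviving graded piece of $H^n_c(M)$ is $E^{q,p}_\infty\cong H^n_c(M)\cong\RR$ (using that $M$ is oriented). The induced pairing \eqref{eq:pairing} is then literally $([\eta],[\sigma])\mapsto\int_M\eta\wedge\sigma$, i.e.\ $[\eta]$ is sent to the functional $\int_{(\F,\eta)}$ of \eqref{eq:transverseintegration}. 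This assignment $E^{0,p}_{2,c}\to\Hom(H^q(M,\F),\RR)$ is well defined because $\int_{(\F,\eta)}$ depends only on $[r_\F(\eta)]$, as observed before the proposition, and it surjects onto the transverse integration operators, since by Definition~\ref{defn:trint} every such operator equals $\int_{(\F,\eta)}$ for a relatively closed $\eta$ whose class lies in $E^{0,p}_{2,c}$ by the first part.

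The remaining, and I expect hardest, point is that the correspondence is one-to-one, i.e.\ that distinct classes in $E^{0,p}_{2,c}$ give distinct operators---equivalently that the pairing is non-degenerate in its first variable. I would deduce this from Poincar\'e--Lefschetz duality for the spectral sequence of the oriented foliated manifold $(M,\F)$, which identifies $H^q(M,\F)=E^{q,0}_2$ with the dual of $E^{0,p}_{2,c}$ precisely through the pairing \eqref{eq:pairing}; non-degeneracy, and hence the asserted bijection, then follows. (If ``one-to-one'' is meant only in the sense that the operator is determined by its $E^{0,p}_{2,c}$-class, then the well-definedness established above already suffices and this final step is unnecessary.)
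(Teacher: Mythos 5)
Your proof is correct and follows essentially the same route as the paper, which states this proposition as a summary of the preceding standard spectral-sequence facts (the identification $E^{0,p}_{1,c}=H^{p}_{c}(\F)$, the computation of $d_1$ on the zeroth column, and the wedge-product pairing cited to Sarkaria) without writing out a formal proof. Your handling of ``one-to-one'' is also apt: genuine injectivity of $[\eta]\mapsto\int_{(\F,\eta)}$ is exactly the non-degeneracy of the pairing, which the paper does not claim at this level of generality but defers to the duality results collected in Theorem~\ref{thm:amb} for transversely complete Riemannian foliations (where either the pairing is non-degenerate or both sides vanish).
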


Originally Kamber and Tondeur~\cite{KamberTondeur} developed Poincar\'{e} duality theory for basic cohomology and the spectral sequence of tense Riemannian foliations on closed manifolds. By theorems of Masa~\cite{Masa} and Dom\'{i}nguez~\cite{Dominguez} obtained later, their results can be applied to general Riemannian foliations on closed manifolds. The second author~\cite{Nozawa} generalized their results for complete Riemannian foliations whose space of leaf closures is compact. The case of Riemannian foliations of dimension one was established by the second author and Royo Prieto~\cite{NozawaRoyoPrieto}. A part of these works is summarized as follows.
\begin{thm}[\cite{Masa,Dominguez,KamberTondeur,Nozawa,NozawaRoyoPrieto}]\label{thm:amb}
Let $(M,\F)$ be an orientable connected manifold with an orientable transversely complete Riemannian foliation whose space of leaf closures is compact. Then one of the following two cases occurs:
\begin{enumerate}
\item $H^{q}(M,\F) \cong \RR$, $E^{0,p}_{2,c} \cong \RR$ and the pairing~\eqref{eq:pairing} is nondegenerate. In particular, $(M,\F)$ admits a nontrivial transverse integration operator, uniquely up to constants.
\item $H^{q}(M,\F) = \{0\}$, $E^{0,p}_{2,c} = \{0\}$ and the pairing~\eqref{eq:pairing} is trivial.  In particular, $(M,\F)$ admits no nontrivial transverse integration operator.
\end{enumerate}
\end{thm}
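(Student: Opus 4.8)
The plan is to reduce the entire statement to the single dichotomy of whether the foliation is geometrically taut, as measured by the \emph{\'Alvarez class}: the basic cohomology class $[\kappa_b]\in H^1(M,\F)$ of the basic component $\kappa_b$ of the mean curvature one-form of a well-chosen bundle-like metric. The first alternative of the theorem will correspond to $[\kappa_b]=0$ and the second to $[\kappa_b]\neq 0$. I would prove the assertion about $H^q(M,\F)$ and the assertion about $E^{0,p}_{2,c}$ together, playing them off against each other through Poincar\'e duality on the oriented manifold $M$.

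First I would produce a \emph{tense} metric: a bundle-like metric whose mean curvature one-form is basic (hence closed). For closed $M$ this is the theorem of Dom\'inguez, and its extension to transversely complete Riemannian foliations with compact leaf closure space is exactly the content of the works of Nozawa and Nozawa--Royo Prieto cited in the statement; one also checks that $[\kappa_b]$ is independent of the metric. With a tense metric in hand, the transverse Hodge star (available since $M$ is oriented and $\F$ transversely oriented) yields the Kamber--Tondeur twisted Poincar\'e duality
\[
H^{k}(M,\F)\;\cong\;\bigl(H^{q-k}_{\kappa_b}(M,\F)\bigr)^{*},
\]
where $H_{\kappa_b}$ denotes the cohomology of the twisted differential $d_{\kappa_b}=d-\kappa_b\wedge$. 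In top transverse degree this reads $H^{q}(M,\F)\cong\bigl(H^{0}_{\kappa_b}(M,\F)\bigr)^{*}$. Now $H^{0}_{\kappa_b}(M,\F)$ consists of basic functions $f$ with $df=f\,\kappa_b$; since $M$ is connected such an $f$ is either identically zero or nowhere zero, and in the latter case $\kappa_b=d\log|f|$ is exact. Hence $H^{0}_{\kappa_b}(M,\F)\cong\RR$ when $[\kappa_b]=0$ (spanned by $f=e^{h}$ for $\kappa_b=dh$) and $H^{0}_{\kappa_b}(M,\F)=0$ otherwise. This gives $H^{q}(M,\F)\cong\RR$ in the taut case and $H^{q}(M,\F)=0$ in the non-taut case.

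It remains to treat $E^{0,p}_{2,c}$ and the pairing. The pairing~\eqref{eq:pairing} is, by construction, the wedge-and-integrate pairing factoring through $H^{p+q}_{c}(M)=H^{n}_{c}(M)\cong\RR$, i.e.\ the restriction of the Poincar\'e pairing of $M$ to the complementary spectral sequence terms $E^{0,p}_{2,c}$ (total degree $p$) and $E^{q,0}_{2}=H^{q}(M,\F)$ (total degree $q=n-p$). I would invoke the compatibility of Poincar\'e duality with the foliation filtration: duality on $M$ identifies the filtration on $H^{*}(M)$ with the dual of the filtration on $H^{*}_{c}(M)$, and hence induces \emph{perfect} pairings between Poincar\'e-complementary associated graded pieces, in particular between $E^{q,0}_{2}$ and $E^{0,p}_{2,c}$. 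Since $H^{q}(M,\F)=E^{q,0}_{2}$ is at most one-dimensional by the previous step, perfectness forces $E^{0,p}_{2,c}\cong\RR$ with nondegenerate pairing in the taut case, and $E^{0,p}_{2,c}=0$ with trivial pairing in the non-taut case. This simultaneously establishes both alternatives together with the final clause about transverse integration operators, since such an operator is precisely an element of $\bigl(H^{q}(M,\F)\bigr)^{*}$ realized by a relatively closed $p$-form.

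The technical heart---and the step I expect to be the main obstacle---is not the formal duality bookkeeping but the analysis underlying it in the \emph{non-compact} setting. One must know that the relevant basic and leafwise cohomologies are finite-dimensional and Hausdorff, that Hodge theory for the basic complex is available, and that Poincar\'e duality for $M$ restricts correctly to the $E_2$-terms when $M$ itself need not be compact (only the leaf closure space is). Securing tenseness, finite-dimensionality of $H^{q}(M,\F)$, and the nondegeneracy of the induced $E_2$-pairing in this completeness-with-compact-leaf-closure-space framework is exactly what the cited extensions of Masa, Dom\'inguez and Kamber--Tondeur by Nozawa and Nozawa--Royo Prieto supply, and assembling these inputs correctly is where the real work lies.
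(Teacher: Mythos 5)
The paper offers no proof of Theorem~\ref{thm:amb}: it is stated explicitly as a summary of results of Kamber--Tondeur~\cite{KamberTondeur}, Masa~\cite{Masa}, Dom\'inguez~\cite{Dominguez}, Nozawa~\cite{Nozawa} and Nozawa--Royo Prieto~\cite{NozawaRoyoPrieto}, so there is no internal argument to compare against. Your outline is a faithful reconstruction of the route taken in those references --- tenseness of the metric, the metric-independent \'Alvarez class $[\kappa_b]$ governing the dichotomy, the twisted Kamber--Tondeur duality $H^{q}(M,\F)\cong (H^{0}_{\kappa_b}(M,\F))^{*}$, and the elementary computation of $H^{0}_{\kappa_b}$ on a connected manifold --- and those ingredients are correctly assembled. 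The one step where the formal bookkeeping oversells itself is the claim that Poincar\'e duality of $M$ ``induces perfect pairings between Poincar\'e-complementary associated graded pieces, in particular between $E^{q,0}_{2}$ and $E^{0,p}_{2,c}$'': a filtration-compatible perfect pairing on $H^{n}(M)\otimes H^{n}_{c}(M)$ only yields, at best, perfect pairings on the associated graded of the abutment, i.e.\ on $E_\infty$-terms, and nondegeneracy already at the $E_2$ level is precisely the nontrivial duality theorem for the spectral sequence of a Riemannian foliation that the cited works establish (together with the finite-dimensionality and Hausdorffness needed to make sense of it in the transversely complete, non-compact setting). Since you yourself flag exactly this as the technical heart to be supplied by the citations, your proposal ends up delegating the same content to the same sources as the paper does; as a self-contained proof it has a gap at that step, but as a reading of where the statement comes from it is accurate.
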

\begin{rem}
It is well known that, if $M$ is compact, by a theorem of Masa~\cite{Masa}, the first case occurs if and only if $(M,\F)$ is {\em taut}; namely, $M$ admits a Riemannian metric $g$ such that every leaf of $\F$ is a minimal submanifold of $(M,g)$. Note that, for transversely complete Riemannian foliations whose space of leaf closures is compact, the only if part is true (see~\cite[Corollary~1.11]{Nozawa}) while the if part is not true anymore (see~\cite[Example~9.2]{Nozawa}).
\end{rem}

\subsection{Killing foliations}\label{sec:traction}\label{sec:tractiononKilling}

Here we recall the notion of Killing foliations. A Killing foliation admits a natural transverse action with respect to which we localize.

The Molino sheaf $\mathcal{C}$ of a Riemannian foliation $(M,\F)$ is a locally constant sheaf of Lie algebras, whose stalks consist of certain local transverse vector fields. Precisely, a stalk of $\mathcal{C}$ consists of germs of local transverse vector fields on $(M,\F)$ whose natural lifts to the orthonormal frame bundle $M^{1}$ of $(M,\F)$ commute with any global transverse field of $(M^{1},\F^{1})$, where $\F^{1}$ is the canonical horizontal lift of $\F$ (see \cite[Section~4]{Molino}).
\begin{defn}
A Riemannian foliation is called a \emph{Killing foliation} if its Molino sheaf is globally constant.
\end{defn}
\begin{ex}
Riemannian foliations on simply connected manifolds are Killing. By a theorem of Molino-Sergiescu~\cite[Th\'{e}or\`{e}me~A]{MolSer}, any taut orientable $1$-dimensional Riemannian foliation on a compact manifold is Killing. In particular, the orbit foliation of a nonsingular Killing vector field on a compact Riemannian manifold is Killing.
\end{ex}

Any global section of the Molino sheaf $\mathcal{C}$ of a Riemannian foliation $(M,\F)$ is a transverse field on $(M,\F)$ which commutes with any global transverse field of $(M,\F)$. So the space $\mfa$ of global sections of $\mathcal C$ is central in $l(M,\F)$, hence it is an abelian Lie algebra acting transversely on $(M,\F)$. For a Killing foliation $\F$, we will, following~\cite{GT2010}, call the Lie algebra $\mfa$ the {\em structural Killing algebra} of $\F$. By \cite[Theorem~5.2]{Molino}, the orbits of the leaves under the action of the structural Killing algebra are the leaf closures, cf.\ also~\cite[Section~4.1]{GT2010}. In summary, we have the following.
\begin{prop}
For a Killing foliation $(M,\F)$, the space $\mfa$ of global sections of the Molino sheaf, the structural Killing algebra, is abelian and acts transversely on $(M,\F)$ in a canonical way. Its orbits are the leaf closures of $\F$.
\end{prop}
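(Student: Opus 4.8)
The plan is to read all three assertions off the definition of the Molino sheaf together with Molino's structure theorem, since the statement merely consolidates the facts recalled immediately above it; the only genuine inputs are the descent of the commutation relation from the frame bundle to $M$ and the orbit description, both of which I would invoke from Molino's theory rather than reprove.

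First I would show that $\mfa$ is abelian, in fact central in $l(M,\F)$. By definition, each germ in a stalk of $\mathcal{C}$ is a local transverse field whose natural lift to the orthonormal frame bundle $M^{1}$ commutes with every global transverse field of $(M^{1},\F^{1})$. Since $\F$ is Killing, $\mathcal{C}$ is globally constant, so any $X\in\mfa$ is a bona fide global transverse field on $(M,\F)$; pushing the commutation relation down from $M^{1}$ to $M$, I would conclude that $X$ brackets trivially with every global transverse field of $(M,\F)$. As any other $Y\in\mfa$ is itself such a field, $[X,Y]=0$, so $\mfa$ is central in $l(M,\F)$ and \emph{a fortiori} abelian.

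For the transverse action I would only need to recall that, by Definition~\ref{def:transverseaction}, a transverse action of $\mfa$ is a Lie algebra homomorphism $\mfa\to l(M,\F)$. The global sections of $\mathcal{C}$ are by construction transverse fields closed under the bracket, so the tautological map sending a section to its class in $l(M,\F)$ is such a homomorphism, with trivial bracket on the source by the previous step; since $\mathcal{C}$ is intrinsic to the Riemannian foliation, this action is canonical.

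Finally, the identification of the $\mfa$-orbits with the leaf closures of $\F$ is precisely the content of \cite[Theorem~5.2]{Molino}, which states that the closure of each leaf is its orbit under the transverse fields arising from the Molino sheaf, which for a Killing foliation is exactly the $\mfa$-action just constructed. This last point, together with the descent of the commutation relation from $M^{1}$ to $M$, is the only nontrivial ingredient and constitutes the main obstacle; both are structural results of Molino's theory rather than computations, so I would cite them as black boxes.
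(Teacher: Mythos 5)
Your proposal matches the paper's own treatment: the paper derives the proposition as a summary of the immediately preceding discussion, namely that global sections of the Molino sheaf commute with all global transverse fields (hence $\mfa$ is central in $l(M,\F)$, so abelian and tautologically acting transversely), and it cites Molino's Theorem~5.2 for the identification of orbits with leaf closures. Your argument is correct and follows essentially the same route, citing the same structural results as black boxes.
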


Sergiescu's orientation sheaf is trivial for a Killing foliation by definition. Then, if $M$ is connected and the space of leaf closures of $(M,\F)$ is compact, by~\cite[Th\'eor\`eme I]{Sergiescu}, we have $H^{q}(M,\F) \cong \RR$, where $q=\cod \F$. Combining this result with Theorem~\ref{thm:amb}, we get the following.
\begin{cor}
An orientable transversely complete Killing foliation on a connected orientable manifold whose space of leaf closures is compact admits a nontrivial transverse integration operator which is unique up to a constant.
\end{cor}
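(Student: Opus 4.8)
The plan is to deduce the statement from Theorem~\ref{thm:amb} by eliminating its vanishing alternative. A Killing foliation is in particular an orientable Riemannian foliation, so on a connected orientable manifold whose space of leaf closures is compact all hypotheses of Theorem~\ref{thm:amb} are met; consequently exactly one of its two cases holds. Since case (2) produces no nontrivial transverse integration operator, the whole content is to show that case (1) occurs, and for this it suffices to verify $H^{q}(M,\F) \neq \{0\}$, where $q = \cod \F$.

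The key input is Sergiescu's transverse Poincar\'e duality. First I would observe that the defining property of a Killing foliation---global constancy of the Molino sheaf $\mathcal{C}$---forces Sergiescu's transverse orientation sheaf to be trivial: the orientation of the transverse geometry is governed by the structural data encoded in $\mathcal{C}$, so a globally constant Molino sheaf leaves no monodromy to obstruct a global transverse orientation. Granting this triviality, and using that $M$ is connected with compact leaf closure space, Sergiescu's Th\'eor\`eme~I~\cite{Sergiescu} yields the isomorphism $H^{q}(M,\F) \cong \RR$. In particular $H^{q}(M,\F)$ is nonzero.

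With $H^{q}(M,\F) \cong \RR \neq \{0\}$ in hand, case (2) of Theorem~\ref{thm:amb} is ruled out, so case (1) must hold: the pairing~\eqref{eq:pairing} is nondegenerate and $(M,\F)$ admits a nontrivial transverse integration operator, unique up to a scalar. This closes the argument. The proof is essentially a bookkeeping step that combines two external results, so there is no serious analytic obstacle; the only point requiring genuine care is the passage from global constancy of the Molino sheaf to triviality of Sergiescu's orientation sheaf, which is exactly the hypothesis needed to invoke \cite{Sergiescu}. I expect that verification, rather than any cohomological computation, to be the subtle part of the argument.
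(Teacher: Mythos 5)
Your proposal is correct and follows essentially the same route as the paper: the authors likewise note that Sergiescu's orientation sheaf is trivial for a Killing foliation (they simply assert this ``by definition''), invoke Sergiescu's Th\'eor\`eme~I to get $H^{q}(M,\F)\cong\RR$, and combine this with Theorem~\ref{thm:amb} to land in case (1). The only point where you are slightly more informal than necessary is the heuristic justification of the triviality of the orientation sheaf, but this is exactly the step the paper also treats as immediate.
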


\subsection{The localization formula}\label{sec:loc}

The analogue of the classical localization formula for transverse integration operators with respect to a relatively closed form reads as stated below. Let $C$ be the union of the closed leaves of $\F$, and denote the connected components of $C$ by $C_k$. The $C_k$ are submanifolds of $M$.

\begin{thm}[ABBV-type localization formula for transverse integration operators]\label{thm:loc}
Let $\F$ be a transversely complete orientable Killing foliation on an oriented Riemannian manifold $M$ whose space of leaf closures is compact. Furthermore, let $\eta$ be a compactly supported relatively closed $p$-form. Then for any $\sigma\in H_\mfa(M,\F)$, where $\mfa$ is the structural Killing algebra of $\F$, we have an equality
$$
\int_{(\F,\eta)}\sigma=\sum_{k} \int_{(\F|C_{k},i_{C_k}^*\eta)} \frac{i^*_{C_{k}}\sigma}{e_\mfa(\nu C_{k},\F)} =\sum_{k} l_{k}\int_{C_{k}/\F} \frac{i^*_{C_{k}}\sigma}{e_\mfa(\nu C_{k},\F)},
$$
in the fraction field of $S(\mfa^{*})$, where $L_{k}$ is a leaf of $\F|C_{k}$ without holonomy, $l_{k}=\int_{L_{k}}\eta$ and $e_\mfa(\nu C_{k},\F)$ is the equivariant basic Euler form of the normal bundle of $C_{k}$ (see Section \ref{sec:eqcharclasses}).
\end{thm}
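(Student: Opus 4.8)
The plan is to mimic the classical Atiyah-Bott-Berline-Vergne argument, but carried out entirely inside the equivariant basic de Rham complex $\Omega_\mfa(M,\F)$ rather than on the manifold itself. The key structural fact I would rely on is the localization phenomenon in equivariant basic cohomology: away from the closed leaves $C$, the transverse $\mfa$-action has no fixed directions, so an equivariant basic class becomes a coboundary after inverting the relevant polynomials in $S(\mfa^*)$. Concretely, I would set up an equivariant basic Thom homomorphism for the normal bundle $\nu C_k$ of each component $C_k$ (this is exactly what the paper defers to its Appendix), whose composition with restriction to $C_k$ yields multiplication by the equivariant basic Euler form $e_\mfa(\nu C_k,\F)$. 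Since the $\mfa$-action is nondegenerate transverse to $C_k$, this Euler form is invertible in the fraction field of $S(\mfa^*)$, which is what makes the quotient $i^*_{C_k}\sigma / e_\mfa(\nu C_k,\F)$ meaningful.

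First I would reduce to a neighborhood argument. Choose $\mfa$-invariant tubular neighborhoods $U_k$ of the components $C_k$, compatible with the bundle-like metric, and write $\sigma$ globally as a sum of a class supported near $C=\bigcup_k C_k$ plus a class supported on the complement $M\setminus C$. On $M\setminus C$ the transverse action has no closed orbits, so every leaf closure there is positive-dimensional in the transverse direction; the standard Cartan-model argument (choosing a transverse connection one-form $\theta$ with $d_\mfa\theta$ invertible) shows that the restriction of $\sigma$ to $M\setminus C$ is $d_\mfa$-exact over the fraction field. Applying the transverse integration operator $\int_{(\F,\eta)}$, which descends to equivariant basic cohomology and kills $d_\mfa$-exact forms (by the relatively-closed property of $\eta$, exactly as in the Proposition proved in the excerpt), the contribution from $M\setminus C$ vanishes. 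Hence only the neighborhoods $U_k$ contribute.

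Next I would compute each local contribution. On $U_k$ I would use the equivariant basic Thom isomorphism to push the integral down to $C_k$: transverse integration over the fibers of $\nu C_k$ produces the factor $1/e_\mfa(\nu C_k,\F)$, leaving $\int_{(\F|C_k,\, i_{C_k}^*\eta)} i^*_{C_k}\sigma / e_\mfa(\nu C_k,\F)$. This is the middle expression in the statement. For the final equality I would observe that on $C_k$ the form $\eta$ is relatively closed and the foliation $\F|C_k$ has all leaves closed, so transverse integration factors as integration over a leaf $L_k$ without holonomy times integration over the transverse leaf space $C_k/\F$; the length $l_k=\int_{L_k}\eta$ comes out as a constant because $r_\F(\eta)$ is a leafwise-closed one-form whose period over a leaf is locally constant, and $C_k$ is connected.

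**Main obstacle.**

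The delicate point is the rigorous construction and naturality of the equivariant basic Thom homomorphism, together with verifying that the Euler form $e_\mfa(\nu C_k,\F)$ is genuinely invertible over $S(\mfa^*)$. In the ordinary ABBV setting one uses an honest tubular neighborhood and fiber integration of an equivariant Thom form; here the ``fibers'' live in the transverse direction of a foliation, so one must check that the transverse integration is well-defined on equivariant basic forms, commutes appropriately with $d_\mfa$, and that the normal $\mfa$-representation at $C_k$ has nonzero weights $\alpha_j^k$ (so that $\prod_j \alpha_j^k \ne 0$ and the Euler form is a unit). I would isolate this entirely into the Appendix's Thom homomorphism and the nondegeneracy of the isotropy representation at closed leaves, since that is precisely where the Killing (equivariantly formal) structure of $\F$ is essential. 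The remaining steps are then formally parallel to the classical proof.
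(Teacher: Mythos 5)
Your architecture matches the paper's: foliated tubular neighborhoods of the $C_k$, an equivariant basic Thom homomorphism whose composition with restriction is multiplication by $e_\mfa(\nu C_k,\F)$, invertibility of that Euler form over the fraction field of $S(\mfa^*)$, and the compatibility of $\int_{(\F,\eta)}$ with fiber integration (the paper's Proposition \ref{prop:fiberintegration}, which is exactly where the relative closedness of $\eta$ is used and where the constants become $1$). The one step where you genuinely diverge is how the complement of $C$ is killed. The paper does not argue directly on $M\setminus C$; it invokes the Borel-type localization theorem for Killing foliations (\cite[Theorem 5.2]{GT2010}), which says that $i^*:\widehat H_\mfa(M,\F)\to \widehat H_\mfa(C,\F)$ is an isomorphism after tensoring with $Q(\mfa^*)$, and then combines this with $i^*i_*1=e_\mfa(\nu C,\F)$ to exhibit $S=\sum_k i^*_{C_k}/e_\mfa(\nu C_k,\F)$ as the explicit inverse of $i_*$; the identity $\sigma=i_*S\sigma$ is then integrated. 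You instead propose a Berline--Vergne-style argument: build an invariant transverse one-form $\theta$ on $M\setminus C$ with $d_\mfa\theta$ invertible and conclude that $\sigma|_{M\setminus C}$ is $d_\mfa$-exact over the fraction field. That is a legitimate alternative, essentially re-proving the Borel localization statement by hand; what the paper's route buys is that all the delicacy (covering $M\setminus C$ by strata on which a single transverse Killing field is nonvanishing, and controlling which elements of $S(\mfa^*)$ must be inverted) is packaged into the cited theorem.

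Two cautions on your version. First, the phrase ``write $\sigma$ globally as a sum of a class supported near $C$ plus a class supported on the complement'' is not literally available for cohomology classes; the correct mechanism is either a Mayer--Vietoris/excision argument or the form-level identity $\sigma=d_\mfa(\theta\wedge\cdots)+(\text{form with support in }\bigcup_k U_k)$ after localization, and you should commit to one. Second, you locate the essential use of the Killing hypothesis in the Thom construction and the nonvanishing of the isotropy weights at closed leaves; in fact the harder and more essential use is precisely in the complement/Borel localization step -- the paper remarks explicitly that for a general isometric transverse action it is not known whether such a localization theorem holds, whereas the nondegeneracy of the transverse isotropy representation at a closed leaf is comparatively routine (Proposition \ref{prop:Eulerclassweights}).
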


\begin{cor}\label{cor:isolatedclosedleaves}
If, in the situation of Theorem \ref{thm:loc}, the closed leaves of $\F$ are isolated, then we have for any $\sigma\in H_{\mfa}(M,\F)$ that
\[
\int_{(\F,\eta)}\sigma = (-2\pi)^{q/2} \sum_k l_k\cdot  \frac{i^*_{L_k}\sigma}{\prod_j \alpha_{j}^{k}},
\]
where $q$ is the codimension of $\F$, $i_{L_{k}}:L_k\to M$ are the closed leaves of $\F$,  $\{\alpha_{j}^{k}\}_{j=1}^{q/2} \subset \mfa^{*}$ are the weights of the transverse isotropy $\mfa$-action at $L_{k}$ and $l_k=\int_{L_k}\eta$.
\end{cor}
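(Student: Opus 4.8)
The plan is to deduce the corollary directly from Theorem~\ref{thm:loc} by specializing the right-hand side to the situation in which every connected component of the union of closed leaves is a single leaf. When the closed leaves are isolated, each component $C_k$ is a closed leaf $L_k$, so the restricted foliation $\F|C_k$ has $L_k$ as its only leaf and the transverse leaf space $C_k/\F$ is a single point. Consequently the quotient integral $\int_{C_k/\F}$ appearing in Theorem~\ref{thm:loc} collapses to evaluation at that point, and both $i^*_{L_k}\sigma$ and $e_\mfa(\nu L_k,\F)$ restrict to elements of $S(\mfa^*)$, carrying no transverse form-degree since the base is transversely a point. Thus the formula reduces to $\int_{(\F,\eta)}\sigma=\sum_k l_k\,\bigl(i^*_{L_k}\sigma\bigr)\big/e_\mfa(\nu L_k,\F)$, and it remains only to identify the denominator with $(-2\pi)^{-q/2}\prod_j\alpha^k_j$.

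The key step is therefore the evaluation of the equivariant basic Euler form at an isolated closed leaf. Since $\mfa$ is abelian and acts on $\nu\F$ by transverse isometries, the transverse isotropy representation at $L_k$ splits the normal space into a sum of $q/2$ oriented $\mfa$-invariant $2$-planes, on each of which $\mfa$ acts by infinitesimal rotation with a weight $\alpha^k_j\in\mfa^*$. Applying the splitting principle together with the Chern-Weil description of the equivariant basic Euler form (Section~\ref{sec:eqcharclasses}), the contribution of each oriented $2$-plane of weight $\alpha$ to $e_\mfa$ restricted to the transverse point is $\tfrac{-\alpha}{2\pi}$, so that
\[
e_\mfa(\nu L_k,\F)\big|_{L_k}=\prod_{j=1}^{q/2}\frac{-\alpha^k_j}{2\pi}=(-2\pi)^{-q/2}\prod_{j=1}^{q/2}\alpha^k_j .
\]
Here isolatedness of $L_k$ is precisely what guarantees that no weight $\alpha^k_j$ vanishes: a zero weight would produce a transverse fixed direction, hence a positive-dimensional family of closed leaves through $L_k$. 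In particular the Euler form is invertible in the fraction field of $S(\mfa^*)$, which is what allows us to divide.

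Substituting this evaluation into the reduced formula gives
\[
\frac{i^*_{L_k}\sigma}{e_\mfa(\nu L_k,\F)}=(-2\pi)^{q/2}\,\frac{i^*_{L_k}\sigma}{\prod_j\alpha^k_j},
\]
and summing over $k$ yields the asserted identity. I expect the only genuinely delicate points to be bookkeeping ones: fixing the orientation and normalization conventions so that each plane contributes exactly $\tfrac{-\alpha}{2\pi}$ (this is what produces the sign $(-2\pi)^{q/2}$ rather than $(2\pi)^{q/2}$), and checking that the $2$-plane splitting of $\nu\F$ at $L_k$ is compatible with the transverse orientation used to define $e_\mfa$. The heart of the matter, computing $e_\mfa$ at an isolated leaf in terms of the isotropy weights, is the exact transverse analogue of the classical computation of the equivariant Euler class at an isolated fixed point of a torus action, and it should go through verbatim once the equivariant basic Chern-Weil formalism of Section~\ref{sec:eqcharclasses} is in place.
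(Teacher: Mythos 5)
Your argument is correct and is essentially the intended one: the paper leaves the corollary without an explicit proof, but the two ingredients you use -- collapsing $\int_{C_k/\F}$ to evaluation at a point when $C_k=L_k$ is a single leaf, and computing $e_\mfa(\nu L_k,\F)=\frac{1}{(-2\pi)^{q/2}}\prod_j\alpha_j^k$ via the blockwise Pfaffian of the isotropy representation -- are exactly Theorem~\ref{thm:loc} combined with Proposition~\ref{prop:Eulerclassweights}, including the observation that the lower-degree terms vanish because $\Omega(L_k,\F)$ is trivial in positive degrees. Your sign bookkeeping (each $2$-plane contributing $-\alpha/2\pi$) matches the paper's convention for $\det^{1/2}$ in Definition~\ref{defn:eqbasicEulerform}, and your remark that isolatedness forces all weights to be nonzero is a correct justification for invertibility in the fraction field.
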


\begin{ex}
A Riemannian foliation on a compact manifold is always transversely complete and the space of leaf closures is compact.
\end{ex}

\begin{ex}
The orbit foliation $\F$ of a nonsingular Killing vector field $X$ on a complete Riemannian manifold $(M,g)$ satisfies the assumption if and only if the space of the orbit closures is compact. Here the closure $T$ of the one parameter subgroup generated by $T$ is an abelian subgroup of the isometric group of $(M,g)$. The space of leaf closures of $\F$ is nothing but the orbit space of the $T$-action on $M$.
\end{ex}

\begin{rem}
Since $C_{k}$ is a union of closed leaves, $C_{k}/\F$ is an orbifold. The integration $\int_{C_{k}/\F}$ is defined as follows: We choose orientations of $C_{k}$ and $\nu C_{k}$ (see \cite[Corollary 4.8]{Toeben} for orientability) compatible with the orientation of $M$. The integration on the right side is taken with respect to the orientation of $C_{k}$. The integration of a volume form $\sigma$ on an orbifold, as it appears on the right hand side, is defined in terms on the orbifold fundamental class similarly to the manifold case. Via a partition of unity, one can assume the support of $\sigma$ to lie in the domain of an orbifold chart $U=\widetilde U/\Gamma_{k}$, where $\widetilde U$ is open in $\RR^n$ and $\Gamma$ is a finite group acting on $\widetilde U$. By definition $\sigma$ has a lift $\widetilde\sigma$ in $\widetilde U$. We define $\int_U\sigma:=\frac{1}{|\Gamma|}\int_{\widetilde U}\widetilde \sigma$ where the right hand is the usual integral.
\end{rem}

\begin{rem}\label{rem:comparison}
This theorem can be regarded as an improvement of \cite[Theorem 7.2]{Toeben}. The ambiguity in the choice of transverse integration operator mentioned in Theorem~\ref{thm:amb} results there in constants on the right side of \cite[Theorem 7.2]{Toeben} which are hard to determine. In contrast, because of the coherent use of a relatively closed form $\eta$ in both sides of the equation, here the constants turn out to be $1$ (see Section \ref{app:basicthom}).

This localization formula is also more versatile. Whereas the corresponding formula in \cite{Toeben} was used to compute basic primary characteristic numbers, we can now also compute secondary characteristic numbers and the volume of K-contact manifolds as we will see later.
\end{rem}
Our proof of this theorem runs along the lines of the classical proof of Atiyah and Bott \cite{AtiyahBott}, but makes use of several transverse analogues of classical objects, like the (equivariant) basic Thom isomorphism (see the appendix).

\begin{rem} The hypothesis of Theorem~\ref{thm:loc} that the foliation is Killing is essential in the proof. For example, for an arbitrary isometric transverse action of an abelian Lie algebra on a Riemannian foliation, we do not know whether a Borel type localization theorem \cite[Theorem 5.2]{GT2010} holds (see, for example, \cite[Proof of Proposition 3.6]{GT2010}).
\end{rem}

\section{Proof of the ABBV-type localization formula}\label{sec:proofABBV}

\subsection{Some facts on transverse integration operators}\label{app:basicthom}

Let $M$ be an oriented connected manifold. Let $\F$ be an oriented transversely complete Riemannian foliation. Let $N$ be a connected component of the union of closed leaves of $\F$. By transverse completeness, the normal exponential map $\exp : \nu \F \to M$ is well-defined. By using $\exp$, we can take a saturated tubular neighborhood $U$ of $N$ with a projection ${u} : U \to N$ which maps the leaves of $(U,\F)$ to the leaves of $(N,\F)$. Let ${u}_*:\Omega^{r+\bullet}_{cv}(U)\to \Omega^{\bullet}(N)$ be the integration along the fibers (see for instance in \cite[I.7.12]{GHV}), where $\Omega_{cv}$ denotes the vertically compactly supported de Rham complex and $r$ is the dimension of the fibers of ${u}$. Since ${u} :(U,\F)\to (N,\F)$ is foliated, the fiber integration ${u}_*$ descends to a chain map ${u}_*:\Omega^{r+\bullet}_{cv}(U,\F)\to \Omega^{\bullet}(N,\F)$ of basic complexes (see \cite[Proposition 4.1]{Toeben}).

We will use the following well known fact.
\begin{lemma}
Let $\mathcal{I} : \Omega_{cv}^{k}(U \times [0,1]) \to \Omega_{cv}^{k-1}(U)$ be the integration along the $[0,1]$-fibers. Then, we have
\[
j_{1}^{*} \alpha - j_{0}^{*} \alpha = d \mathcal{I} \alpha + \mathcal{I} d \alpha
\]
for any $\alpha \in \Omega_{cv}^{k}(U \times [0,1])$, where $j_{t} : U \to U \times [0,1]$ is defined by $j_{t}(x)=(x,t)$ for $t=0$ and $1$.
\end{lemma}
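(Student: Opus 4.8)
The plan is to recognize this as the classical fiberwise homotopy (chain-homotopy) formula for the projection $\pi : U \times [0,1] \to U$, and to prove it by the standard decomposition of forms on a product with an interval. The only point requiring attention beyond the textbook case is that everything must stay inside the vertically compactly supported complexes, but since $[0,1]$ is compact this causes no real difficulty.

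First I would record the canonical decomposition. Writing $t$ for the coordinate on $[0,1]$, every $\alpha \in \Omega_{cv}^{k}(U \times [0,1])$ can be written uniquely as
\[
\alpha = \alpha_0 + dt \wedge \alpha_1,
\]
where $\alpha_0$ and $\alpha_1$ are $t$-dependent families of forms on $U$ (of degrees $k$ and $k-1$) containing no $dt$. In these terms the fiber integration is $\mathcal{I}\alpha = \int_0^1 \alpha_1\, dt$, the integral being taken coefficientwise in $t$; since $\supp\alpha$ is compact along the fibers of $u$ and $[0,1]$ is compact, $\mathcal{I}\alpha$ again lies in $\Omega_{cv}^{k-1}(U)$, so $\mathcal{I}$ is well defined. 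Note also that $j_t^* dt = 0$, so $j_t^*\alpha = \alpha_0|_{t}$ is the restriction of $\alpha_0$ to the slice $U \times \{t\}$, and therefore $j_1^*\alpha - j_0^*\alpha = \alpha_0|_1 - \alpha_0|_0$.

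Next I would split the exterior derivative as $d = d_U + dt \wedge \partial_t$, where $d_U$ differentiates in the $U$-directions and $\partial_t$ differentiates the $t$-dependence of the coefficients. A short computation gives $d(dt\wedge\alpha_1) = -\,dt\wedge d_U\alpha_1$, hence
\[
d\alpha = d_U\alpha_0 + dt\wedge\bigl(\partial_t\alpha_0 - d_U\alpha_1\bigr).
\]
Applying $\mathcal{I}$ and using that $d_U$ commutes with $\int_0^1(\,\cdot\,)\,dt$ yields
\[
\mathcal{I}d\alpha = \int_0^1 \partial_t\alpha_0\, dt \;-\; d_U\!\int_0^1 \alpha_1\, dt = \bigl(\alpha_0|_1 - \alpha_0|_0\bigr) - d\,\mathcal{I}\alpha,
\]
where the first term is evaluated by the fundamental theorem of calculus applied to the coefficients. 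Rearranging gives $d\mathcal{I}\alpha + \mathcal{I}d\alpha = \alpha_0|_1 - \alpha_0|_0 = j_1^*\alpha - j_0^*\alpha$, which is the assertion.

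The computation is entirely routine, and there is no genuine obstacle; the only things that need care are the sign in $d(dt\wedge\alpha_1)$, which fixes the sign convention for $\mathcal{I}$ so that the two sides match, and the verification that $\mathcal{I}$ preserves vertical compact support. Since this is the standard homotopy formula for fiber integration over an interval, in the write-up I would simply indicate the decomposition above and refer to \cite[I.7]{GHV} rather than reproduce the full calculation.
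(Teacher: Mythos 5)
Your proof is correct: the decomposition $\alpha=\alpha_0+dt\wedge\alpha_1$, the sign computation $d(dt\wedge\alpha_1)=-dt\wedge d_U\alpha_1$, the application of the fundamental theorem of calculus to $\int_0^1\partial_t\alpha_0\,dt$, and the remark that vertical compact support is preserved because $[0,1]$ is compact are all in order. The paper states this lemma without proof, labelling it a well-known fact (its fiber-integration conventions come from Greub--Halperin--Vanstone), so there is no argument to compare against; yours is precisely the standard homotopy-formula computation the authors are implicitly invoking.
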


\begin{lemma}\label{lem:projformula}
Let $p=\dim \F$ and $q=\codim \F$. For any compactly supported, relatively closed $p$-form $\eta$ on $(M,\F)$ and any $\sigma\in \Omega_{cv}^{q}(U,\F)$, we have
\begin{equation}\label{eq:projformula}
{u}_{*}(\eta\wedge \sigma)=i^*\eta\wedge {u}_{*}\sigma + d\theta
\end{equation}
for some $\theta \in \Omega_c(N)$.
\end{lemma}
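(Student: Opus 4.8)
The plan is to reduce $\eta$ to its pullback $u^{*}i^{*}\eta$ from $N$ by a fiberwise deformation retraction, then use the ordinary projection formula for fiber integration together with the fact that $\sigma$ is a \emph{closed} basic form of top transverse degree to dispose of the error terms. Concretely, I would use the normal exponential map with which $U$ was built to identify $U$ with a disk bundle in $\nu N$, and let $\Phi\colon U\times[0,1]\to U$ be the fiberwise radial contraction, so that $\Phi_{1}=\id_{U}$ and $\Phi_{0}=i\circ u$. Since $[0,1]$ is compact, $\mathcal I$ is defined on all forms and the homotopy lemma above applies to $\alpha=\Phi^{*}\eta$; using $\Phi_{1}^{*}\eta=\eta$ and $\Phi_{0}^{*}\eta=u^{*}i^{*}\eta$ it yields the identity of forms on $U$
\[
\eta - u^{*}i^{*}\eta = d\beta + \mathcal{I}(\Phi^{*}d\eta),\qquad \beta:=\mathcal{I}(\Phi^{*}\eta)\in\Omega^{p-1}(U).
\]

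Next I would wedge this with $\sigma$ and apply $u_{*}$, handling the three resulting terms separately. The term $u_{*}(u^{*}i^{*}\eta\wedge\sigma)=i^{*}\eta\wedge u_{*}\sigma$ is exactly the projection formula for fiber integration. For the second term, note that $\sigma$ is basic of degree $q=\codim\F$ and hence automatically $d$-closed (a basic form of degree $>q$ vanishes), so $d\beta\wedge\sigma=d(\beta\wedge\sigma)$; as $u_{*}$ commutes with $d$ on vertically compactly supported forms, $u_{*}(d\beta\wedge\sigma)=d\,u_{*}(\beta\wedge\sigma)=:d\theta$. The crux is that the third term $u_{*}(\mathcal{I}(\Phi^{*}d\eta)\wedge\sigma)$ vanishes, which I would prove by a transverse degree count. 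Let $\mathsf F^{k}\Omega$ denote the decreasing filtration of forms requiring at least $k$ transverse cotangent slots, so that $\mathsf F^{a}\wedge\mathsf F^{b}\subseteq\mathsf F^{a+b}$ and $\mathsf F^{q+1}\Omega(U)=0$. Relative closedness of $\eta$ says exactly $d\eta\in\mathsf F^{2}\Omega(M)$, and $\sigma$ basic means $\sigma\in\mathsf F^{q}\Omega(U)$. Since $\Phi$ is foliated it pulls $\mathsf F^{2}$ back into $\mathsf F^{2}$ for the product foliation on $U\times[0,1]$ (in which the $[0,1]$-direction is transverse), and integrating out that single transverse coordinate lowers transverse degree by one, so $\mathcal{I}(\Phi^{*}d\eta)\in\mathsf F^{1}\Omega(U)$; wedging with $\sigma\in\mathsf F^{q}$ lands in $\mathsf F^{q+1}\Omega(U)=0$. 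Combining the three contributions gives $u_{*}(\eta\wedge\sigma)=i^{*}\eta\wedge u_{*}\sigma+d\theta$.

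The main obstacle I anticipate is justifying that $\Phi$ is foliated, equivalently that the fiberwise radial scaling preserves the transverse filtration, which is what feeds $\Phi^{*}d\eta\in\mathsf F^{2}$. This is precisely where the construction of $U$ enters: the saturated tubular neighborhood produced by the normal exponential map of a bundle-like metric linearizes the foliation in the transverse directions along $N$, so that the scalings $\Phi_{t}$ map leaves into leaves and $\Phi$ carries leaves of the product foliation into leaves of $\F$. The only remaining point is support bookkeeping: vertical compact support of $\beta\wedge\sigma$ comes from $\sigma\in\Omega_{cv}$, while $u\circ\Phi_{t}=u$ forces $u(\supp\beta)\subseteq u(\supp\eta\cap U)$, which is compact because $\eta$ is; hence $\theta=u_{*}(\beta\wedge\sigma)\in\Omega_{c}(N)$, as required.
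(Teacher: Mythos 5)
Your proof is correct and follows the paper's argument essentially step for step: the same fiberwise contraction homotopy applied to $\Phi^*\eta$, the same three-term decomposition via the projection formula, and the same vanishing of the error term $u_*(\mathcal{I}(\Phi^* d\eta)\wedge\sigma)$ --- your transverse-filtration count $\mathsf{F}^1\wedge\mathsf{F}^q\subseteq\mathsf{F}^{q+1}=0$ is just a systematic repackaging of the paper's explicit computation of $\iota_{\partial/\partial t}\bigl(f^*d\eta\wedge\mathrm{pr}_1^*\sigma\bigr)$. The support bookkeeping at the end is, if anything, slightly more careful than the paper's.
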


\begin{proof}
By the projection formula for integration along fibers~\cite[Prop.~IX, I.7.13]{GHV}, we have
\begin{equation}\label{eq:int1}
{u}_{*} ( ({u}^{*} i^{*}\eta) \wedge \sigma) = i^{*}\eta \wedge {u}_{*} \sigma.
\end{equation}
Let $\zeta = \eta - {u}^{*}i^{*}\eta$. Then we have
\begin{equation}\label{eq:zeta1}
{u}_{*}(\eta\wedge \sigma) - i^*\eta\wedge {u}_{*}\sigma = {u}_{*} (\zeta \wedge \sigma).
\end{equation}

Let $f : U \times [0,1] \to U$ be a smooth fiberwise contraction of $U$ to $N$; $f$ is a smooth map such that, denoting $f_{t}(x)=f(x,t)$,
\begin{itemize}
\item $f_{1} = \id_{U}$, $f_{0} = {u}$ and
\item $f_{t}$ preserves $\F$ for each $t \in [0,1]$.
\end{itemize}
Here, by the last lemma, we have
\[
\zeta = \eta - {u}^{*}i^{*}\eta =f_{1}^{*}\eta - f_{0}^{*}\eta=j_{1}^{*}f^{*}\eta - j_{0}^{*}f^{*}\eta=d\mathcal{I} f^{*}\eta + \mathcal{I} d f^{*}\eta.
\]
By taking the wedge product with $\sigma$, we obtain
\[
\zeta \wedge \sigma = (d\mathcal{I} f^{*}\eta) \wedge \sigma + (\mathcal{I} d f^{*}\eta) \wedge \sigma.
\]
By the projection formula for $\mathcal{I}$, we have
\[
(\mathcal{I} d f^{*}\eta) \wedge \sigma = \mathcal{I}\big( (f^{*}d\eta) \wedge \pr_{1}^{*} \sigma \big),
\]
where $\pr_{1} : U \times [0,1] \to U$ is the first projection. Here,
\[
\iota_{\frac{\partial}{\partial t}} \big( (f^{*}d\eta) \wedge \pr_{1}^{*} \sigma \big)= \big( \iota_{\frac{\partial}{\partial t}} (f^{*}d\eta) \big) \wedge \pr_{1}^{*} \sigma = \big( f^{*} \iota_{f_{*}\frac{\partial}{\partial t}} d\eta \big) \wedge \pr_{1}^{*} \sigma
\]
Since $\eta$ is relatively closed and $f_{t}$ preserves $\F$, it follows that the right hand side of the last equation is zero, which implies that $(\mathcal{I} d f^{*}\eta) \wedge \sigma=0$. Hence we obtain
\[
\zeta \wedge \sigma = (d\mathcal{I} f^{*}\eta) \wedge \sigma = d (\mathcal{I} (f^{*}\eta) \wedge \sigma).
\]
With $\theta=u_*(\mathcal{I} (f^{*}\eta) \wedge \sigma)$, Equation \eqref{eq:projformula} follows from Equation \eqref{eq:zeta1}. We see that $\theta$ is compactly supported in the following way. If $\eta_x=0$ for a $x\in N$, then $f^*(\eta)$ is zero on every point in $f^{-1}(x)$. Therefore $\supp \theta$ is contained in the intersection of $N$, which is closed, and $\supp \eta$, which is compact. Therefore $\supp \theta$ is compact.
\end{proof}

As a consequence, the next proposition follows.
\begin{prop}\label{prop:fiberintegration}
For a compactly supported, relatively closed $p$-form $\eta$ on $(M,\F)$, we have
$$
\int_{(\F,\eta)}\sigma=\int_{(\F|N,i^*\eta)}{u}_{*}\sigma
$$
for any $\sigma\in \Omega^{q}_{cv}(U,\F)$.
\end{prop}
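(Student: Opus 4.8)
The plan is to obtain the proposition as an essentially immediate consequence of Lemma~\ref{lem:projformula}, the only extra ingredient being the Fubini property of integration along the fibers. First I would regard $\sigma\in\Omega^{q}_{cv}(U,\F)$ as a basic form on all of $M$ by extension by zero: since $\eta$ is compactly supported and $\sigma$ has compact vertical support, the product $\eta\wedge\sigma$ is compactly supported inside $U$, so it extends smoothly by zero and
\[
\int_{(\F,\eta)}\sigma=\int_{M}\eta\wedge\sigma=\int_{U}\eta\wedge\sigma .
\]
Because $\eta$ is a $p$-form and $\sigma$ a $q$-form, $\eta\wedge\sigma$ is a top-degree form on the $n$-dimensional manifold $U$, with compact support and compact vertical support.

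The key step is then the Fubini formula for fiber integration (e.g.\ \cite[I.7.12]{GHV}): after fixing orientations of $N$ and of the $r$-dimensional fibers of ${u}$ compatibly with the orientation of $M$, one has
\[
\int_{U}\eta\wedge\sigma=\int_{N}{u}_{*}(\eta\wedge\sigma),
\]
the integrand on the right being the top-degree form ${u}_{*}(\eta\wedge\sigma)\in\Omega^{n-r}(N)=\Omega^{\dim N}(N)$. Now I would substitute Lemma~\ref{lem:projformula}, which gives ${u}_{*}(\eta\wedge\sigma)=i^{*}\eta\wedge{u}_{*}\sigma+d\theta$ with $\theta\in\Omega_{c}(N)$. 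Integrating over $N$ and discarding the exact term by Stokes' theorem---legitimate because $\theta$ is compactly supported and the closed-leaf component $N$ is a boundaryless submanifold---yields
\[
\int_{N}{u}_{*}(\eta\wedge\sigma)=\int_{N}i^{*}\eta\wedge{u}_{*}\sigma=\int_{(\F|N,i^{*}\eta)}{u}_{*}\sigma ,
\]
which is the asserted equality.

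To close the argument I would verify that the right-hand transverse integration operator is well defined on ${u}_{*}\sigma$. The pullback $i^{*}\eta$ is a $p$-form on $N$ that is again relatively closed for $\F|N$, since $d(i^{*}\eta)=i^{*}d\eta$ and the leaves of $\F|N$ are leaves of $\F$; moreover ${u}_{*}\sigma$ is a basic form on $(N,\F|N)$ of degree $q-r=\codim(\F|N)$, because ${u}_{*}$ is a chain map of basic complexes lowering degree by the fiber dimension $r$. Thus the degrees match and $\int_{(\F|N,i^{*}\eta)}$ indeed applies to ${u}_{*}\sigma$. The substance of the proof lies entirely in Lemma~\ref{lem:projformula}; the one point requiring care here is the orientation bookkeeping in the Fubini step, where the fibers of ${u}$ must be oriented by the convention induced from the compatible orientations of $M$ and $N$, so that no spurious sign enters.
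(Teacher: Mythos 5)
Your proposal is correct and follows essentially the same route as the paper: both reduce the statement to the chain $\int_M\eta\wedge\sigma=\int_N u_*(\eta\wedge\sigma)=\int_N i^*\eta\wedge u_*\sigma$, using the Fubini property of fiber integration for the first equality and Lemma~\ref{lem:projformula} together with Stokes' theorem (applicable since $\theta$ is compactly supported) for the second. The additional remarks on degree and orientation bookkeeping are consistent with, though not spelled out in, the paper's argument.
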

\begin{proof}
Since $\supp (\eta \wedge \sigma)$ is contained in $U$, we have
$$
\int_{(\F,\eta)}\sigma\stackrel{\rm def}{=}\int_M\eta\wedge \sigma=\int_N{u}_{*}(\eta\wedge \sigma)=\int_N i^*\eta\wedge {u}_{*}\sigma\stackrel{\rm def}{=}\int_{(\F|N,i^*\eta)}{u}_{*}\sigma.
$$
for any $\sigma\in \Omega_{cv}^{q}(U,\F)$, where the third equation is due to the previous lemma.
\end{proof}

\begin{rem}
The last proposition corresponds to \cite[Lemma 5.4]{Toeben} with $c_i=1$, $N=C_i$ and the transverse integration operator defined in a different way.
\end{rem}

\subsection{Proof of the localization formula (Theorem~\ref{thm:loc})}
We will follow the original argument of \cite{AtiyahBott} adapted to our foliated setting. Let $(M,\F)$ be an oriented manifold with a transversely complete oriented Killing foliation with structural algebra $\mfa$. Let $C$ be the union of all closed leaves in $(M,\F)$. Here $C$ has only finitely many connected components $C_{1}$, $\ldots$, $C_{N}$ by the compactness of the space of leaf closures of $(M,\F)$. Since $(M,\F)$ is transversely complete, by using the normal exponential map of $C_{k}$, we can construct a foliated tubular neighborhood $\rho_{k} : (U_{k},\F) \to (C_{k},\F)$ of $C_{k}$ in $(M,\F)$ so that $U_{1}$, $\ldots$, $U_{N}$ are mutually disjoint. Let $r_{k}$ be the codimension of $C_{k}$ in $M$. According to \cite[Corollary 4.8]{Toeben}, the chain map $\rho_*: \bigoplus \Omega^{r_{k}+\bullet}_{cv}(U_{k},\F)\to \bigoplus \Omega^{\bullet}(C_{k},\F)$ induces an isomorphism in basic cohomology. The {\em basic Thom homomorphism} $i_{*} : H(C, \F) \to H(M,\F)$ is obtained by concatenating $(\rho_{*})^{-1} : H(C, \F) \to H_{cv}(U, \F)$ with the inclusion $H_{cv}(U,\F)\to H(M,\F)$. This map has an equivariant extension,
\[
i_{*} : H_\mfa(C,\F) \to  H_\mfa(M,\F)
\]
which is called the equivariant basic Thom homomorphism (see Definition~\ref{defn:eqThomhom}). The following proposition follows directly from Lemma~\ref{lem:A5} and Equation~\eqref{eq:thomeuler}.
\begin{prop}\label{prop:thom}
We have
\[
i^*i_*1=e_\mfa(\nu C,\F),
\]
where we understand $e_\mfa(\nu C,\F)\in H(C,\F)=\bigoplus H(C_{k},\F)$ as the direct sum of the basic Euler classes $e_\mfa(\nu C_{k},\F)$ of the normal bundles of $(C_{k},\F)$ (see Definition~\ref{defn:eqbasicEulerform}).
\end{prop}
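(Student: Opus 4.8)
The plan is to reproduce, in the equivariant basic setting, the classical self-intersection computation $i^{*}i_{*}1=e(\nu C)$, whose content is that the restriction of the Thom class to the zero section is the Euler class. First I would unwind the definition of the equivariant basic Thom homomorphism: by Definition~\ref{defn:eqThomhom}, $i_{*}1$ is the image under the inclusion $H_{cv,\mfa}(U,\F)\to H_{\mfa}(M,\F)$ of the class $\Phi:=(\rho_{*})^{-1}(1)$, where $\rho\colon(U,\F)\to(C,\F)$ is the foliated tubular neighborhood and $\rho_{*}$ is fiber integration. Since $\rho_{*}(\rho^{*}\alpha\wedge\Phi)=\alpha\wedge\rho_{*}\Phi$ and $\rho_{*}\Phi=1$, this $\Phi$ is the equivariant basic Thom class, a vertically compactly supported $d_{\mfa}$-closed equivariant basic form whose fiber integral is $1$.

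Next I would factor the inclusion $i\colon C\hookrightarrow M$ as the zero section $s\colon C\to U$ followed by the open inclusion $j\colon U\hookrightarrow M$, so that $i^{*}=s^{*}\circ j^{*}$ on equivariant basic cohomology. Restricting a form on $M$ supported in $U$ back to $U$ recovers the original form, hence the composite of $j^{*}$ with the inclusion $H_{cv,\mfa}(U,\F)\to H_{\mfa}(M,\F)$ is the identity. Therefore
\[
i^{*}i_{*}1=s^{*}j^{*}\bigl((\mathrm{incl})\,\Phi\bigr)=s^{*}\Phi .
\]

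Finally, the pullback of the equivariant basic Thom class along the zero section is by definition the equivariant basic Euler form of $\nu C$, which is exactly Equation~\eqref{eq:thomeuler}, while Lemma~\ref{lem:A5} provides the equivariant refinement ensuring that $(\rho_{*})^{-1}$ and the zero-section restriction are compatible with the Cartan differential $d_{\mfa}$ and return the equivariant rather than the ordinary Euler form. Applying this to each connected component and summing gives $i^{*}i_{*}1=e_{\mfa}(\nu C,\F)=\bigoplus_{k}e_{\mfa}(\nu C_{k},\F)$. I expect the main obstacle not to be the formal manipulation above, which is the standard Atiyah--Bott argument, but the verifications packaged into Lemma~\ref{lem:A5} and Equation~\eqref{eq:thomeuler}: that fiber integration is a chain map for $d_{\mfa}$, that the Thom isomorphism survives passage to the Cartan complex of basic forms, and that the zero-section restriction of the equivariant Thom form genuinely computes $e_{\mfa}$.
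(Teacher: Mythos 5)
Your proposal is correct and follows essentially the same route as the paper: the paper's proof is precisely to unwind Definition~\ref{defn:eqThomhom} so that $i_*1$ is represented by the equivariant basic Thom form $\Phi_\mfa$ (characterized via Lemma~\ref{lem:A5} and $p_*\Phi_\mfa=1$), and then to restrict along the zero section using Equation~\eqref{eq:thomeuler}. The only cosmetic difference is that you recover $\Phi_\mfa$ as $(\rho_*)^{-1}(1)$ and then identify it with the Mathai--Quillen Thom class, whereas the paper starts from the explicit Thom form; the substance is identical.
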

The transverse integration operators for a transversely complete orientable Killing foliation $\F$ on an oriented manifold $M$ with respect to a compactly supported relatively closed form $\eta$ extend equivariantly to $\int_{(\F,\eta)}:H_\mfa(M,\F)\to S(\mfa^*)$ (see \cite[Section 7.2]{Toeben} for details).

The equivariant basic Euler class $e_\mfa(\nu C,\F)$ is not a zero-divisor in $H_\mfa(C,\F)$; indeed, its restriction to a leaf of $C$ is the product of weights of the $\mfa$-action as in the classical case of manifolds with Lie group actions (see Proposition \ref{prop:Eulerclassweights}). There is a basic version of the classical Borel localization theorem for the transverse $\mfa$-action of Killing foliations, in which the role of the fixed point set of a torus action is played by the closed leaves of $\F$, see \cite[Theorem 5.2]{GT2010}. It states that the restriction map $i^*:\widehat H_\mfa(M,\F)\to \widehat H_\mfa(C,\F)$ is an isomorphism on the level of localized modules; recall that $\widehat H_\mfa(M,\F)=Q(\mfa^*)\otimes_{S(\mfa^*)} H_\mfa(M,\F)$, where $Q(\mfa^*)$ is the field of fractions of $S(\mfa^*)$. Therefore, by Proposition~\ref{prop:thom}, $i_* : \widehat H_\mfa(C,\F) \to  \widehat H_\mfa(M,\F)$ is an isomorphism with inverse
$$
S=\sum_{k}\frac{i^*_{C_{k}}}{e_\mfa(\nu C_{k},\F)}.
$$
Thus, for any $\sigma \in H_\mfa(M,\F)$,
we have
\begin{equation}\label{eq:sigma1}
\sigma=i_*S\sigma=\sum_{k}\frac{i_*^{C_{k}}i^*_{C_{k}}\sigma}{e_\mfa(\nu C_{k},\F)}.
\end{equation}
after localization. By localizing $\int_{(\F,\eta)}:H_\mfa(M,\F)\to S(\mfa^*)$, one obtains a map $\int_{(\F,\eta)}:\widehat{H}_\mfa(M,\F)\to Q(\mfa^*)$.
Applying this map to both sides of \eqref{eq:sigma1} and then using Proposition \ref{prop:fiberintegration} for the right side on the the level of equivariant basic cohomology proves Theorem \ref{thm:loc}.

\section{Localization on K-contact manifolds} \label{sec:K-contact}

\subsection{K-contact manifolds}\label{sec:K-contact1}

Let $M$ be a $(2n+1)$-dimensional compact manifold with a contact $1$-form $\eta$. The conditions $\eta(\xi)=1$ and $\iota_\xi d\eta=0$ determine the \emph{Reeb vector field} $\xi$. The two-form $d\eta$ gives the contact structure $\caD:=\ker \eta$ the structure of a symplectic vector bundle. We assume that we are given an almost complex structure $J$ on $\caD$ which is compatible with $d\eta$ in the sense that $d\eta(JX,JY)=d\eta(X,Y)$ for all $X, Y$ and $d\eta(X,JX)>0$ for all $X\neq 0$, and extend $J$ to an endomorphism of $TM$ by setting $J(\xi)=0$. Then, we equip $M$ with the associated Riemannian metric
\begin{equation}\label{eq:associatedmetric}
g = \frac12 d\eta\circ (1\otimes J)+\eta\otimes \eta.
\end{equation}
The tuple $(\xi,\eta,J,g)$ is called a \emph{contact metric structure} (see \cite[Definition 6.4.4]{BoyerGalicki}).
Let $\F$ be the one-dimensional foliation on $M$ defined by the Reeb vector field $\xi$. We are interested in the case when $\F$ is a Riemannian foliation. By \cite[Proposition 6.4.8]{BoyerGalicki}, this condition is equivalent to $M$ being a K-contact manifold:
\begin{defn}
The contact metric structure $(\xi,\eta,J,g)$ on $M$ is called \emph{K-contact} if $\xi$ is a Killing vector field with respect to the Riemannian metric $g$. In this case, $M$ is called a \emph{K-contact manifold}.
\end{defn}

Yamazaki~\cite[Proposition~2.1]{Yamazaki} showed that a contact manifold admits a K-contact structure if the Reeb flow is a Riemannian foliation. In the rest of Section \ref{sec:K-contact} we will consider only compact $K$-contact manifolds.

The most important examples of K-contact manifolds are Sasakian manifolds:
\begin{defn}
If the contact metric structure $(\xi,\eta,J,g)$ on $M$ is K-contact and the CR structure $(\caD,J)$ is integrable, then $M$ is called a \emph{Sasakian manifold}.
\end{defn}

\subsection{Volume of K-contact manifolds} \label{subsec:volume}
As the top form $\eta\wedge (d\eta)^n$ is nowhere vanishing, we can use it to fix an orientation on $M$. Then the Riemannian volume form on $M$ of the metric in \eqref{eq:associatedmetric} is
\begin{equation}\label{eq:kcontvol}
\eta \wedge \frac{(d\eta)^n}{2^nn!}.
\end{equation}
\begin{rem}\label{rem:convention}
Note that \cite{BoyerGalicki} and \cite{MSY} use slightly different conventions. For example, the transverse metric is defined to be $d\eta$ instead of $\frac12 d\eta$ in \cite[Eq.~6.4.2]{BoyerGalicki}. Consequently, the Sasakian volume form in \cite{BoyerGalicki} reads $\eta \wedge \frac{(d\eta)^n}{n!}$. Here we follow the convention of \cite{MSY}.
\end{rem}
Then we observe the following:
\begin{lemma}\label{lem:vol}
The differential form $d\eta$ is basic with respect to the characteristic foliation $\F$, i.e., $\eta$ is relatively closed on $(M,\F)$. Thus the volume of a compact K-contact manifold $(M,\xi,J,g)$ is given by
\[
\Vol(M,g)=\frac{1}{2^nn!}\int_M \eta\wedge (d\eta)^n = \frac{1}{2^nn!} \int_{(\F,\eta)} (d\eta)^n.
\]
\end{lemma}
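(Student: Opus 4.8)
The plan is to verify the two assertions of the lemma in turn---first that $d\eta$ is basic (equivalently, that $\eta$ is relatively closed), and then to read off the volume formula from the volume form already identified in \eqref{eq:kcontvol}.

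First I would establish that $d\eta$ is basic with respect to $\F$. Since $\F$ is the one-dimensional orbit foliation of the Reeb flow and $\xi$ is nowhere vanishing, $\Xi(\F)$ is spanned over $C^{\infty}(M)$ by $\xi$; thus it suffices to check the two defining conditions for the single field $\xi$, because for $f\xi$ with $f\in C^{\infty}(M)$ one has $\iota_{f\xi}=f\iota_{\xi}$ and $L_{f\xi}=fL_{\xi}+df\wedge\iota_{\xi}$, so both conditions propagate. Now $\iota_{\xi}(d\eta)=0$ is precisely the defining property of the Reeb vector field, and Cartan's formula gives $L_{\xi}(d\eta)=d(\iota_{\xi}d\eta)+\iota_{\xi}(dd\eta)$, which vanishes because $\iota_{\xi}d\eta=0$ and $d^{2}=0$. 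Hence $d\eta\in\Omega^{2}(M,\F)$, and since a product of basic forms is basic, $(d\eta)^{n}\in\Omega^{2n}(M,\F)=\Omega^{q}(M,\F)$ with $q=2n=\codim\F$.

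Next I would observe that, $\F$ being one-dimensional ($p=\dim\F=1$), the relatively closed condition for the $1$-form $\eta$ reads $d\eta(\xi,v)=0$ for every vector $v$, i.e.\ $\iota_{\xi}(d\eta)=0$; this is again just the Reeb condition, so $\eta$ is relatively closed. As $M$ is compact, $\eta$ is in particular a compactly supported relatively closed $p$-form, so the transverse integration operator $\int_{(\F,\eta)}$ of Definition~\ref{defn:trint} is defined on $\Omega^{q}(M,\F)$.

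Finally, combining these observations with \eqref{eq:kcontvol}, I would compute directly
\[
\Vol(M,g)=\int_{M}\eta\wedge\frac{(d\eta)^{n}}{2^{n}n!}=\frac{1}{2^{n}n!}\int_{M}\eta\wedge(d\eta)^{n}=\frac{1}{2^{n}n!}\int_{(\F,\eta)}(d\eta)^{n},
\]
where the last equality is the defining formula \eqref{eq:transverseintegration} of $\int_{(\F,\eta)}$ applied to the basic form $\sigma=(d\eta)^{n}\in\Omega^{q}(M,\F)$. There is no serious obstacle in this argument: the only point requiring a little care is to confirm that $d\eta$ is genuinely basic rather than merely horizontal, which is exactly why I isolate the verification of $L_{\xi}(d\eta)=0$ (automatic here since $d\eta$ is closed); the rest is a direct substitution into the already-established expression for the Riemannian volume form.
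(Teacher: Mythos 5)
Your proof is correct and is exactly the verification the authors intend: the paper states Lemma~\ref{lem:vol} as an observation without proof, and the standard argument is precisely yours --- $\iota_{\xi}d\eta=0$ is the Reeb condition, $L_{\xi}d\eta=0$ follows from Cartan's formula since $d\eta$ is closed, and the volume identity is then immediate from \eqref{eq:kcontvol} and the definition \eqref{eq:transverseintegration} of $\int_{(\F,\eta)}$. Your extra care in propagating the conditions from $\xi$ to all of $\Xi(\F)$ and in noting that relative closedness of $\eta$ reduces to $\iota_{\xi}d\eta=0$ when $\dim\F=1$ is accurate and welcome.
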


\subsection{Deformation of Reeb vector fields}\label{sec:deformaiton}

It is well-known that one can use symmetries of $K$-contact manifolds to deform a given $K$-contact structure to new ones, with the same CR structure, but different Reeb vector field. Let $G$ be a Lie group with Lie algebra $\mfg$. Consider a compact K-contact manifold $(M,\xi,\eta,J,g)$ with a $G$-action which leaves the K-contact structure invariant, and define
\[
\cS :=\{X\in \mfg\mid \eta(X^\#)>0\},
\]
where $X^{\#}$ is the fundamental vector field associated to $X$. Then, for any $X \in \cS$, we have a K-contact structure such that
\begin{itemize}
\item the CR structure is equal to $(J,\ker \eta)$.
\item the Reeb vector field is equal to $X^{\#}$.
\item the contact form is equal to $\frac{\eta}{\eta(X^\#)}$.
\end{itemize}
The metric is determined by the formula~\eqref{eq:associatedmetric}.
\begin{rem}
As the CR structure does not change during the deformation, the deformed structure is Sasakian if and only if the original one was Sasakian.
\end{rem}
This construction for Sasakian manifolds goes back to Takahashi \cite{Takahashi} in the case where $G$ is a torus. It is now known as a \emph{deformation of type I} (see \cite[Section 8.2.3]{BoyerGalicki}). For K-contact manifolds, this type of construction was considered in \cite[Proposition~1]{BanyagaRukimbira} and \cite[Lemma~2.5]{Nozawa2} (see also~\cite[Lemma~2.7]{GNT}).

Since the contact form is determined by the contact structure and the Reeb vector field:
\begin{prop}
We have a map
\[
\cS \longrightarrow \RR_{>0}
\]
sending $X\in \mfg$ to the volume of the deformed $K$-contact structure with Reeb vector field $X^\#$.
\end{prop}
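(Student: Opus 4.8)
The plan is to read the proposition as a bookkeeping statement: the deformation construction above already produces, for each $X\in\cS$, a genuine $K$-contact structure $(X^{\#},\eta_X,J,g_X)$ with $\eta_X=\eta/\eta(X^{\#})$, so all that remains is to check that the volume $\Vol(M,g_X)$ depends on nothing but $X$ and is strictly positive. I would therefore prove two things: that the deformed data are uniquely determined by $X$, and that the resulting volume lies in $\RR_{>0}$.

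First I would justify the uniqueness indicated in the sentence preceding the statement. The deformed contact structure is $\ker\eta_X=\ker\eta$, and the prescribed Reeb field is $X^{\#}$. Any contact form with kernel $\ker\eta$ is a nowhere-vanishing multiple $f\eta$, and the Reeb normalization $f\eta(X^{\#})=1$ forces $f=1/\eta(X^{\#})$. Since $X\in\cS$ means $\eta(X^{\#})>0$ everywhere on $M$, this $f$ is globally defined, smooth and positive, so $\eta_X$ is pinned down by $X$; as $J$ is left unchanged, the metric $g_X$ obtained from the formula~\eqref{eq:associatedmetric} is likewise determined by $X$ alone. This makes the assignment $X\mapsto\Vol(M,g_X)$ well-defined.

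To read off the value and its sign I would apply Lemma~\ref{lem:vol} to the deformed structure, obtaining $\Vol(M,g_X)=\frac{1}{2^n n!}\int_M\eta_X\wedge(d\eta_X)^n$. Because $\eta_X$ is a contact form, $\eta_X\wedge(d\eta_X)^n$ is nowhere vanishing; orienting the compact manifold $M$ by this top form makes the integral strictly positive, so $\Vol(M,g_X)\in\RR_{>0}$. There is no real obstacle here, the proposition being a direct consequence of the already-verified validity of the deformation together with Lemma~\ref{lem:vol}. The only point deserving care is the well-definedness, namely that a contact structure together with a choice of Reeb vector field determines the contact form uniquely; this is exactly the normalization computation above, and the hypothesis $X\in\cS$ is precisely what guarantees that the rescaling factor $1/\eta(X^{\#})$ is globally smooth.
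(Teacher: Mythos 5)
Your argument is correct and is exactly the paper's (unwritten) justification: the proposition is stated immediately after the remark ``Since the contact form is determined by the contact structure and the Reeb vector field,'' and your normalization computation $f=1/\eta(X^{\#})$ together with the positivity of $\eta_X\wedge(d\eta_X)^n$ is precisely the content being invoked. No gap; same approach, merely written out in full.
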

This map has been investigated by Martelli-Sparks-Yau~\cite{MSY} in order to find Sasaki-Einstein metrics on a given manifold.

Note that for any $X\in \cS$ we have $[X^{\#},\xi] = 0$. The Killing vector fields $X^{\#}$ and $\xi$ thus arise as fundamental vector fields of an action of a torus leaving invariant the $K$-contact structure. We will hence restrict our attention to the case when $G$ is a torus.

\begin{rem}
For a compact K-contact manifold the closure $T$ of the Reeb flow in the isometry group is a torus.  If the K-contact manifold admits a Reeb orbit which is not closed, then $\dim T \geq 2$. Hence, the Reeb vector field of such K-contact manifolds can be deformed in the way described in this section.
\end{rem}

\subsection{The canonical transverse action on a K-contact manifold}\label{sec:tractiononsas}

Let $M$ be a compact K-contact manifold. Since the Reeb flow of $\eta$ preserves $g$, by~\cite[Th\'{e}or\`{e}me~A]{MolSer}, the orbit foliation $\F$ of the Reeb flow is Killing. The structural Killing algebra $\mfa$ is easily identified as mentioned in \cite[Example~4.3]{GT2010}: Consider the closure $T$ of the flow of the Reeb vector field $\xi$ inside the isometry group $\operatorname{Isom}(M,g)$. As a connected Abelian compact Lie group, it is isomorphic to a torus. Let $b\in \mft$ be the element corresponding to the Reeb vector field, i.e., for which $b^\#=\xi$. Then, the structural Killing algebra of $\F$ is isomorphic to the Lie algebra $\mft/\RR b$, where $\mft$ is the Lie algebra of $T$, which naturally admits an isometric transverse action
\begin{equation}\label{eq:traction}
\mft/\RR b \longrightarrow l(M,\F)
\end{equation}
on $(M,\F)$. Below, we will identify $\mfa=\mft/\RR b$.

We will use our ABBV type formula (Theorem \ref{thm:loc}) to calculate the volume of the K-contact manifold $M$. To apply the theorem directly, those cases are most relevant for us in which not all leaves of the Riemannian foliation $\F$ are closed, i.e., irregular K-contact structures.

\subsection{Equivariant extension of $d\eta$}\label{sec:extension}
Let $M$ be a compact K-contact manifold. In order to apply Theorem \ref{thm:loc}, we need to extend the basic cohomology class $[(d\eta)^n]\in H^{2n}(M,\F)$ to an equivariant basic cohomology class in $H^{2n}_\mfa(M,\F)$. We will do so by defining an explicit equivariant extension of the two-form $d\eta$ to a closed equivariant basic form. Choose an embedding $E:\mfa\to\mft$ such that its concatenation with $\mft\to \mft/\RR b=\mfa$ is the identity. Then we can define an equivariant basic form $\omega$ by
\begin{equation}\label{eq:extensionsasaki}
\omega(X)=d\eta-\eta(E(X)^{\#}),
\end{equation}
where $\omega \in \Omega_{\mfg}(M,\F)$ is regarded as a $\mfg$-equivariant polynomial map $\mfg\to \Omega(M,\F)$.

\begin{lemma}\label{lem:equivariantext}
$\omega$ is equivariantly closed and satisfies $\omega(0)=d\eta$; namely $[\omega]$ extends $[d\eta]$ to equivariant basic cohomology.
\end{lemma}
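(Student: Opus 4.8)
The plan is to verify directly that the equivariant basic form $\omega$ defined in \eqref{eq:extensionsasaki} is well-defined, closed under $d_{\mfa}$, and reduces to $d\eta$ at $X=0$. The last claim is immediate from the definition, since setting $X=0$ kills the degree-zero polynomial term $\eta(E(X)^{\#})$ and leaves $\omega(0)=d\eta$. So the substance of the lemma is the equivariant closedness.

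First I would check that $\omega$ genuinely lands in $\Omega_{\mfa}(M,\F)$, i.e.\ that for each $X$ the form $\omega(X)=d\eta-\eta(E(X)^{\#})$ is basic and that $X\mapsto\omega(X)$ is $\mfa$-equivariant. Basicness of $d\eta$ is exactly Lemma~\ref{lem:vol}; the scalar function $\eta(E(X)^{\#})$ is basic because the transverse action is isometric and $\eta$ is invariant under the torus $T$, so $\iota_{\xi}\,\eta(E(X)^{\#})=0$ and $L_{\xi}\,\eta(E(X)^{\#})=0$. Equivariance follows since the structural Killing algebra $\mfa$ is abelian and acts on $M$ through the torus $T$, which preserves both $\eta$ and the metric.

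The main computation is $d_{\mfa}\omega=0$. Recall $(d_{\mfa}\omega)(X)=d(\omega(X))-\iota_{X}(\omega(X))$, where $\iota_{X}$ denotes contraction by the transverse field $E(X)^{\#}$ (more precisely its class in $l(M,\F)$). Then I would compute
\begin{equation*}
(d_{\mfa}\omega)(X)=d(d\eta)-d\bigl(\eta(E(X)^{\#})\bigr)-\iota_{E(X)^{\#}}\bigl(d\eta-\eta(E(X)^{\#})\bigr).
\end{equation*}
The term $d(d\eta)$ vanishes, and $\iota_{E(X)^{\#}}$ applied to the degree-zero function $\eta(E(X)^{\#})$ is zero, so what remains is $-d\bigl(\eta(E(X)^{\#})\bigr)-\iota_{E(X)^{\#}}d\eta$. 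By Cartan's magic formula this equals $-L_{E(X)^{\#}}\eta$, which vanishes because $E(X)^{\#}$ is the fundamental field of an element of $\mft$ and the torus $T$ preserves the contact form $\eta$.

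The one point requiring care—and the likely main obstacle—is that the contractions and Lie derivatives above are taken in the transverse (basic) sense via $l(M,\F)$, while the identity $\iota_{E(X)^{\#}}d\eta=-d\eta(E(X)^{\#},-)$ and $L_{E(X)^{\#}}\eta=0$ are statements about honest vector fields on $M$. I would therefore lift the computation to $\Omega(M)$, observing that $E(X)^{\#}$ is a genuine foliated Killing field representing the transverse class, carry out the Cartan-formula manipulation there, and then project back to the basic complex; the compatibility of $\iota_{X}$ and $L_{X}$ with the projection $\Omega(M)\to\Omega(M,\F)$ is exactly the $l(M,\F)$-dga structure recalled after the remark on foliated charts. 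Once the invariance $L_{E(X)^{\#}}\eta=0$ is invoked, $d_{\mfa}\omega=0$ follows, completing the proof.
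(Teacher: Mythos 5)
Your proposal is correct and follows essentially the same computation as the paper: expand $(d_{\mfa}\omega)(X)$, note $d(d\eta)=0$ and that contraction kills the function term, and identify the remainder as $-L_{E(X)^{\#}}\eta=0$ via Cartan's formula and $T$-invariance of $\eta$. The extra care you take about well-definedness in the Cartan complex and the lift to $\Omega(M)$ is sound but goes beyond what the paper spells out.
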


\begin{proof}
Clearly we have $\omega(0)=d\eta$.
We see that $\omega$ is  equivariantly closed by the following calculation:
$$
(d_\mfa\omega)(X)=d(\omega(X))-\iota_{X^{\#}}\omega(X)=-d(\eta(E(X)^{\#}))-\iota_{X^{\#}}d\eta=-L_{E(X)^{\#}}\eta=0.
$$
\end{proof}
One can thus apply the Localization Theorem \ref{thm:loc} to  $\frac{1}{2^nn!}\omega^n$ in order to calculate the volume of $M$.

\subsection{Volume localization}\label{sec:volloc}
For simplicity, we restrict in this section to the case of a K-contact manifold with only finitely many closed Reeb orbits. Note that in this case, if $T$ is the torus given by the closure of the Reeb field, and ${\mathcal S}\subset \mft$ the set of admissible Reeb fields that can be obtained by a deformation of type I, then a dense subset of vector fields in ${\mathcal S}$ have only isolated closed flow lines. In the examples in Section \ref{sec:ex} we will apply the formula below to Reeb fields in this dense subset, and obtain by continuity the full volume map ${\mathcal S}\to \RR_{>0}$. Note that, since the isotropy action is symplectic at any point on $M$, its weights are well defined by using the decomposition into irreducible representations (see~\cite[Lemma 3.11]{Lerman}).

\begin{thm}\label{thm:volumelocalization} Let $M$ be a $(2n+1)$-dimensional compact K-contact manifold with only finitely many closed Reeb orbits $L_1$, $\ldots$, $L_N$. Denote the weights of the transverse isotropy $\mfa$-representation at $L_{k}$ by $\{\alpha^k_j\}_{j=1}^{n} \subset \mfa^*$ for $k=1$, $\ldots$, $N$. Then the volume of $M$ is given by
\begin{equation}\label{eq:volumelocalization}
\Vol(M,g) = \frac{\pi^n}{n!} \sum_{k=1}^N l_k\cdot \frac{\eta|_{L_k}(v^{\#})^n}{\prod_j \alpha^k_j(v+\RR b)},
\end{equation}
where $l_k=\int_{L_k} \eta$ is the length of the closed Reeb orbit $L_k$, and the fractions on the right hand side are considered as rational functions in the variable $v\in \mft$. In particular, the right hand side of \eqref{eq:volumelocalization} is independent of $v$.
\end{thm}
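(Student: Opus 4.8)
The plan is to feed the equivariant extension $\omega$ of $d\eta$ from \eqref{eq:extensionsasaki} into the localization formula for isolated closed leaves, Corollary~\ref{cor:isolatedclosedleaves}. First I would note that for a compact $K$-contact manifold the Reeb foliation $\F$ is orientable and Killing (Section~\ref{sec:tractiononsas}), and on a compact manifold it is automatically transversely complete with compact space of leaf closures, so that Corollary~\ref{cor:isolatedclosedleaves} applies with $q=\codim\F=2n$ and with $\mfa=\mft/\RR b$ the structural Killing algebra. Fixing a section $E$ as in Section~\ref{sec:extension}, the class to localize is $\frac{1}{2^n n!}[\omega^n]\in H^{2n}_\mfa(M,\F)$, which by Lemma~\ref{lem:equivariantext} extends $\frac{1}{2^n n!}[(d\eta)^n]$.

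The first key point is that the left-hand side of Corollary~\ref{cor:isolatedclosedleaves} is already the volume. Since $\omega(X)=d\eta-\eta(E(X)^{\#})$ has $d\eta$ as its only piece of form-degree two, the form-degree-$2n$ part of $\omega^n$ equals $(d\eta)^n$, independently of $X$; as only the top form-degree part of $\eta\wedge\omega^n$ survives integration over $M$, the equivariant transverse integral $\int_{(\F,\eta)}\frac{1}{2^n n!}\omega^n$ is the constant $\frac{1}{2^n n!}\int_M\eta\wedge(d\eta)^n=\Vol(M,g)$ by Lemma~\ref{lem:vol}.

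The second key step is to identify the restriction $i^*_{L_k}\omega$. As $L_k$ is one-dimensional, $i^*_{L_k}d\eta=0$ for degree reasons, so $i^*_{L_k}\omega(X)=-\eta(E(X)^{\#})|_{L_k}$. Here $\eta(E(X)^{\#})$ is a basic function on $(M,\F)$: it is annihilated by $\iota_\xi$ trivially and by $L_\xi$ because $L_\xi\eta=0$ (from $\iota_\xi d\eta=0$, $\eta(\xi)=1$) and $[\xi,E(X)^{\#}]=0$ (both are fundamental fields of the abelian $T$). Hence it is constant along the closed leaf $L_k$, and writing $v=E(X)$ this constant is $\eta|_{L_k}(v^{\#})$, a linear form on $\mfa$. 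Thus $i^*_{L_k}\omega^n=(-1)^n(\eta|_{L_k}(v^{\#}))^n$, and substituting into Corollary~\ref{cor:isolatedclosedleaves} the signs and powers of two collapse, $(-2\pi)^n\cdot\frac{(-1)^n}{2^n n!}=\frac{\pi^n}{n!}$, giving \eqref{eq:volumelocalization} as an identity of rational functions in $X\in\mfa$ once we read $\alpha^k_j(X)=\alpha^k_j(v+\RR b)$.

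For the independence of $v\in\mft$, I would observe that the identity just derived holds for the chosen section $E$, and that any $v\in\mft$ is of the form $E(\bar v)$ for a suitable linear section, since the image of a section together with $\RR b$ spans $\mft$. Evaluating the rational-function identity at the single point $X=\bar v=v+\RR b$ then yields \eqref{eq:volumelocalization} for that $v$, with left-hand side the fixed constant $\Vol(M,g)$; as $v$ was arbitrary, the right-hand side is independent of $v$. The only real work is the bookkeeping of form-degrees, signs and the factor $(-2\pi)^{q/2}$, together with the verification that $\eta(E(X)^{\#})$ is basic and hence constant on each $L_k$ so that $i^*_{L_k}\omega$ is a genuine linear form on $\mfa$; everything else is a direct substitution into the already-proven localization theorem.
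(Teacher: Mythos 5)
Your proposal is correct and follows essentially the same route as the paper: choose the section $E:\mfa\to\mft$ adapted to the given $v$, feed the equivariant extension $\omega$ of $d\eta$ from \eqref{eq:extensionsasaki} into Corollary~\ref{cor:isolatedclosedleaves}, observe that only the top form-degree part $(d\eta)^n$ survives on the left so the transverse integral is the constant $2^n n!\Vol(M,g)$, and that $i^*_{L_k}\omega^n$ reduces to $(-1)^n\eta|_{L_k}(v^{\#})^n$ because $d\eta$ vanishes on the one-dimensional closed orbits. The extra details you supply (basicness of $\eta(E(X)^{\#})$ and the sign bookkeeping $(-2\pi)^n\cdot(-1)^n/2^n n!=\pi^n/n!$) are consistent with the paper's computation.
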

\begin{proof}
Let $v\in \mft$, and define $Y=v+\RR b\in \mfa$. Then we can choose an embedding $E:\mfa\to \mft$ that sends $Y$ to $v$ and is right-inverse to the projection $\mft\to \mft/\RR b=\mfa$. Denote the corresponding equivariant extension of $d\eta$ defined in \eqref{eq:extensionsasaki} by $\omega$. In particular, $\omega(Y) = d\eta - \eta(v^{\#})$. By Corollary \ref{cor:isolatedclosedleaves} and Lemma~\ref{lem:equivariantext} we have
\[
\Vol(\xi) = \frac{1}{2^nn!}\int_{(\F,\eta)} \omega^n(Y) =
\frac{(-\pi)^n}{n!} \sum_{i=1}^N \int_{L_{k}}\eta\wedge \left(\frac{i^*_{L_{k}}\omega^n(Y)}{\prod_{j} \alpha^{k}_j(Y)}\right).
\]
As $L_{k}$ is one-dimensional, the nominator simplifies to $(-1)^n \left.\eta(v^{\#})^n\right|_{L_{k}}$.
\end{proof}

\subsection{K-contact Duistermaat-Heckman}

Before turning to the application of Theorem \ref{thm:volumelocalization} to concrete situations, we derive a K-contact version of the Duister\-maat-Heckman formula. Atiyah-Bott \cite{AtiyahBott} and Berline-Vergne \cite{BerlineVergne2} observed that the Duistermaat-Heckman formula \cite{DuistermaatHeckman} is an easy corollary of the localization formula. In this section, we proceed analogous to their argument.

Let $M$ be a compact K-contact manifold of dimension $2n+1$, with K-contact metric structure $(\xi,\eta,J,g)$ and denote the Riemannian volume form of $g$ as in \eqref{eq:kcontvol} by
\[
\beta=\frac{1}{2^n n!} \, \eta\wedge (d\eta)^n.
\]

\begin{thm} Let $M$ be a compact K-contact manifold such that the action of the closure $T$ of the Reeb flow admits only finitely many $S^{1}$-orbits $L_1$, $\ldots$, $L_N$.  Denote the weights of the transverse isotropy $\mfa$-representation at $L_{k}$ by $\{\alpha^{k}_j\}_{j=1}^{n} \subset \mfa^*$ for $k=1$, $\ldots$, $N$. Then we have for all $v\in \mft$
\[
\int_M e^{\eta(v^\#)} \beta = \pi^n\sum_{k=1}^N l_{k} \cdot \frac{e^{\eta(v^\#)}(L_{k})}{\prod_j \alpha^{k}_j(v+\RR b)},
\]
where $l_{k}=\int_{L_{k}}\eta$ is the length of the closed Reeb orbit $L_{k}$.
\end{thm}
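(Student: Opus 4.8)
The plan is to follow the Atiyah--Bott and Berline--Vergne recipe for deducing Duistermaat--Heckman from localization, exactly as announced in the text preceding the statement. The key observation is that the integrand $e^{\eta(v^\#)}\beta$ can be rewritten in terms of an equivariantly closed basic form, so that Theorem~\ref{thm:volumelocalization} (or rather its underlying mechanism, Corollary~\ref{cor:isolatedclosedleaves}) applies degree by degree. First I would fix $v\in\mft$, set $Y=v+\RR b\in\mfa$, choose the embedding $E:\mfa\to\mft$ sending $Y$ to $v$, and recall from \eqref{eq:extensionsasaki} the equivariant extension $\omega$ of $d\eta$ with $\omega(Y)=d\eta-\eta(v^\#)$. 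By Lemma~\ref{lem:equivariantext}, $\omega$ is equivariantly closed, hence so is any power $\omega^m$, and thus so is $e^{\omega}$ understood as the formal sum $\sum_{m\ge 0}\omega^m/m!$.

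The central computation is to identify $\int_M e^{\eta(v^\#)}\beta$ with a transverse integral of $e^{\omega}$. I would write
\begin{equation*}
e^{\eta(v^\#)}\beta=\frac{1}{2^n n!}\,\eta\wedge e^{\eta(v^\#)}(d\eta)^n,
\end{equation*}
and then note that since $L_k$ is one-dimensional the Reeb orbits force $\eta(v^\#)$ to behave as a scalar along the fibers; the point is that only the top transverse degree $2n$ contributes to the integral over $M$ after wedging with $\eta$. The natural move is to observe that $\omega(Y)^n=(d\eta-\eta(v^\#))^n$ expands by the binomial theorem, and that wedging with $\eta$ together with transverse integration kills all but the appropriate terms, so that $\int_{(\F,\eta)}e^{\omega}(Y)$ reproduces $\int_M e^{\eta(v^\#)}\beta$ up to the constant $2^n n!$. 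More cleanly, I would apply Corollary~\ref{cor:isolatedclosedleaves} directly to the equivariantly closed class $e^{\omega}\in H_\mfa(M,\F)$ (working in a completion to accommodate the infinite sum, or equivalently term by term in each homogeneous degree), obtaining
\begin{equation*}
\int_{(\F,\eta)}e^{\omega}(Y)=(-2\pi)^n\sum_k l_k\,\frac{i^*_{L_k}e^{\omega}(Y)}{\prod_j\alpha^k_j(Y)}.
\end{equation*}
On the right, $i^*_{L_k}\omega(Y)=-\eta|_{L_k}(v^\#)$ because $d\eta$ restricts to zero on a one-dimensional leaf, so $i^*_{L_k}e^{\omega}(Y)=e^{-\eta|_{L_k}(v^\#)}$, and after reconciling the sign $(-2\pi)^n$ against $\pi^n$ via the factor $\frac{1}{2^n n!}$ and the substitution $v\mapsto -v$ (or a careful sign bookkeeping as in the proof of Theorem~\ref{thm:volumelocalization}), the claimed formula follows.

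The main obstacle I anticipate is twofold. First, one must justify applying the localization formula, which is stated for a fixed class in $H_\mfa(M,\F)$, to the infinite series $e^{\omega}$; this requires either passing to a suitable completion $\prod_k H^k_\mfa(M,\F)$ in which $e^{\omega}$ lives and in which both sides converge as power series in $v$, or arguing degree by degree and resumming, and then checking that the resulting rational-function identity makes sense in $Q(\mfa^*)$. Second, and more delicately, the bookkeeping of constants and signs must be done with care: the factor $\frac{1}{2^n n!}$ from the volume form \eqref{eq:kcontvol}, the sign $(-2\pi)^n$ versus $(-\pi)^n$ coming from the weights and the one-dimensionality of the leaves, and the replacement of $(d\eta)^n$-type terms by $\eta(v^\#)^n$ on each $L_k$ all need to combine to yield exactly $\pi^n$ with the sign shown. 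I would model this reconciliation precisely on the proof of Theorem~\ref{thm:volumelocalization}, where the same constants already appear, so that the Duistermaat--Heckman statement is recovered by keeping the full exponential rather than isolating its top-degree part.
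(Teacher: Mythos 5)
Your proposal is correct and follows essentially the same route as the paper: the authors also take the equivariant extension $\omega$ of $d\eta$, apply the localization formula separately to each summand of the power series $e^\omega$ (rather than to $e^\omega$ as a single class), observe that wedging with $\eta$ and integrating over $M$ picks out exactly the $(d\eta)^n$-terms so that $\int_{(\F,\eta)}e^{\omega}(X)=2^n\int_M e^{\eta(E(X)^\#)}\beta$, and evaluate the restriction to each one-dimensional $L_k$; their only cosmetic difference is choosing $E(X)=-v$ from the outset instead of substituting $v\mapsto -v$ at the end. The one bookkeeping slip to fix is that the constant relating $\int_{(\F,\eta)}e^{\omega}$ to $\int_M e^{\pm\eta(v^\#)}\beta$ is $2^n$, not $2^n n!$ (the $n!$ is absorbed since $\tfrac{1}{s!}\binom{s}{n}=\tfrac{1}{n!(s-n)!}$ resums to $\tfrac{1}{n!}e^{\pm\eta(v^\#)}$), which is exactly the kind of reconciliation you already flagged.
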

\begin{proof} First, consider the case where $M$ admits only finitely many closed Reeb orbits $L_1$, $\ldots$, $L_N$. Consider the equivariant extension $\omega$ of $d\eta$ as in \eqref{eq:extensionsasaki}: $\omega(X)=d\eta - \eta(E(X)^\#)$; we choose $E:\mfa\to \mft$ such that $E(X)=-v$ for some $X\in \mfa$. We would like to apply Theorem \ref{thm:loc} to  $e^\omega$, but as this is not an equivariant basic differential form in our sense, we apply it separately to each summand in the power series:
\begin{multline*}
\int_M e^{\eta(v^\#)}\beta = \frac{1}{2^n} \sum_{s=0}^\infty  \int_M \eta\wedge \frac{\omega^s(X)}{s!} =\frac{1}{2^n} \sum_{s=0}^\infty  \int_{(\F,\eta)} \frac{\omega^s(X)}{s!} \\
= (-\pi)^n \sum_{s=0}^\infty \sum_{k=1}^N l_{k} \frac{(-\eta(E(X)^\#))^s(L_{k})}{s!\prod_j \alpha_j^{k}(X)} = \pi^n \sum_{k=1}^N l_{k} \frac{e^{\eta(v^\#)}(L_{k})}{\prod_j \alpha^{k}_j(v+\RR b)}
\end{multline*}
The general case follows from the first case like in the argument in the last paragraph of the proof of Theorem~\ref{thm:volumelocalization}.
\end{proof}

\section{Examples}\label{sec:ex}

\subsection{Deformation of standard Sasakian spheres}\label{subseq:Deformation}

We will compute the volume of deformations of the standard Sasakian structure on $S^{2n+1}$ (see \cite[Examples 7.1.5 and 7.1.12]{BoyerGalicki}). The Riemannian metric $g$ is the standard one, with sectional curvature $1$, inherited from the flat metric on $\RR^{2n+2}$, and the contact one-form is $\eta =  \sum_{i=0}^n (x_idy_i-y_idx_i)$. Let $\{2\pi e_i\}_{i=0,\ldots,n}$ be the standard basis of $\mft$, i.e. in particular each element is in the integral lattice; the fundamental vector field on $S^{2n+1}$ induced by $e_i$ is
\[
H_i = -y_i \frac{\partial}{\partial x_i} + x_i \frac{\partial}{\partial y_i}.
\]
Then the standard Reeb vector field is $\xi=\sum_i H_i=\sum_i e_i^\#$. For any vector $w=(w_0,\ldots,w_n)$ with $w_i>0$ we consider the deformed Reeb vector field $\xi_w=b_{w}^\#$, where $b_w = \sum_i w_i e_i\in \mft$. The contact form of the deformed Sasakian structure is given by
\[
\eta_w = \frac{\eta}{\sum_{i=0}^n w_i(x_i^2+y_i^2)}.
\]
For generic choice of $w$, the only closed leaves of $\F_w$ are those given by $|z_i|=1$. We denote these by $L_0$, $\ldots$, $L_n$. 

We will now prove the following consequence of Theorem \ref{eq:volumelocalization}, which are also consequences of results of Martelli-Sparks-Yau and Lawrence (see the next section for the statement of their results).

\begin{cor}\label{cor:voldeformedsphere}
We have
\begin{align}
\Vol(S^{2n+1},g_w) & = \frac{2\pi^{n+1}}{n!} \sum_{i=0}^{n} \frac{1}{w_i^{n+1}} \frac{\beta_i^n}{\prod_{j=0,\ldots,n,\, j\neq i} (\frac{\beta_i}{w_i} w_j-\beta_j)} \label{eq:volumes2n+1} \\
& = \frac{2\pi^{n+1}}{n!} \cdot \frac{1}{w_0\cdot\cdots \cdot w_n}. \label{eq:volumedeformedsphere}
\end{align}
\end{cor}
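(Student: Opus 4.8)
The plan is to apply the volume localization formula of Theorem~\ref{thm:volumelocalization} to the deformed K-contact structure with Reeb field $\xi_w = b_w^{\#}$ and contact form $\eta_w$, and then to collapse the resulting sum to a classical Lagrange interpolation identity. The closed Reeb orbits of $\F_w$ are the circles $L_0,\ldots,L_n$ with $L_i = \{|z_i|=1,\ z_j=0\text{ for }j\neq i\}$, so the formula produces a sum of $n+1$ terms. Writing the deformation variable as $v=\sum_{k=0}^n \beta_k e_k\in\mft$, so that $\beta=(\beta_0,\ldots,\beta_n)$ records the components of $v$ in the basis $\{e_k\}$, I would compute the three pieces of data entering the formula at each $L_i$.

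First, on $L_i$ one has $x_j=y_j=0$ for $j\neq i$ and $x_i^2+y_i^2=1$, so $\eta_w=\eta/(\sum_k w_k(x_k^2+y_k^2))$ restricts to $\frac{1}{w_i}\eta|_{L_i}$; parametrising $L_i$ by $z_i=e^{\sqrt{-1}\theta}$ gives $\eta|_{L_i}=d\theta$ and hence $l_i=\int_{L_i}\eta_w=2\pi/w_i$. Second, since $H_j|_{L_i}=0$ for $j\neq i$ and $\eta(H_i)=x_i^2+y_i^2=1$ on $L_i$, evaluating the contact form on $v^{\#}=\sum_k\beta_k H_k$ yields $\eta_w|_{L_i}(v^{\#})=\beta_i/w_i$. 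The delicate third piece is the transverse isotropy weights. The normal bundle $\nu\F_w|_{L_i}$ is $\bigoplus_{j\neq i}\CC_j$, on which the torus acts with $T$-weight $e_j^{*}\in\mft^{*}$ on the summand $\CC_j$. To obtain a weight $\alpha_j^i$ of the transverse $\mfa$-representation, $\mfa=\mft/\RR b_w$, I would represent the coset $v+\RR b_w$ by the vector $\tilde v = v - \frac{\beta_i}{w_i}b_w$, whose $i$-th component vanishes; then $\tilde v^{\#}$ vanishes along $L_i$ and acts on $\CC_j$ by rotation of speed $\beta_j-\frac{\beta_i}{w_i}w_j$, which identifies $\alpha_j^i(v+\RR b_w)=\frac{\beta_i}{w_i}w_j-\beta_j$ once the orientation of $\nu C_k$ (fixed compatibly with the ambient orientation $\eta\wedge(d\eta)^n$) is used to pin down the overall sign. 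Substituting $l_i$, $\eta_w|_{L_i}(v^{\#})^n=\beta_i^n/w_i^n$ and $\prod_{j\neq i}\alpha_j^i$ into Theorem~\ref{thm:volumelocalization} then gives exactly \eqref{eq:volumes2n+1}.

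For the second equality \eqref{eq:volumedeformedsphere}, I would change variables $t_i:=\beta_i/w_i$. Using $\frac{\beta_i}{w_i}w_j-\beta_j=w_j(t_i-t_j)$ and $\beta_i^n=w_i^n t_i^n$, the $i$-th summand collapses to $\frac{1}{w_0\cdots w_n}\cdot\frac{t_i^n}{\prod_{j\neq i}(t_i-t_j)}$, so that
\[
\sum_{i=0}^n \frac{1}{w_i^{n+1}}\frac{\beta_i^n}{\prod_{j\neq i}(\tfrac{\beta_i}{w_i}w_j-\beta_j)} = \frac{1}{w_0\cdots w_n}\sum_{i=0}^n \frac{t_i^n}{\prod_{j\neq i}(t_i-t_j)}.
\]
The remaining sum equals $1$ by Lagrange interpolation: interpolating the degree-$n$ polynomial $p(t)=t^n$ at the $n+1$ distinct nodes $t_0,\ldots,t_n$ is exact, and comparing the coefficients of $t^n$ on both sides of $t^n=\sum_i t_i^n\prod_{j\neq i}\frac{t-t_j}{t_i-t_j}$ forces $\sum_i t_i^n/\prod_{j\neq i}(t_i-t_j)=1$. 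This establishes \eqref{eq:volumedeformedsphere} and in particular exhibits the manifest independence of $v$ predicted by Theorem~\ref{thm:volumelocalization}.

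I expect the main obstacle to be the correct determination of the transverse weights $\alpha_j^i$ as elements of $\mfa^{*}=(\mft/\RR b_w)^{*}$ — specifically choosing the representative $\tilde v$ that vanishes on $L_i$ and tracking the sign convention — since everything downstream is an essentially mechanical computation once these weights are in hand, and the terminal identity $\sum_i t_i^n/\prod_{j\neq i}(t_i-t_j)=1$ is standard.
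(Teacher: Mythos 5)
Your proposal is correct and follows the paper's proof essentially step for step in the main part: the lengths $l_i=2\pi/w_i$, the numerators $\eta_w|_{L_i}(v^{\#})^n=(\beta_i/w_i)^n$ via the representative $v-\frac{\beta_i}{w_i}b_w\in\mft_{L_i}$, and the transverse weights $\alpha^i_j(v+\RR b_w)=\frac{\beta_i}{w_i}w_j-\beta_j$ are exactly what the paper computes (the paper pins the sign by observing that $[e_k^{\#},\cdot\,]$ has blocks $\bigl(\begin{smallmatrix}0&-\alpha_j(e_k)\\ \alpha_j(e_k)&0\end{smallmatrix}\bigr)$, so the weights are the negatives of the dual basis, matching your rotation-speed argument). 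The only divergence is the passage from \eqref{eq:volumes2n+1} to \eqref{eq:volumedeformedsphere}: the paper exploits the $v$-independence of the sum and sends $\beta_0\to+\infty$, which kills all summands but the $i=0$ one, whereas you substitute $t_i=\beta_i/w_i$ and invoke the Lagrange interpolation identity $\sum_i t_i^n/\prod_{j\neq i}(t_i-t_j)=1$; both are valid, and your closed-form identity is arguably cleaner since it avoids a limiting argument, while the paper's limit trick is the one it reuses later (e.g.\ for toric Sasakian manifolds and for $\SO(5)/\SO(3)$) where no such classical identity is at hand.
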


\begin{proof}
We need to calculate $\int_{L_i} \eta_w$, as well as the nominator and the denominator of the right hand side of \eqref{eq:volumelocalization}.

It is easy to see that $\int_{L_i} \eta_w=\frac{2\pi}{w_i}$. The weights $\{\alpha^i_j\}_{j=0, \ldots, \hat i, \ldots, n}$ of the transverse isotropy representation at $L_i$ (see Appendix \ref{sec:eqcharclasses}) are given by the negative of the dual basis of $\{e_k+\RR b_w\}_{k=0, \ldots, \hat i, \ldots n}$. Explicitly, $\alpha^i_j(e_k+\RR b_w) = -\delta_{jk}$ for $j, k\neq i$. This is because $[e_k^\#,\cdot\,]=[H_k,\cdot\,]$, with respect to $\big\{\frac{\partial}{\partial x_j},\frac{\partial}{\partial y_j}\big\}$, has block diagonal shape $\delta_{jk}\left(\begin{smallmatrix} 0 & 1 \\ -1 & 0 \end{smallmatrix}\right)= \left(\begin{smallmatrix} 0 & -\alpha_j(e_k+\RR b_w) \\ \alpha_j(e_k+\RR b_w) & 0 \end{smallmatrix}\right)$. Writing
\[
v=\sum_{i=0}^n \beta_j e_j,
\]
we calculate $v+\RR b_w = \sum_{j\neq i} \beta_j (e_j+\RR b_w) - \frac{\beta_i}{w_i} \sum_{k\neq i} w_k (e_k+\RR b_w)$.  Then the denominator is given by
\begin{equation*}
\prod_{j\neq i} \alpha^i_j(v+\RR b_w)=  \prod_{j\neq i} \left(\frac{\beta_i}{w_i} w_j-\beta_j\right).
\end{equation*}
Because the $\mft$-isotropy subalgebra at $L_i$ is spanned by $\{e_j\}_{j=0, \ldots, \hat i, \ldots, n}$, we have that $v-\frac{\beta_i}{w_i}b_w\in \mft_{L_i}$. The numerator is therefore given by
\[
\eta_w|_{L_i}(v^{\#})^n=\left(\frac{\beta_i}{w_i}\right)^n.
\]
Substituting these equations, \eqref{eq:volumes2n+1} follows from \eqref{eq:volumelocalization}.

As the right hand side of~\eqref{eq:volumes2n+1} is independent of $\beta_j$, we can send $\beta_0$ to $+\infty$ in order to simplify the expression. When doing so, the summands for $i\neq 0$ tend to zero, and the summand for $i=0$ tends to $\frac{1}{w_0\cdot\cdots\cdot w_n}$. Hence, we get~\eqref{eq:volumedeformedsphere}.
\end{proof}

\subsection{Toric Sasakian manifolds}
Let  $(M,\eta,g)$ be a Sasakian manifold of dimension $2n+1$, and denote by $T$ the torus obtained as the closure of the Reeb flow in $\Isom(M)$. By a result of Rukimbira~\cite[Corollary 1]{Rukimbira4}, one has $\dim T \leq n+1$. If $\dim T = n+1$ holds, one calls $(M,\eta,g)$ a {\em toric} Sasakian manifold.

We briefly describe the Delzant-type correspondence between toric Sasakian $(2n+1)$-manifolds and certain cones in a Euclidean space due to Lerman~\cite{Lerman} and Boyer-Galicki~\cite{BoyerGalicki2}. Letting $\mft=\Lie(T)$, the contact moment map is defined by
\[
\begin{array}{cccc}
\Phi : & M & \longrightarrow & \mft^{*} \\
         & x  & \longmapsto     & (X \mapsto \eta(X^{\#})(x) )
\end{array}
\]
We define a cone in $\mft^*$ by $\Delta=\RR_{\geq 0}\cdot \Phi(M)$, and put $\Delta_t = t\cdot \Phi(M)$. Note that $\Phi(M)=\Delta_1 = H\cap \Delta$, where $H = \{ \varphi \in \mft^{*} \mid \varphi(b)= 1 \}$. Denote the element of $\mft$ whose fundamental vector field is the Reeb field by $b\in \mft$. Lerman showed in \cite[Theorem 2.18]{Lerman} that $\Delta$ is a good rational polyhedral cone (because $M$ is a contact toric manifold of Reeb type), i.e., there exists a minimal set of primitive vectors $\{v_{i}\}_{i \in I} \subset \mft_{\ZZ}$ in the integral lattice $\mft_{\ZZ} = \ker (\exp : \mft \to T)$ such that
\begin{enumerate}
\item $\Delta = \{ \varphi \in \mft^{*} \mid \varphi(v_{i}) \leq 0 \}$, and
\item for any face of $\Delta$ of the form $\Delta\cap \bigcap_{j=1}^k \{\varphi\in \mft^*\mid \varphi(v_{i_j})=0\}\neq \{0\}$, we have
\[
\left(\bigoplus_{j=1}^k \RR \cdot v_{i_j}\right) \cap \mft_\ZZ = \bigoplus_{j=1}^k \ZZ\cdot v_{i_j}.
\]
and $v_{i_1},\ldots,v_{i_k}$ are linearly independent over $\ZZ$.
\end{enumerate}
In particular, for any closed Reeb orbit $L$, $\Phi(L)$ is a vertex of $\Delta_1$, and thus there exist precisely $n$ of the vectors $v_i$, denoted $v_1^L,\ldots,v_n^L$, such that $\Phi(L)(v_i^L)=0$. We fix a determinant $\det\in \wedge^n\mft^*$ such that a chosen integer basis of $\mft$ is sent to $1$ by the determinant. We then assume that the vectors $v_i^L$ are ordered in such a way that $\det(b,v_1^L,\ldots,v_n^L)> 0$.

Using our localization formula we can now prove:
\begin{thm}\label{thm:localizationvolumeSasakiantoric}
We have
\begin{multline*}
\Vol (M,\eta_{b}) = \frac{1}{2^nn!} \sum_{L} \frac{1}{\det (b, v_{1}^{L}, \ldots, v_{n}^{L})}\cdot \\ \frac{\det (v, v_{1}^{L}, \ldots, v_{n}^{L})^{n}}{\prod_{i=1}^{n}  \det (b, v_{1}^{L}, \ldots, v_{i-1}^L, v, v_{i+1}^L, \cdots, v_{n}^{L})}.
\end{multline*}
\end{thm}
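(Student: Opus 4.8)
The plan is to specialize the localization formula of Theorem~\ref{thm:volumelocalization} to the toric situation and to translate each of the three local ingredients appearing there --- the length $l_L=\int_L\eta$, the value $\eta|_L(v^{\#})$, and the weights $\alpha^L_j$ of the transverse isotropy $\mfa$-representation --- into determinantal quantities built from $b$, $v$ and the facet normals $v^L_1,\dots,v^L_n$. Since $M$ is toric, the closed Reeb orbits are exactly those $L$ whose image $\Phi(L)$ is a vertex of $\Delta_1$, and at such a vertex precisely $n$ of the defining normals, namely $v^L_1,\dots,v^L_n$, vanish; by goodness of the cone these together with $b$ form a basis of $\mft$. Accordingly I would carry out every computation by writing $v=c_0 b+\sum_j c_j v^L_j$ in this basis and reading off the coordinates $c_0,\dots,c_n$ via Cramer's rule.

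First I would treat the numerator. Because $\Phi(x)(X)=\eta(X^{\#})(x)$, the number $\eta|_L(v^{\#})$ equals $\Phi(L)(v)$, and the vertex $\Phi(L)$ is characterized by $\Phi(L)(b)=1$ (the Reeb normalization $\eta(\xi)=1$ with $\xi=b^{\#}$) together with $\Phi(L)(v^L_j)=0$ for all $j$. Thus $\Phi(L)$ is the basis vector dual to $b$, so $\eta|_L(v^{\#})=c_0=\det(v,v^L_1,\dots,v^L_n)/\det(b,v^L_1,\dots,v^L_n)$ by Cramer. For the weights, the isotropy $\mfa$-representation on the transverse space $\nu_x\F\cong\caD_x$ splits into $n$ complex lines, and under the toric Delzant--Lerman dictionary its weights are, up to the lattice normalization $2\pi$ and a sign, the images in $\mfa^*=(\mft/\RR b)^*$ of the basis of $\mft^*$ dual to $v^L_1,\dots,v^L_n$ (which annihilate $b$, hence descend to $\mfa^*$); therefore $\alpha^L_j(v+\RR b)=\pm 2\pi\,c_j=\pm 2\pi\,\det(b,v^L_1,\dots,v^L_{j-1},v,v^L_{j+1},\dots,v^L_n)/\det(b,v^L_1,\dots,v^L_n)$, again by Cramer.

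Next the length. The orbit $L$ is a circle on which $T$ acts through $T/T_L$, where $T_L$ is the subtorus whose Lie algebra $\mft_L$ is spanned by $v^L_1,\dots,v^L_n$, and $l_L=\int_L\eta$ is the period of the Reeb flow, computed from the induced lattice in $\mft/\mft_L\cong\RR$. Goodness of the cone guarantees that $\mft_L\cap\mft_\ZZ=\bigoplus_j\ZZ\, v^L_j$ is a primitive sublattice, so the linear functional $w\mapsto\det(w,v^L_1,\dots,v^L_n)$ identifies the quotient lattice with $\ZZ$ and sends $b$ to $\det(b,v^L_1,\dots,v^L_n)$; hence $l_L=1/\det(b,v^L_1,\dots,v^L_n)$, the ordering convention $\det(b,v^L_1,\dots,v^L_n)>0$ making this a genuine positive length.

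Substituting these three expressions into Theorem~\ref{thm:volumelocalization} and collecting constants --- the $n$ factors of $2\pi$ coming from the weights combine with the $\pi^n$ to give $\pi^n/(2\pi)^n=1/2^n$, while the various powers of $\det(b,v^L_1,\dots,v^L_n)$ from numerator, weights and length cancel against each other --- should produce exactly the claimed sum. I expect the delicate point to be the weight computation together with the attendant sign bookkeeping: one must pin down the transverse isotropy weights in terms of the facet normals with the correct sign and $2\pi$-normalization, and then verify that these signs, the sign of $\det(b,v^L_1,\dots,v^L_n)$ governing $l_L$, and the orientation conventions built into the localization formula all conspire so that the ordering convention $\det(b,v^L_1,\dots,v^L_n)>0$ yields the asserted positive, $v$-independent expression. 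A natural consistency check is the deformed sphere of Section~\ref{subseq:Deformation}, where each $v^L_j$ is a multiple of some $e_i$ and the resulting formula can be matched term by term against Corollary~\ref{cor:voldeformedsphere}.
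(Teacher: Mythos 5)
Your proposal is correct and follows essentially the same route as the paper: specialize Theorem~\ref{thm:volumelocalization} and express the length $l_L$, the numerator $\eta_b(v^{\#})|_L$ and the transverse weights in the basis $\{b, v_1^L, \ldots, v_n^L\}$ of $\mft$ via Cramer's rule, with the powers of $\det(b,v_1^L,\ldots,v_n^L)$ and the factors of $2\pi$ cancelling to give $1/2^n n!$. The one point you explicitly leave open --- the sign and $2\pi$-normalization of the weights --- is settled in the paper by appealing to Lerman's analysis of the symplectic slice representation at $L$ (its momentum cone is $\{\varphi\in\mft_L^*\mid \varphi(v_i^L)\le 0\}$ and goodness makes the $v_i^L$ an integer basis of $\mft_L\cap\mft_\ZZ$), which pins down $w_i^L(p_{\mfa}(v_j^L))=2\pi\delta_{ij}$ with a plus sign.
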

\begin{proof}
We first show~
\begin{equation}\label{eq:length}
\int_{L} \eta_{b} = \frac{1}{\det (b, v_{1}^{L}, \ldots, v_{n}^{L})}.
\end{equation}
Let $T_{L}$ be the isotropy group of the $\mft$-action at $L$ and $\mft_{L}$ its Lie algebra. We identify $L$ with $T/T_{L}$ and let $p_{L} : T \to T/T_{L}$ be the projection. We identify $\Lie(T/T_{L})$ with $\RR$ so that $\ker \exp$ is identified with $\ZZ$ and such that $b+\mft_L$ is sent to a positive number. Since $T_{L}$ is connected by \cite[Lemma 3.13]{Lerman}, the Reeb vector field $\xi_b$ on $T/T_{L}$ is $(p_{L})_{*}b$. By $\eta_{b}(\xi_b)=1$, we get
\begin{equation}
\int_{L} \eta_{b} = \frac{1}{(p_{L})_{*}b}.
\end{equation}
On the other hand, we now argue that $(p_{L})_{*}b = \det (b, v_{1}^{L}, \ldots, v_{n}^{L})$: for this we first observe that $\mft_L$ is spanned by the $v_i^L$ (see, e.g., \cite[Proof of Lemma 6.4]{Lerman}: there it is argued that if $F$ is the face of $\Delta$ containing $\Phi(p)$ in its interior, then the real span of $F$ is equal to the annihilator $\mft_x^\circ$). That shows that the linear forms $(p_L)_*$ and $\det(\cdot,v_1^L,\ldots,v_n^L)$ have the same kernel. To see that they are actually equal we choose, using the fact that $\Delta$ is a good rational polyhedral cone, a vector $v_0^L$ completing the $v_i^L$ to an integer basis of $\mft_\ZZ$, such that  $\det(v_0^L,v_1^L,\ldots,v_n^L)=1$. With the help of this basis we can identify $\mft$ with $\RR^{n+1}$ in such a way that $\mft_\ZZ$ is sent to $\ZZ^{n+1}$ and $\mft_L$ to $\RR^{n}$. Because $T_L$ is connected \cite[Lemma 3.13]{Lerman}, this implies that $(p_L)_*(v_0^L)=1$. This implies~\eqref{eq:length}.

Denoting by $p_{\mfa} : \mft \to \mfa$ the canonical projection, $\{p_{\mfa}(v_{i}^{L})\}_{i=1}^{n}$ is a basis of $\mfa$. We will show next that the transverse weights $w_i^L$ of the $\mfa$-action on $\nu L$ are given by $2\pi$ times the corresponding dual basis; i.e., $w_{i}^{L}\in \mfa^{*}$ so that $w_{i}^{L}(p_{\mfa}(v_{j}^{L})) = 2\pi\delta_{ij}$. Via the natural isomorphism $\mft_L^*\cong \mfa^*$ induced by $\mft_L\subset \mft\to \mfa$ the transverse weights are identified with the ordinary weights of the symplectic slice representation at $L$. In the proof of \cite[Lemma 6.4]{Lerman} it is shown that the momentum image of this symplectic slice representation is the cone given as the intersection of the half-spaces $\{\varphi\in \mft_L^*\mid \varphi(v_i^L)\leq 0\}$. Thus, the edges of this cone are spanned by the negative of the dual basis of the $v_i^L$. On the other hand it is known that (with our sign convention) the negatives of the weights of the symplectic slice r
 epresentation also span the edges of the cone. Because it is shown in the proof of \cite[Lemma 6.4]{Lerman} that the $v_i^L$ form a basis of the integer lattice (and hence also the dual basis), the weights are necessarily given by $2\pi$ times the corresponding dual basis.

In other words we have shown that
\begin{equation}\label{eq:toricweightsexplicitly}
w_{i}^{L}(v+\RR b) = \frac{2\pi\det (b, v_{1}^{L}, \ldots, v_{i-1}^{L}, v, v_{i+1}^L, \ldots, v_{n}^{L})}{\det (b, v_{1}^{L}, \ldots, v_{n}^{L})}
\end{equation}
for all $v\in \mft$, as can be easily checked by inserting $v=v_j^L$.

Inserting $b$ and $v_i^L$ on both sides of the following equality shows
\begin{equation}\label{eq:momtoric}
 \eta_{b}(v^{\#})|_{L} = \frac{\det (v, v_{1}^{L}, \ldots, v_{n}^{L})}{\det (b, v_{1}^{L}, \ldots, v_{n}^{L})}\;.
\end{equation}
By substituting~\eqref{eq:length},~\eqref{eq:toricweightsexplicitly} and~\eqref{eq:momtoric} into the localization formula \eqref{eq:volumelocalization}, we get
\begin{align*}
\Vol (M,\eta_{b}) &= \frac{\pi^n}{n!} \sum_{L} l_L \cdot \frac{\eta|_{L}(v^{\#})^n}{\prod_i w_{i}^{L}(v+\RR b)}\\
&=\frac{1}{2^nn!} \sum_{L} \frac{1}{\det (b, v_{1}^{L}, \ldots, v_{n}^{L})}\cdot \\ &\qquad\qquad\qquad\qquad \frac{\det (v, v_{1}^{L}, \ldots, v_{n}^{L})^{n}}{\prod_{i=1}^{n}  \det (b, v_{1}^{L}, \ldots, v_{i-1}^L, v, v_{i+1}^L, \ldots, v_{n}^{L})}\;.\label{eq:volumeetab}
\end{align*}
\end{proof}

\begin{rem}
We can eliminate the variable $v$ from the formula in Theorem~\eqref{thm:localizationvolumeSasakiantoric} by cutting $\Delta_{1}$ into a finite union of $n$-simplices and taking a limit of the term corresponding to each simplex in a similar way as in the case of spheres in the last section.
\end{rem}
\begin{rem} Of course Corollary \ref{cor:voldeformedsphere} is a special case of Theorem \ref{thm:localizationvolumeSasakiantoric}. For a toric deformation of the standard Sasakian sphere the cone $\Delta$ is, using the notation of Section \ref{subseq:Deformation}, spanned by the basis dual to the $e_i$. The primitive vectors $v_i$ are given by $v_i=-2\pi e_i$. For the closed Reeb orbit $L_j$ the vectors $v_i^{L_j}$ are precisely those $v_i$ with $i\neq j$. The simplest way to compute the right hand side of Theorem \ref{thm:localizationvolumeSasakiantoric} is then to expand the vector $v$ in the basis $e_i$, and send one  coefficient to infinity. Note that the apparently missing factors of $2\pi$, when comparing with Corollary \ref{cor:voldeformedsphere}, are accounted for by our choice of the determinant.
\end{rem}

In the remainder of this section we will show that this formula for the volume of a toric Sasakian manifold can also be proven differently, by combining the fact that the volume of $M$ is closely related to the volume of a truncated momentum cone, as shown by Martelli-Sparks-Yau, and a formula for the volume of a simple polytope due to Lawrence.

Let $C(M)\cong \RR^+\times M$ be the K\"ahler cone of $(M,g,\eta)$ with metric $g'=dr^2\oplus r^2 g$ and symplectic form $\omega$ with $g'(X,Y)=\omega(X,JY)$. For $\eta':=\frac{1}{r}\iota_{\partial/\partial r}\omega$ we have $d(\frac{1}{2}(r^2\eta'))=\omega$, because $L_{r\partial/\partial r}\omega=2\omega$ (as in \cite{MSY2}, Eqs.\ (2.3) and (2.4)), and $\eta'|_{r=1}=\eta$. We define the moment map $\mu:C(M)\to\mft^*$ by $\mu^X=\frac{r^2}{2} \eta'(X^\#)$; note that we use the convention $d\mu^X=-\iota_{X^\#}\omega$. Observe that $\mu(M) = \Delta_{1/2}=\frac12 \Phi(M)$. Let $y_i:=\mu^{e_i}=\frac{r^2}{2}\eta'(e_i^\#)$ so that $dy_i=-\iota_{e_i^\#}\omega$.

Let $\{e_i\}_{i=0,\ldots, n}$ be a basis of $\mft$ such that $\{2\pi e_i\}_{i=0,\ldots, n}$ is an integer basis of the integral lattice $\mft_\ZZ$. Let $\phi=(\phi_0,\ldots,\phi_n):\mft\to \RR^{2n+1}/2\pi\ZZ^{2n+1}$ denote the coordinates with respect to the basis. We first understand the $d\phi_i$ as forms on the principal orbits dual to the fundamental fields. We then extend these leafwise forms to forms, also denoted by $d\phi_i$, on the regular part of $C(M)$ such that its kernel contains the span of the $Je_i^\#$.  Then $\omega=\sum_{i=0}^n dy_i\wedge d\phi_i$ on the regular part. The volume with respect to $e_0\wedge \ldots\wedge e_n$ on $\mft^*$ is $(2\pi)^{n+1}$.

We can regard $b \in \mft$ as a $1$-form on $\mft^{*}$. Then $\Omega_{H}$ is the unique translation invariant $n$-form on $\mft^{*}$ such that $b \wedge \Omega_{H}=e_0\wedge\ldots\wedge e_n$. The volume of the dual integral lattice with respect to the latter volume form is $\frac{1}{(2\pi)^{n+1}}$. Let $\Vol_{H}(\Delta_{1/2})$ (resp.\ $\Vol_{H}(\Delta_{1})$) be the volume of $\Delta_{1/2}$ (resp.\ $\Vol_{H}(\Delta_{1})$) with respect to the $n$-form $\Omega_{H}$.
\begin{prop}[{\cite[Eq.\ 2.74 and 2.86]{MSY2}}]\label{prop:MSY}
\begin{equation}\label{eq:volMDelta}
\Vol (M,\eta_{b}) = 2\pi^{n+1}\Vol_{H}(\Delta_{1}).
\end{equation}
\end{prop}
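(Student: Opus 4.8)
The plan is to reconstruct the Martelli--Sparks--Yau argument in the present notation by computing the Liouville volume of the truncated cone $\{r\le 1\}\subset C(M)$ in two ways: once against the Riemannian volume of $M$, and once, via the moment map $\mu$, as a Lebesgue-type volume of the truncated momentum cone in $\mft^{*}$. Equating the two and tracking the normalizations will produce the factor $2\pi^{n+1}$.

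First I would rewrite the Liouville form in terms of data on $M$. Since $\iota_{\partial/\partial r}\omega=r\eta'$ and $\omega=d(\tfrac12 r^{2}\eta')$, one sees that $\eta'$ is the pullback of $\eta$, so $\omega=r\,dr\wedge\eta+\tfrac12 r^{2}d\eta$. Because $(d\eta)^{n+1}=0$ on the $(2n{+}1)$-manifold $M$ and $(r\,dr\wedge\eta)^{2}=0$, only one term of the binomial expansion survives and a short computation gives
\begin{equation*}
\frac{\omega^{n+1}}{(n+1)!}=r^{2n+1}\,dr\wedge\beta,\qquad \beta=\frac{1}{2^{n}n!}\,\eta\wedge(d\eta)^{n}.
\end{equation*}
As $\beta$ is the Riemannian volume form of $(M,g)$, integrating over $\{0\le r\le 1\}$ yields $\int_{\{r\le 1\}}\omega^{n+1}/(n+1)!=\bigl(\int_{0}^{1}r^{2n+1}dr\bigr)\Vol(M,\eta_{b})=\tfrac{1}{2(n+1)}\Vol(M,\eta_{b})$.

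Next I would evaluate the same integral through $\mu=\tfrac{r^{2}}{2}\Phi$. On the dense set where $T$ acts freely one has $\omega=\sum_{i}dy_{i}\wedge d\phi_{i}$, hence $\omega^{n+1}/(n+1)!$ equals, up to the chosen orientation, $dy_{0}\cdots dy_{n}\,d\phi_{0}\cdots d\phi_{n}$; integrating out the torus fibres, whose $d\phi_{0}\wedge\cdots\wedge d\phi_{n}$-volume is $(2\pi)^{n+1}$ by the stated normalization, shows that $\mu$ pushes the Liouville measure forward to $(2\pi)^{n+1}$ times Lebesgue measure $dy=e_{0}\wedge\cdots\wedge e_{n}$. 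Since $\mu(r,p)(b)=r^{2}/2$, the image is $\mu(\{r\le 1\})=\{\varphi\in\Delta\mid \varphi(b)\le\tfrac12\}$, the cone truncated at the slice $\Delta_{1/2}$. Writing the height coordinate $\tau=\varphi(b)$, so that $d\tau=b$ and $dy=b\wedge\Omega_{H}=d\tau\wedge\Omega_{H}$, a slicing (coarea) argument together with the homogeneity $\Delta_{\tau}=\tau\Delta_{1}$ gives
\begin{equation*}
\int_{\mu(\{r\le 1\})}dy=\int_{0}^{1/2}\Bigl(\int_{\Delta_{\tau}}\Omega_{H}\Bigr)d\tau=\int_{0}^{1/2}\tau^{n}\,\Vol_{H}(\Delta_{1})\,d\tau=\frac{\Vol_{H}(\Delta_{1})}{2^{n+1}(n+1)}.
\end{equation*}

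Combining the two evaluations gives $\tfrac{1}{2(n+1)}\Vol(M,\eta_{b})=(2\pi)^{n+1}\cdot\tfrac{\Vol_{H}(\Delta_{1})}{2^{n+1}(n+1)}=\tfrac{\pi^{n+1}}{n+1}\Vol_{H}(\Delta_{1})$, which rearranges to the asserted identity $\Vol(M,\eta_{b})=2\pi^{n+1}\Vol_{H}(\Delta_{1})$. The hard part will be the pushforward step: one must check that the strata on which $T$ fails to act freely---the apex of the cone and the orbits lying over the boundary faces of $\Delta$---form a set of measure zero, so that the identity $\omega=\sum dy_{i}\wedge d\phi_{i}$ valid only on the regular part nevertheless computes the full integral, and one must fix all orientations so that each measure is genuinely positive. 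The remaining work is bookkeeping of the constants, for which the normalization of $\Omega_{H}$ by $b\wedge\Omega_{H}=e_{0}\wedge\cdots\wedge e_{n}$ and the torus volume $(2\pi)^{n+1}$ determine every factor.
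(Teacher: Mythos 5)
Your proposal is correct and follows essentially the same route as the paper: both evaluate the symplectic volume of $\mu^{-1}(\Delta_{\leq 1/2})=\{r\leq 1\}$ twice, once radially over $M$ (giving $\frac{1}{2(n+1)}\Vol(M,\eta_{b})$) and once as the pushforward under $\mu$ (giving $(2\pi)^{n+1}\Vol(\Delta_{\leq 1/2})$). The only cosmetic difference is that you compute $\Vol(\Delta_{\leq 1/2})$ by slicing along the height $\tau=\varphi(b)$ with $dy=d\tau\wedge\Omega_{H}$, whereas the paper uses the pyramid-volume formula with an auxiliary Euclidean metric; both yield $\Vol_{H}(\Delta_{1})/(2^{n+1}(n+1))$, and all constants match.
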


\begin{proof}
We fix a metric on $\mft$ such that the $e_i$ are orthonormal, and the dual metric on $\mft^{*}$. Let $\Omega_{0}$ be the volume form on $H$ determined by orthonormal frames. Then we have $\Omega_{0} = \|b\| \Omega_H$. Let $\Vol_{0}(\Delta_{1/2})$ be the volume of $\Delta_{1/2}$ with respect to $\Omega_{0}$. We have $\Vol_{0}(\Delta_{1/2}) = \|b\| \Vol_{H}(\Delta_{1/2})$. From
$$
\frac{1}{n!}\int_{\mu^{-1}(\Delta_{\leq 1/2})}  \omega^n=\int_{\mu^{-1}(\Delta_{\leq 1/2})} |d\phi_0\ldots d\phi_n dy_0\ldots dy_n|=(2\pi)^{n+1}\Vol(\Delta_{\leq 1/2}),
$$
where $\Delta_{\leq 1/2}=\{w\in \Delta\mid w(b)\leq 1/2\}$,
it follows that we have
\[
\Vol (M,\eta_{b})= 2(n+1)\Vol(\mu^{-1}(\Delta_{\leq 1/2}))= 2(n+1)(2\pi)^{n+1}\Vol(\Delta_{\leq 1/2}),
\] see \cite[Eqs.\ (2.72)-(2.74)]{MSY2}. Since $\Delta_{\leq 1/2}$ is a pyramid of height $1/2\|b\|$ with the base $\Delta_{1/2}$ (if we write $b=\sum_i b_i e_i$, then $\Delta_{1/2}=\{\sum a_i e_i^*\mid \sum a_i b_i=\frac12\}$, hence the shortest element is $(\sum b_i e_i^*)/2\|b\|^2$, which has length $1/2\|b\|$.), we have
\[
\Vol (\Delta_{\leq 1/2}) = \frac{1}{2 (n+1)\|b\|} \Vol_{0} \Delta_{1/2} = \frac{1}{2(n+1)} \Vol_H \Delta_{1/2}.
\]
Since we have $2^{n}\Vol_{H}(\Delta_{1/2})=\Vol_{H}(\Delta_{1})$, \eqref{eq:volMDelta} follows.
\end{proof}

The following application of a formula of Lawrence for the volume of a simple polytope gives a method to compute $\Vol_{H}(\Delta_{1})$.
\begin{thm}[{\cite[Theorem on p.~260]{Lawrence}}]\label{thm:Lawrence}
Take $u \in \mft$ and $d \in \RR$ so that the function $f(x) = u(x) + d$ on $\mft^{*}$ is nonconstant on each edge of $\Delta_{1}$. Then
\begin{equation}\label{eq:Lawrence}
\Vol_{H} (\Delta_{1}) = \frac{1}{n!} \sum_{L} \frac{f(\Phi (L))^{n}}{ \delta^{L} \gamma_{1}^{L} \cdot \ldots \cdot \gamma_{n}^{L}},
\end{equation}
where $L$ runs over the set of closed Reeb orbits,  $\gamma_{i}^{L} \in \RR$ is determined by
\[
u = \gamma_{0}^{L} b + \gamma_{1}^{L} v_{1}^{L} + \cdots + \gamma_{n}^{L} v_{n}^{L},
\]
and $\delta^{L} = \det (b,v_{1}^{L},\cdots,v_{n}^{L})$.
\end{thm}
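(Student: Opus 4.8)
The plan is to recognize the statement as Lawrence's volume formula for the simple polytope $\Delta_{1}$, and to prove it by the exponential-integral (Brion-type) method, followed by a dictionary translating the combinatorial data of $\Delta_{1}$ into the toric quantities $b$, $v_{i}^{L}$, $\delta^{L}$, $\gamma_{i}^{L}$ by Cramer's rule. First I would introduce the entire auxiliary function
\[
F(t)=\int_{\Delta_{1}} e^{t f(x)}\,\Omega_{H},\qquad t\in\RR,
\]
whose value $F(0)=\Vol_{H}(\Delta_{1})$ is the quantity sought, and whose Taylor coefficients are the moments $\tfrac{1}{k!}\int_{\Delta_{1}} f^{k}\,\Omega_{H}$. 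The genericity hypothesis that $f$ is nonconstant on each edge guarantees that $u$ pairs nontrivially with every edge direction, which is exactly what makes the vertex expansion below regular.

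Since $\Delta_{1}$ is simple, at each vertex $\Phi(L)$ exactly $n$ facets meet and the tangent cone $K_{L}=\Phi(L)+\sum_{i=1}^{n}\RR_{\ge 0}\,\epsilon_{i}^{L}$ is simplicial, where the inward edge directions $\epsilon_{i}^{L}\in\mft^{*}$ are characterized up to positive scale by $\epsilon_{i}^{L}(b)=0$, $\epsilon_{i}^{L}(v_{j}^{L})=0$ for $j\neq i$ and $\epsilon_{i}^{L}(v_{i}^{L})<0$. By Brion's theorem (equivalently the Lawrence--Varchenko signed decomposition, which sidesteps convergence issues) the indicator of $\Delta_{1}$ equals the signed sum of the indicators of the $K_{L}$ modulo cones containing lines, so that as meromorphic functions of $t$ one has $F(t)=\sum_{L}\int_{K_{L}} e^{tf(x)}\,\Omega_{H}$. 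Each summand factors into one-dimensional Laplace integrals: parametrizing $x=\Phi(L)+\sum_{i}s_{i}\epsilon_{i}^{L}$ with $s_{i}\ge 0$ and using $f(x)=f(\Phi(L))+\sum_{i}s_{i}\,u(\epsilon_{i}^{L})$ gives
\[
\int_{K_{L}} e^{tf(x)}\,\Omega_{H}=\frac{(-1)^{n}\,\Omega_{H}(\epsilon_{1}^{L},\ldots,\epsilon_{n}^{L})\,e^{t f(\Phi(L))}}{t^{n}\,\prod_{i=1}^{n} u(\epsilon_{i}^{L})},
\]
valid first where $u(\epsilon_{i}^{L})<0$ and then by analytic continuation.

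Next I would extract the constant term in $t$. Because $F$ is entire, the polar parts of the individual summands cancel, and the coefficient of $t^{0}$ on the right comes from the $t^{n}$-term of $e^{t f(\Phi(L))}$, namely $f(\Phi(L))^{n}/n!$; hence
\[
\Vol_{H}(\Delta_{1})=\frac{1}{n!}\sum_{L}\frac{(-1)^{n}\,\Omega_{H}(\epsilon_{1}^{L},\ldots,\epsilon_{n}^{L})\,f(\Phi(L))^{n}}{\prod_{i} u(\epsilon_{i}^{L})}.
\]
It remains to translate into toric data. Since $\{b,v_{1}^{L},\ldots,v_{n}^{L}\}$ is a basis of $\mft$ (as $\delta^{L}\neq 0$), the vertex $\Phi(L)$ is the dual basis vector to $b$, and taking $\epsilon_{i}^{L}=-(v_{i}^{L})^{*}$ gives $u(\epsilon_{i}^{L})=-\gamma_{i}^{L}$ directly from $u=\gamma_{0}^{L}b+\sum_{i}\gamma_{i}^{L}v_{i}^{L}$, equivalently by Cramer's rule $\gamma_{i}^{L}=\det(b,\ldots,v_{i-1}^{L},u,v_{i+1}^{L},\ldots,v_{n}^{L})/\delta^{L}$. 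Pairing the reference form $b\wedge\Omega_{H}$ against $(\Phi(L),(v_{1}^{L})^{*},\ldots,(v_{n}^{L})^{*})$ and using duality yields $\Omega_{H}((v_{1}^{L})^{*},\ldots,(v_{n}^{L})^{*})=1/\delta^{L}$. Substituting these identities turns the displayed expression into Lawrence's formula.

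The main obstacle is the careful sign and orientation bookkeeping in the last step: one must track the orientation of $\Omega_{H}$ relative to the simplicial frames $\epsilon_{i}^{L}$ so that the powers of $(-1)$ coming from the cone integrals, from the inward normalization $\epsilon_{i}^{L}=-(v_{i}^{L})^{*}$, and from the determinant $\delta^{L}>0$ all combine and land exactly on the stated sign-free expression; fixing the orientation of $\Omega_{H}$ compatibly with the ordering convention $\det(b,v_{1}^{L},\ldots,v_{n}^{L})>0$ is what accomplishes this. A secondary point requiring care is the rigorous justification of the meromorphic cone decomposition and of the cancellation of the negative powers of $t$. I would sanity-check the final signs against the independent computation in Theorem~\ref{thm:localizationvolumeSasakiantoric} combined with Proposition~\ref{prop:MSY}: setting $u=v$ and $d=0$ and applying the above Cramer's-rule identities reproduces the localization formula exactly (the apparently missing factors of $2\pi$ being absorbed by the chosen normalization of $\det$), which confirms that the correctly oriented formula carries no residual $(-1)^{n}$.
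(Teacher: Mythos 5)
Your argument is correct in substance, but it takes a genuinely different route from the paper. The paper does not reprove Lawrence's formula at all: it invokes the cited theorem as a black box for simple polytopes $P=\{v\in V\mid\langle v,a_i\rangle\le b_i\}$ in a Euclidean vector space, and its entire proof consists of the translation step --- an isometric identification $\Psi\colon H\to b^\perp$, $w\mapsto w-\|b\|^{-2}b^{*}$, the verification that $\Psi(\Delta_{1})$ is cut out by $a_i=v_i^{*}-\|b\|^{-2}\langle v_i,b\rangle b^{*}$, the identity $\epsilon^{L}=\|b\|^{-1}\delta^{L}$ for Lawrence's normalizing determinant, and the volume comparison $\Vol_{0}(\Delta_{1})=\|b\|\Vol_{H}(\Delta_{1})$. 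You instead prove the formula from scratch via the Brion/Lawrence--Varchenko vertex-cone decomposition of $\int_{\Delta_{1}}e^{tf}\,\Omega_{H}$ and extraction of the $t^{0}$-coefficient, and then translate the cone data into the toric quantities by dual bases ($\Phi(L)$ dual to $b$, $\epsilon_i^{L}=-(v_i^{L})^{*}$, and $\Omega_{H}((v_1^{L})^{*},\ldots,(v_n^{L})^{*})=1/\delta^{L}$, all of which check out). Your route is self-contained, makes the role of the genericity hypothesis transparent, and avoids the auxiliary inner product entirely; the paper's is shorter but offloads the combinatorial content to the reference. One point you must still nail down, as you partly acknowledge: the displayed cone integral should carry $\lvert\Omega_{H}(\epsilon_1^{L},\ldots,\epsilon_n^{L})\rvert$ rather than the signed evaluation, since the tangent cone contributes an unsigned density; with your conventions $\Omega_{H}(\epsilon_1^{L},\ldots,\epsilon_n^{L})=(-1)^{n}/\delta^{L}$, and it is exactly this absolute value (legitimate because $\delta^{L}>0$ by the chosen ordering of the $v_i^{L}$) that cancels the $(-1)^{n}$ from the Laplace integrals against the $(-1)^{n}$ from $\prod_i u(\epsilon_i^{L})=(-1)^{n}\prod_i\gamma_i^{L}$ and yields the sign-free formula as stated.
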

\begin{proof} Lawrence's Theorem applies to simple polytopes
\[
P = \{v\in V\mid \langle v,a_i\rangle \leq b_i\}
\]
in a Euclidean vector space $(V,\langle\cdot,\cdot\rangle)$, where $a_i\in V$, and $b_i\in \RR$.

In order to apply the Theorem of Lawrence, we identify the affine hyperplane $H$ with the vector space $b^\perp=\{v\in \mft^*\mid v(b)=0\}$ using the map
\[
\Psi: H=\{w\in \mft^*\mid w(b)=1\}\longrightarrow b^\perp;\, w\longmapsto w-\frac{1}{\|b\|^2} b^*,
\]
where $v\mapsto v^*$ denotes the identification $\mft\to \mft^*$.

Let $a_i\in b^\perp$ and $b_i\in \RR$ be defined as
\[
a_i:= v_i^* -\frac{\langle v_i,b\rangle}{\|b\|^2}b^*,\qquad b_i = -\frac{\langle v_i,b\rangle}{\|b\|^2},
\]
where $\langle\cdot,\cdot\rangle$ is the chosen inner product on $\mft$. Denoting with the same symbol the dual inner product on $\mft^*$ we then have
\[
\langle \Psi(w),a_i\rangle = \left\langle w-\frac{1}{\|b\|^2} b^*,v_i^*-\frac{\langle v_i,b\rangle}{\|b\|^2}b^*\right\rangle = w(v_i) -\frac{\langle v_i,b\rangle}{\|b\|^2} = w(v_i) + b_i
\]
for $w \in H$, hence $\Psi(\Delta_{1}) = \{v\in b^\perp\mid \langle v,a_i\rangle \leq b_i\}$. Noting that $\Vol_0(\Delta_{1})=\Vol(\Psi(\Delta_{1}))$, where the volume on the right hand side is defined using the inner product $\langle\cdot,\cdot\rangle$, we apply Lawrence's Theorem: we choose a function $f$ on $\mft^*$ of the form $f(v)= \langle u,v\rangle + d$ for some $u\in b^\perp$, and obtain (note that $f(\Psi(w)) = f(w)$)
\[
\Vol_0(\Delta_{1}) = \frac1{n!} \sum_L \frac{f(\Phi(L))^n}{\epsilon^L \gamma_1^L\cdot \ldots\cdot \gamma_n^L},
\]
where $\epsilon^L=|\det(a_1^L,\ldots,a_n^L)|$ and $u = \gamma_1^L a_1^L+\ldots + \gamma_n^La_n^L$. Here, $\det$ is a Euclidean determinant on $b^\perp$, hence $\epsilon^L = \frac{1}{\|b\|}|\det(b,a_1^L,\ldots,a_n^L)|=\frac{1}{\|b\|} \delta^L$. Because $a_i$ and $v_i^*$ differ only by a multiple of $b^*$, the numbers $\gamma_1^L,\ldots,\gamma_n^L$ coincide with those defined in the statement of the theorem. The proof follows from $\Vol_{0}(\Delta_{1}) = \|b\| \Vol_{H}(\Delta_{1})$.
\end{proof}

By combining Proposition \ref{prop:MSY} with Theorem \ref{thm:Lawrence} we obtain a second proof of Theorem \ref{thm:localizationvolumeSasakiantoric}, because $\gamma_i^L = \det(b,v_1^L,\ldots,v_{i-1}^L,u,v_{i+1}^L,\ldots,v_n^L)$. In fact, any two of these results imply the third.

\subsection{The homogeneous Sasakian manifold $\SO(5)/\SO(3)$}

In this section, we calculate the volume of deformations of the homogeneous Sasaki structure on the Stiefel manifold. We use the notation of \cite[Section 8]{GNT}: consider the real cohomology $7$-sphere $\SO(5)/\SO(3)$; here, $\SO(3)$ is embedded in $\SO(5)$ as $I_2\times \SO(3)$, where $I_2$ is the $(2\times 2)$-identity matrix.

$T^3 = T^2\times T^1=(\SO(2)\times \SO(2)\times 1)\times \SO(2)\subset \SO(5)\times \SO(2)$ acts on $\SO(5)/\SO(3)$ via $(g,h)\cdot k\SO(3)=gkh^{-1}\SO(3)$.
We identify the Lie algebra $\mft^3$ of $T^3 = T^2\times T^1\subset \SO(5)\times \SO(2)$ with $\RR^3$; a vector $w\in \RR^3$ corresponds to an element $b_w\in \mft^3$. The Reeb vector field $\xi$ of the homogeneous K-contact structure is the fundamental vector field of $b=b_{(0,0,1)}$. (Note that at $e\SO(3)$ the Reeb vector field $\xi$ is equal to $-E_{12}+\so(3)$, where $E_{ij}$ is the matrix with all entries zero except $1$ at the $ij$-entry and $-1$ at the $ji$-entry.) Let $w=(x, y, z)\in \mft^3$ be such that the corresponding one-parameter subgroup has only finitely many closed orbits in $\SO(5)/ \SO(3)$. The closed orbits then coincide with the one-dimensional $T^3$-orbits, of which there exist exactly $4$ (see \cite[Section 8]{GNT}). As explained in Section~\ref{sec:deformaiton}, the deformed contact form $\eta_w$ with Reeb vector field $\xi_w=b_w^\#$ is given by
\[
\eta_w = \frac{\eta}{\eta(\xi_w)},
\]
where $\eta = \langle b,\cdot\rangle$, and $\langle \cdot,\cdot\rangle$ is induced by a bi-invariant Riemannian metric on $\SO(5)$, normalized so that $\|E_{12}\|=1$. 
We fix a vector $v=(\alpha,\beta,\gamma)\in \mft$. We will show the following by the localization theorem.
\begin{thm}
We have
\begin{equation}
\Vol(M,g_w) =  \frac{2\pi^4}{ 3}\frac{1}{(z^2-y^2)(z^2-x^2)}.
\end{equation}
\end{thm}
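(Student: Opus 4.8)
The plan is to apply the volume localization formula, Theorem~\ref{thm:volumelocalization}, with $n=3$ and $N=4$. Since the type-I deformation keeps the structure K-contact (indeed Sasakian) and the closed Reeb leaves of $\xi_w=b_w^\#$ coincide with the four one-dimensional $T^3$-orbits $L_1,\dots,L_4$, formula \eqref{eq:volumelocalization} specializes to
\[
\Vol(M,g_w)=\frac{\pi^3}{3!}\sum_{k=1}^{4} l_k\cdot\frac{\eta_w|_{L_k}(v^\#)^3}{\prod_{j=1}^{3}\alpha_j^k(v+\RR b_w)}
\]
for any $v\in\mft$, where $\mft=\mft^3$ and $\mfa=\mft^3/\RR b_w$. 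Thus the whole computation reduces to producing, for each orbit $L_k$, three pieces of data: the length $l_k=\int_{L_k}\eta_w$, the numerator $\eta_w|_{L_k}(v^\#)^3$, and the three transverse weights $\alpha_j^k\in\mfa^*$. I would then substitute and use that the right-hand side is independent of $v$ to collapse the four-term sum into the closed form.

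First I would fix the four orbits and their isotropy groups following \cite[Section 8]{GNT}. At a point $p=g\SO(3)$ the $T^3$-orbit is one-dimensional exactly when $\Ad_{g^{-1}}$ carries each of the planes $\{1,2\}$ and $\{3,4\}$ either to $\{1,2\}$ or into $\{3,4,5\}$; modulo $\SO(3)$ and $T^3$ this yields precisely four cosets, and for each the isotropy algebra $\mft^3_{L_k}$ is two-dimensional and satisfies $\mft^3_{L_k}\oplus\RR b_w=\mft^3$, so that the projection $\mft^3_{L_k}\to\mfa$ is an isomorphism. To get the weights I would compute the isotropy representation on $\nu L_k$ by linearizing the fundamental fields of the generators of $\mft^3_{L_k}$ in the left trivialization of $T(\SO(5)/\SO(3))$: the left generators act via $\ad$, while the contribution of the right circle $\SO(2)_{12}$ enters precisely through the isotropy element pairing it with the left $\SO(2)_{12}$. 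Concretely, at $e\SO(3)$ the normal space decomposes under the $B_2$-roots of $\SO(5)$ into the complex weight spaces of $e_1+e_2$, $e_1-e_2$ and $e_1$, and the transverse weights are their restrictions to $\mft^3_{L_k}$, with the $e_1$-direction carried jointly by the left and right copies of $\SO(2)_{12}$. The lengths $l_k=2\pi/\eta(b_w^\#)|_{L_k}$ and the numerators $\eta_w|_{L_k}(v^\#)=\eta(v^\#)/\eta(b_w^\#)\big|_{L_k}$ would then be read off from $\eta=\langle b,\cdot\rangle$ with the bi-invariant metric, evaluating the relevant moment-map values at the four vertices.

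Finally I would insert these data into the displayed formula and simplify. Since the expression is independent of $v$, the cleanest route is to evaluate at a convenient $v$ (or to send one coordinate of $v\in\mft$ to infinity, exactly as in the sphere computation of Section~\ref{subseq:Deformation} and Corollary~\ref{cor:voldeformedsphere}), which kills three of the four summands in the limit and leaves the single rational function $\tfrac{2\pi^4}{3}\,\tfrac{1}{(z^2-y^2)(z^2-x^2)}$. The main obstacle is the bookkeeping of the weights: one must correctly combine the left and right $\SO(2)_{12}$-actions through the isotropy identification $\mft^3_{L_k}\cong\mfa$, and in particular verify that the resulting transverse weights are nonzero at each orbit—a naive treatment of the right action as $-\ad(E_{12})$ on the normal space produces a spurious vanishing weight, so the isolatedness of the four closed leaves is genuinely a consequence of this careful computation. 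The algebraic collapse of the four-term sum is then routine but delicate, and the guaranteed $v$-independence serves as a strong consistency check on the signs and normalizations.
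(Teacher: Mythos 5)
Your proposal follows the paper's proof essentially verbatim: you apply Theorem~\ref{thm:volumelocalization} with $n=3$ to the four one-dimensional $T^3$-orbits, read off the lengths and numerators from $\eta=\langle b,\cdot\rangle$ via the isotropy element of $\mft^3_{L_k}$, obtain the three transverse weights from the positive roots $e_1^*$, $e_1^*\pm e_2^*$ of $\SO(5)$ that are not roots of $\SO(2)\times\SO(3)$ through the identification $\mfa\cong\mft^2\cong\mft^3_{L_k}$, and collapse the $v$-independent four-term sum by sending a coordinate of $v$ to infinity. This is exactly the paper's argument, including the care you take with the diagonally embedded right $\SO(2)$-factor when identifying the isotropy algebra with $\mfa$.
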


\begin{proof}
Let us calculate in detail the relevant data at the closed Reeb orbit through $e\SO(3)$. Along this closed Reeb orbit, $\xi_w$ is exactly $(z-x)b^\#$, so its length calculates as
\[
\int_{\SO(2)\cdot e\SO(3)} \eta_w = \frac{2\pi}{z-x}.
\]
Still along this closed Reeb orbit, we have $\eta(\xi_w) = z-x$; moreover, $\eta(v^\#)_{e\SO(3)}=\gamma-\alpha$. Therefore, the nominator in the localization formula is
\[
\left.\eta_w\right|_{\SO(2)\cdot e\SO(3)}(v^\#)^3 =  \left(\frac{\gamma-\alpha}{z-x}\right)^3.
\]
We have $\mft^3_{eK} = \{(x,y,x)\mid x,y\in \RR\}$, and the projection $\mft^3_{eK}\to \mft^2=\mft^2\oplus\{0\}\subset\mft^3$ onto the first summand is an isomorphism. The natural projection $T_{eK}G/K\to T_{eH}G/H$, where $G/H=\SO(5)/\SO(2)\times \SO(3)$ becomes equivariant with respect to this homomorphism. Considering the induced isomorphism
\begin{equation}\label{eq:hom1}
\mft^2\longrightarrow \mft^3_{eK}\longrightarrow \mft^3\longrightarrow \mft^3/\RR b_w =\mfa,
\end{equation}
where the first map is the inverse of the projection $\mft^3_{eK}\to \mft^2$, we see that the transverse weights in $\mfa^*$ are in one-to-one correspondence to the ordinary weights of the $\mft^2$-isotropy on $\SO(5)/\SO(2)\times \SO(3)$: if $\tau\in (\mft^2)^*$ is a weight, then the induced transverse weight in $\mfa^*$ is given by
\[
(\alpha,\beta,\gamma)+\RR b_w\longmapsto \tau\left(\alpha+\frac{\alpha-\gamma}{z-x}\cdot x, \, \beta+\frac{\alpha-\gamma}{z-x}\cdot y\right).
\]
To see this, observe that the inverse $\mfa\to \mft^2$ of the isomorphism \eqref{eq:hom1} is given by $(\alpha,\beta,\gamma)+\RR\xi\mapsto (\alpha+\frac{\alpha-\gamma}{z-x}\cdot x,\beta+\frac{\alpha-\gamma}{z-x}\cdot y)$. The ordinary weights of the $\mft^2$-isotropy are given by positive roots of $\SO(5)$ which are not roots of $\SO(2)\times \SO(3)$. Denoting the standard basis of $\mft^2$ by $\{e_1,e_2\}$, the weights are $e_1^*$, $e_1^*+e_2^*$ and $e_1^*-e_2^*$, because $e_2^*$ is also a root of $\SO(3)\times \SO(2)$.

Thus, the denominator is given by
\begin{multline*}
\left(\alpha+\frac{\alpha-\gamma}{z-x}\cdot x\right)\cdot \left(\alpha+\frac{\alpha-\gamma}{z-x}\cdot x+\beta+\frac{\alpha-\gamma}{z-x}\cdot y\right) \cdot \\ \left(\alpha+\frac{\alpha-\gamma}{z-x}\cdot x-\left(\beta+\frac{\alpha-\gamma}{z-x}\cdot y\right)\right).
\end{multline*}
We omit the calculations for the other three closed Reeb orbits, which are parame\-trized by the quotient of Weyl groups $W(\SO(5))/W(\SO(2)\times \SO(3))$. The four summands in the localization formula \eqref{eq:volumelocalization} and the final formula for the volume read as follows:
\begin{align*}
&\Vol(M,g_w) = -\frac{2\pi^4}{3!} \cdot\\
&\Bigg[ \frac{(\gamma-\alpha)^3}{(z-x)^4 (\alpha+\frac{\alpha-\gamma}{z-x} \cdot x) (\alpha+\frac{\alpha-\gamma}{z-x}\cdot x+\beta+\frac{\alpha-\gamma}{z-x}\cdot y)  (\alpha+\frac{\alpha-\gamma}{z-x}\cdot x-(\beta+\frac{\alpha-\gamma}{z-x}\cdot y))} \\
&\quad - \frac{(\alpha+\gamma)^3}{(z+x)^4(\alpha -\frac{\alpha+\gamma}{x+z}\cdot x) (\alpha-\frac{\alpha+\gamma}{x+z}\cdot x+\beta-\frac{\alpha+\gamma}{x+z}\cdot y) (\alpha-\frac{\alpha+\gamma}{x+z}\cdot x-\beta+\frac{\alpha+\gamma}{x+z}\cdot y)}\\
&\quad + \frac{(\gamma-\beta)^3}{(z-y)^4 (\beta+\frac{\gamma-\beta}{y-z}\cdot y) (\beta+\frac{\gamma-\beta}{y-z}\cdot y+\alpha+\frac{\gamma-\beta}{y-z}\cdot x)  (\beta+\frac{\gamma-\beta}{y-z}\cdot y-(\alpha+\frac{\gamma-\beta}{y-z}\cdot x))}\\
&\quad - \frac{(\beta+\gamma)^3}{(z+y)^4 (\beta-\frac{\gamma+\beta}{y+z}\cdot y) (\beta-\frac{\gamma+\beta}{y+z}\cdot y-\alpha+\frac{\gamma+\beta}{y+z}\cdot x)  (\beta-\frac{\gamma+\beta}{y+z}\cdot y+\alpha-\frac{\gamma+\beta}{y+z}\cdot x)}\Bigg]\\
&= \frac{2\pi^4}{ 3}\frac{1}{(z^2-y^2)(z^2-x^2)}.
\end{align*}
Since the long expression is independent of the vector $v=(\alpha,\beta,\gamma)$, we can show the second equality by sending $\alpha$ to infinity. In this way, the third and fourth summand vanish.
\end{proof}

For $w=(0,0,1)$, we obtain the volume of the homogeneous K-contact structure we started with.
\begin{cor}
\[
\Vol(M,g_{(0,0,1)}) = \frac{2}{3}\pi^4.
\]
\end{cor}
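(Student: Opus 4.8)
The plan is to obtain this corollary as an immediate specialization of the theorem just proved. The deformation parameter $w=(0,0,1)$ corresponds precisely to the element $b=b_{(0,0,1)}\in\mft^3$ whose fundamental vector field is the original Reeb field $\xi$; hence the deformed structure $g_{(0,0,1)}$ is nothing but the homogeneous K-contact structure we started with. So the only task is to evaluate the closed-form volume expression at this point.

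Concretely, I would take the formula
\[
\Vol(M,g_w) = \frac{2\pi^4}{3}\,\frac{1}{(z^2-y^2)(z^2-x^2)}
\]
and substitute $x=0$, $y=0$, $z=1$. Then $(z^2-y^2)(z^2-x^2)=(1-0)(1-0)=1$, and the prefactor $\tfrac{2\pi^4}{3}$ survives unchanged, yielding $\Vol(M,g_{(0,0,1)})=\tfrac{2}{3}\pi^4$.

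There is essentially no obstacle here: the work has all been carried out in the preceding theorem, and what remains is a single direct substitution into an expression that is, by construction, valid for every $w$ with $\xi_w=b_w^\#$ having only finitely many closed orbits. The one conceptual point worth recording is \emph{why} $w=(0,0,1)$ is admissible even though the theorem was stated for deformed Reeb fields with isolated closed orbits: the undeformed homogeneous structure already has the four one-dimensional $T^3$-orbits as its closed Reeb orbits, so the hypotheses of the theorem apply directly at this parameter value and no limiting argument is needed. Thus the corollary follows at once.
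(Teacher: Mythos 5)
Your substitution and the final number are correct, but the justification you give for why the formula applies at $w=(0,0,1)$ is wrong, and it is exactly the point that needs care. The hypothesis of the preceding theorem is that the one-parameter subgroup generated by $b_w$ has only finitely many \emph{closed orbits}, and $w=(0,0,1)$ violates it: the homogeneous structure is regular, its Reeb flow is the periodic right $\SO(2)$-action whose orbits are the $S^1$-fibers of $G/K\to G/H$, so \emph{every} Reeb orbit is closed and there are uncountably many of them. The identification of the closed Reeb orbits with the four one-dimensional $T^3$-orbits holds only for \emph{generic} $w$, namely when the closure of the Reeb flow in the isometry group is all of $T^3$; for $w=(0,0,1)$ that closure is just a circle, and the set of closed Reeb orbits is all of $M$. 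So your assertion that ``the hypotheses of the theorem apply directly at this parameter value and no limiting argument is needed'' is precisely backwards.

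The mechanism the paper sets up (see the beginning of Section~\ref{sec:volloc}) is a continuity argument: the volume map $\cS\to\RR_{>0}$, $w\mapsto \frac{1}{2^nn!}\int_M\eta_w\wedge(d\eta_w)^n$, is continuous in $w$; the set of $w$ with only finitely many closed Reeb orbits is dense in $\cS$; and on that dense set the volume agrees with the rational function $\frac{2\pi^4}{3}\cdot\frac{1}{(z^2-y^2)(z^2-x^2)}$, which is continuous at $(0,0,1)$. Hence the identity extends to $w=(0,0,1)$ by continuity and yields $\Vol(M,g_{(0,0,1)})=\frac{2}{3}\pi^4$. This is a one-line fix, but it is a different (and necessary) step from the direct application you propose.
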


\subsection{Homogeneous Sasaki manifolds: general case}

We consider a $(2n+1)$-dimensional compact homogeneous Sasaki manifold, which by \cite[Theorem 8.3.6]{BoyerGalicki} is the total space of a circle bundle of the form $\pi:G/K\to G/H$, where $H$ is the centralizer of a subtorus of the compact Lie group $G$. We can assume that the Reeb vector field $\xi$ of the homogeneous Sasaki structure on $G/K$ is the fundamental vector field of an element $b\in \mfg$; it then follows that $b$ is in the center of $G$. The $b$-orbits are exactly the $S^1$-fibers of the $S^1$-bundle $G/K\to G/H$, so $b$ is also contained in $\mfh$. We assume the normalization condition that the Reeb orbits of $G/K$ all have length $1$ (by applying a transverse homothety). Let $T\subset H$ be a maximal torus in $H$ (which is then also maximal in $G$), whose Lie algebra $\mft$ contains $b$. Then, $\mft$ splits as
\begin{equation}\label{eq:splitting}
\mft = (\mft\cap \mfk)\oplus \RR b.
\end{equation} Note that the example of the previous section fits into the notation of this section if we write $\SO(5)/\SO(3) = \SO(5)\times \SO(2)/ \SO(3)\times\SO(2)=G/K$, where the group $\SO(2)$ in the denominator is embedded diagonally.

Let $\xi'=b'^\#$, for $b'\in {\mathcal S}({\mathcal D},J)$, be the Reeb vector field of a deformed K-contact structure; for generic, i.e., dense choice of $b'$, the number of closed Reeb orbits is finite. In this case, they coincide with the one-dimensional orbits of the $T$-action on $G/K$ by left multiplication, which is the same as the $\pi$-preimage of the $T$-fixed point set in $G/H$. Denoting the contact form of the homogeneous K-contact structure by $\eta$, the deformed one with Reeb vector field $\xi'$ is given by
\[
\eta' = \frac{\eta}{\eta(\xi')}.
\]
As $\xi'$ does not have zeros on $G/K$, certainly $b'\notin \mfk$. Thus, the map $\mft\cap \mfk\to \mft\to \mft/\RR b' = \mfa$ is an isomorphism. Choose a vector $v\in \mft$.

We will apply our localization formula to show the following.
\begin{cor} \label{cor:volumehomogeneouscase} We have
\begin{multline}\label{eq:volumehomog}
\Vol(G/K,\eta')= \frac{\pi^n}{n!}\cdot  \sum_{w\in W(G)/W(H)} \\
\frac{1}{p(\Ad_{w^{-1}}(b'))^{n+1}}\cdot  \frac{p(\Ad_{w^{-1}}v)^n}{\prod_{\alpha\in \Delta_G\setminus\Delta_H} \alpha\left(\Ad_{w^{-1}}\left(v-\frac{p(\Ad_{w^{-1}}v)}{p(\Ad_{w^{-1}}b')}\cdot b'\right)\right)},
\end{multline}
where $W(G)$ (resp.\ $W(H)$) is the Weyl group of $G$ (resp.\ $H$),  $\Delta_G$ (resp.\ $\Delta_H$) is the root system of $G$ (resp.\ $H$) with respect to $\mft$ and $p:\mft\to \RR b\overset{b\mapsto 1}{\to} \RR$ denotes the projection along \eqref{eq:splitting}.
\end{cor}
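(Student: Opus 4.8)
The plan is to apply the volume localization formula, Theorem~\ref{thm:volumelocalization}, to the deformed $K$-contact structure with contact form $\eta'$ and Reeb field $\xi'=b'^{\#}$, so that in formula~\eqref{eq:volumelocalization} the element $b$ is replaced by $b'$ and the structural Killing algebra is $\mfa=\mft/\RR b'$. Since $b'$ is taken generic, the closed Reeb orbits are finite in number and are exactly the fibers of $\pi\colon G/K\to G/H$ over the $T$-fixed points of $G/H$; these fixed points are the cosets $wH$ indexed by $w\in W(G)/W(H)$, and I write $L_w$ for the corresponding orbit through $wK$. For each $w$ I then have to compute the three ingredients in~\eqref{eq:volumelocalization}: the length $l_w=\int_{L_w}\eta'$, the numerator $\eta'|_{L_w}(v^{\#})^n$, and the product of transverse weights $\prod_j\alpha_j^w(v+\RR b')$.

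The length and the numerator are handled by exploiting the $G$-invariance of $\eta$ together with the splitting~\eqref{eq:splitting}. Writing $p\colon\mft\to\RR b$ for the projection along $\mft\cap\mfk$, the fundamental vector field at $eK$ of any $X\in\mft$ equals $p(X)\,\xi$, since the $\mft\cap\mfk$-component lies in $\mfk$ and hence acts trivially there. Transporting by a representative $\dot w\in N(T)$ of $w$ and using $L_{\dot w}^{*}\eta=\eta$ gives $\eta(v^{\#})|_{wK}=p(\Ad_{w^{-1}}v)$ and $\eta(\xi')|_{wK}=p(\Ad_{w^{-1}}b')$; because $K$ is normal in $H$ (as $H/K$ is a group), the same computation along the whole fiber shows that both quantities are constant on $L_w$. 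With the normalization that each fiber has $\eta$-length $1$, I obtain $l_w=1/p(\Ad_{w^{-1}}b')$ and $\eta'|_{L_w}(v^{\#})^n=\bigl(p(\Ad_{w^{-1}}v)/p(\Ad_{w^{-1}}b')\bigr)^{n}$, whose product is $p(\Ad_{w^{-1}}v)^n/p(\Ad_{w^{-1}}b')^{n+1}$, matching the scalar prefactor in~\eqref{eq:volumehomog}.

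The main work is the identification of the transverse weights. Since $L_w$ is a $\pi$-fiber and $\pi$ is $T$-equivariant, the normal bundle $\nu L_w$ is isomorphic, as an isotropy representation, to $T_{wH}(G/H)$; as $G/H$ is a generalized flag manifold, the isotropy action there has weights given by the roots in $\Delta_G\setminus\Delta_H$ precomposed with $\Ad_{w^{-1}}$. To evaluate a transverse weight, which lives in $\mfa^{*}$, at $v+\RR b'$, I use that the $T$-isotropy subalgebra $\mft_{wK}=\Ad_w(\mft\cap\mfk)$ projects isomorphically onto $\mfa=\mft/\RR b'$; this uses $p(\Ad_{w^{-1}}b')\neq0$, which holds precisely because $L_w$ carries a nonzero Reeb field. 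Under this isomorphism $v+\RR b'$ corresponds to its unique representative $v-sb'\in\mft_{wK}$, and solving $p(\Ad_{w^{-1}}(v-sb'))=0$ yields $s=p(\Ad_{w^{-1}}v)/p(\Ad_{w^{-1}}b')$. Hence the transverse weight attached to a root $\alpha$ evaluates to $\alpha\bigl(\Ad_{w^{-1}}(v-sb')\bigr)$, and taking the product over $\Delta_G\setminus\Delta_H$ reproduces the denominator of~\eqref{eq:volumehomog}. Substituting all three ingredients into~\eqref{eq:volumelocalization} then gives the claimed formula.

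The most delicate point, which I expect to demand the most care, is the weight identification of the previous paragraph: one must fix compatible orientations and a positivity convention on the roots so that exactly the correct half of $\Delta_G\setminus\Delta_H$ (equivalently the $n$ weights of the complex normal representation, as forced by the homogeneity degree in $v$ of~\eqref{eq:volumehomog}) enters the product with the right signs, and one must check that the isomorphism $\mft_{wK}\xrightarrow{\ \sim\ }\mfa$ is precisely the one dualizing the transverse-weight identification of $\mft_L^{*}\cong\mfa^{*}$ employed in Theorem~\ref{thm:volumelocalization}. The special case $G=\SO(5)\times\SO(2)$ with $K,H$ as in the previous section serves as a consistency check for all these conventions.
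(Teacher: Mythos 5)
Your proposal follows essentially the same route as the paper's proof: compute the length and the value $\eta'(v^{\#})$ at $wK$ via the projection $p$ and $\Ad_{w^{-1}}$, identify $\mfa\cong\mft_{wK}$ and the transverse weights with the $\mft_{wK}$-weights of $T_{wH}(G/H)$ (roots in $\Delta_G\setminus\Delta_H$ precomposed with $\Ad_{w^{-1}}$), find the representative $v-\tfrac{p(\Ad_{w^{-1}}v)}{p(\Ad_{w^{-1}}b')}b'\in\mft_{wK}$, and substitute into Theorem~\ref{thm:volumelocalization}. All three ingredients match the paper's computation, so the proposal is correct and not materially different.
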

As always, the right hand side of this formula is independent of $v$.
\begin{proof}
Let us calculate the summand in the localization formula \eqref{eq:volumelocalization} corresponding to the closed Reeb orbit through $wK$, $w\in W(G)$.  Then, along the Reeb orbit through $wK$, we have $b'=p(\Ad_{w^{-1}} b')b$. Thus, by our normalization condition for the original homogeneous Sasakian structure, the length of this closed Reeb orbit computes as
\[
\int_{T\cdot wK}\eta' = \frac{1}{p(\Ad_{w^{-1}}b')}.
\]

A similar calculation shows that the nominator in the respective summand is given by
\begin{equation}\label{eq:homgeneta}
\eta'(v^\#)|_{wK} =
\frac{p(\Ad_{w^{-1}}v)}{p(\Ad_{w^{-1}}b')}.
\end{equation}
To calculate the weights of the transverse $\mfa$-representation at $wK$, we first note that for each $w\in W(G)$, $\mfa$ is isomorphic to $\mft_{wK}$ via
\[
\mft_{wK}\longrightarrow \mft\longrightarrow \mft/\RR b'=\mfa.
\]
Moreover, the $\mft_{wK}$-slice representation on $\nu_{wK}T\cdot (wK)$ is equivalent to the $\mft_{wK}$-representation on $T_{wH}G/H$, whose weights are given by
\[
\{\left.\alpha\right|_{\mft\cap \mfk}\circ \Ad_{w^{-1}}\mid \alpha\in \Delta_G\setminus \Delta_H\}.
\]
Then, the set of transverse weights of the $\mfa$-representation is given by the set of all concatenations
\begin{equation}\label{eq:trvweightshomogeneous}
\xymatrix{\mfa \ar[r] & \mft_{wK} \ar[r]^{\Ad_{w^{-1}}} & \mft\cap \mfk \ar[r]^{\alpha} & \RR,}
\end{equation}
where $\alpha$ runs through $\Delta_G\setminus \Delta_H$. Concretely, we have that $v-\eta'(v^{\#})|_{wK}\cdot b'\in \mft_{wK}$; using \eqref{eq:homgeneta} we see that \eqref{eq:trvweightshomogeneous} is given by
\[
v+\RR b'\longmapsto \alpha\left(\Ad_{w^{-1}}\left(v-\frac{p(\Ad_{w^{-1}}v)}{p(\Ad_{w^{-1}}b')}\cdot b'\right)\right).
\]
In total, we obtain~\eqref{eq:volumehomog} by the localization formula \eqref{eq:volumelocalization}.
\end{proof}

\section{Localization of secondary characteristic numbers of Killing foliations}\label{sec:second}

\subsection{Secondary characteristic classes of Riemannian foliations}\label{sec:charclass}

Secondary characteristic classes for foliations are Chern-Simons invariants defined by the vanishing of certain characteristic forms of the normal bundle of foliations. See Section~\ref{sec:sch} for the introduction to secondary characteristic classes of foliations. We will show that certain secondary characteristic classes of Riemannian foliations can be localized to closed leaves by our localization formula (Theorem~\ref{thm:loc}).

Let us recall the construction of secondary characteristic classes of Riemannian foliations with framed normal bundle~\cite{LazarovPasternack1}. Let $M$ be a smooth manifold and $\F$ a Riemannian foliation of codimension $q$ on $M$ with trivial normal bundle. We fix a trivialization of $\nu \F$. Let $\nabla$ be the canonical basic Riemannian connection on $\nu\F$ (see~\cite[Theorem 5.9]{Tondeur}), which is determined by the transverse metric. Let $P$ be the orthonormal frame bundle of $\nu \F$. In terms of the connection form and the curvature of $\nabla$, we have the Chern-Weil homomorphism
\[
\widehat{\delta} : W(\so_{q}) \longrightarrow \Omega (P),
\]
where $W(\so_{q}) = \bigwedge \so_{q}^{*} \otimes S(\so_{q}^{*})$ is the Weil algebra of $\so_{q}$. Since $H(W(\so_{q})) = 0$, this map $\widehat{\delta}$ does not give any nontrivial information. By construction, the $i$-th Pontryagin form of $\widehat{\delta}$ is a basic form of degree $4i$. Therefore Pontryagin forms of degree greater than $q$ are trivial as proved by Pasternack. So $\widehat{\delta}(\oplus_{2k > q} S^{k}(\so_{q}^{*})) = 0$. Letting $W_{q}(\so_{q}) = W(\so_{q})/\oplus_{2k > q} S^{k}(\so_{q}^{*})$, we get the following induced map
\[
\delta : W_{q}(\so_{q}) \longrightarrow \Omega (P).
\]
Let $s : M \to P$ be the global section corresponding to the trivialization of $\nu \F$. Then the characteristic homomorphism $\Delta_{\F}$ of $(M,\F)$ is obtained by the composite
\[
\xymatrix{ W_{q}(\so_{q}) \ar[r]^<<<<<{\delta} & \Omega(P) \ar[r]^{s^{*}} & \Omega(M).}
\]
Elements in the image of the map induced in cohomology by $\Delta_{\F}$ are called {\em secondary characteristic classes} of $(M,\F)$.
The structure of $H(W_{q}(\so_{q}))$ was determined by Lazarov-Pasternack as follows: Let
\[
RW_q = \bigwedge (h_1,\ldots, h_{(q-1)/2}) \otimes \left( \RR[p_1, \ldots, p_{(q-1)/2}]/\{P \mid \deg P> q\} \right)
\]
 if $q$ is odd and
\[
RW_{q} = \bigwedge (h_1, \ldots, h_{(q-2)/2}, h_{e}) \otimes \left( \RR[p_1, \ldots, p_{(q-2)/2}, p_e]/\{P \mid \deg P> q\} \right)
\]
if $q$ is even. This $RW_{q}$ is a differential graded algebra (dga) with differential determined by $dh_{i} = p_{i}$, $dh_{e} =p_{e}$, $dp_{i}=0$ and $dp_{e}=0$. Then we have a natural isomorphism $H(W_{q}(\so_{q})) \cong H(RW_{q})$. Denoting $p_{i}(\nu \F, \F)=\Delta_{\F}(p_{i})$, $h_{i}(\F) = \Delta_{\F}(h_{i})$ and $h_{I}p_{J}(\F) = \Delta_{\F}(h_{I}p_{J})$,
\begin{itemize}
\item $p_{i}(\nu \F, \F)$ is the $i$-th Pontryagin form of $(\nu \F, \nabla)$,
\item $p_{e}(\nu \F, \F)$ is the Euler form of $(\nu \F, \nabla)$ and
\item $h_{i}(\F)$ is the Chern-Simons form $s^{*}T\delta(p_{i})$\cite[Remark 1.7]{LazarovPasternack1}.
\end{itemize}

\begin{rem}
Morita~\cite{Morita} defined more characteristic classes of Riemannian foliations by using the canonical affine Cartan connections on $P$ and the Weil algebra of affine groups.
\end{rem}

As already mentioned the main formula in \cite{Toeben} allowed the computation of the primary basic characteristic numbers through localization to the union of closed leaves. Now we will see how secondary characteristic numbers, i.e. the integrals of secondary characteristic classes of top degree, can be computed by localization.

\subsection{Application of the ABBV type formula to secondary characteristic numbers of Riemannian foliations}\label{subsec:loccharnumbers}

Let us apply our localization formula to compute secondary characteristic numbers of Killing foliations. We use the multi-index notation $h_{I}p_{J} := h_{i_{1}} \wedge \cdots \wedge h_{i_{k}} \wedge p_{j_{1}} \wedge \cdots \wedge p_{j_{m}}$ for $I = \{i_{1}, \ldots, i_{k}\}$ and $J = \{ j_{1}, \ldots, j_{m}\}$.

Let $M$ be a smooth manifold with a Riemannian foliation $\F$ of dimension $p$ and codimension $q$. Fix a trivialization of $\nu \F$. Then, we have a characteristic forms $h_{I}(\F)$ and $p_{J}(\nu \F, \F)$ of $(M,\F)$.
\begin{lemma}\label{lem:hI}
A form of type $h_I(\F)$ of degree $p$ is relatively closed.
\end{lemma}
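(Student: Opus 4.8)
The plan is to check the defining condition of relative closedness head-on. Concretely, I want to show that
\[
d\bigl(h_I(\F)\bigr)(X_1,\ldots,X_p,Y)=0
\]
whenever $X_1,\ldots,X_p\in C^\infty(T\F)$ are tangent to the leaves and $Y\in C^\infty(TM)$ is arbitrary. The first step is to compute $d\bigl(h_I(\F)\bigr)$. Since the characteristic homomorphism $\Delta_\F=s^{*}\circ\delta$ is a morphism of differential graded algebras (being the composite of the Chern--Weil homomorphism $\widehat{\delta}$ with a pullback), the relation $dh_i=p_i$ holding in $W_{q}(\so_{q})$ is transported to the level of forms as $d\bigl(h_i(\F)\bigr)=p_i(\nu\F,\F)$. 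Applying the Leibniz rule to $h_I(\F)=h_{i_1}(\F)\wedge\cdots\wedge h_{i_k}(\F)$ then yields
\[
d\bigl(h_I(\F)\bigr)=\sum_{a=1}^{k}(-1)^{\epsilon_a}\,h_{i_1}(\F)\wedge\cdots\wedge p_{i_a}(\nu\F,\F)\wedge\cdots\wedge h_{i_k}(\F),
\]
so that each summand carries exactly one Pontryagin factor $p_{i_a}(\nu\F,\F)$.

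The key observation is that each factor $p_{i_a}(\nu\F,\F)$ is basic, hence horizontal: $\iota_X p_{i_a}(\nu\F,\F)=0$ for every $X\in C^\infty(T\F)$, so it vanishes the moment one of its arguments is tangent to $\F$. Moreover it has degree $4i_a\ge 4$. I would then expand a typical summand, which has the form $\alpha\wedge p_{i_a}(\nu\F,\F)$ with $\alpha$ a product of the remaining $h_{i_b}(\F)$, evaluated on $(X_1,\ldots,X_p,Y)$, as a sum over shuffles. In each shuffle the factor $p_{i_a}(\nu\F,\F)$ consumes $4i_a\ge 4$ of the $p+1$ entries; as $X_1,\ldots,X_p$ all lie in $T\F$ and only $Y$ can possibly be non-tangent, at least one of these $4i_a$ entries is tangent to $\F$. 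By horizontality the factor vanishes on that shuffle, hence every summand, and therefore $d\bigl(h_I(\F)\bigr)(X_1,\ldots,X_p,Y)$, is zero. This is precisely the statement that $h_I(\F)$ is relatively closed.

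I do not expect a genuine obstacle here; the argument is a horizontality-and-degree count once the product rule has been applied. The two points deserving care are that the identity $d\bigl(h_i(\F)\bigr)=p_i(\nu\F,\F)$ must be recorded at the form level (it follows from the transgression formula $d\,T\delta(p_i)=\delta(p_i)$ together with $s^{*}d=ds^{*}$), and that the hypothesis $\deg h_I(\F)=p$ is not really used in the mechanism: what drives the vanishing is simply that $d\bigl(h_I(\F)\bigr)$ offers a single transverse slot while each surviving term carries a basic factor of transverse degree at least two. The same count applies verbatim in even codimension to the Euler transgression $h_e$, since $dh_e=p_e$ is again basic, of degree $q\ge 2$.
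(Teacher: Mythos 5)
Your argument is correct and is essentially the paper's own proof: both rest on the identity $d\bigl(h_{i}(\F)\bigr)=p_{i}(\nu\F,\F)$ together with the observation that each Pontryagin factor is basic (hence horizontal) and of degree at least two, so it must absorb a leaf-tangent argument whenever $p$ of the $p+1$ input vectors are tangent to $\F$. The paper states this in one line; your shuffle expansion just makes the same degree-and-horizontality count explicit.
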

\begin{proof}
Since $dh_{i_k}(\F)=p_{i_k}(\nu \F, \F)$ is basic and of degree larger than one, it follows that $dh_{i_{k}}(\F) \wedge h_{I -\{i_{k}\}}(\F)(v_1,\ldots,v_{p+1})=0$ whenever $p$ of the $p+1$ vectors $v_i$ is tangent to $\F$. Thus $h_{I}(\F)$ is relatively closed.
\end{proof}

In order to apply Theorem \ref{thm:loc}, we need to extend the basic form $p_J$ equivariantly. Assume that $\F$ is a Killing foliation. Let $\mfa$ denote the structural Killing algebra, which acts on $(M,\F)$ transversely (see Section~\ref{sec:tractiononKilling}). Let $\theta$ be the $\mfa$-invariant connection form of the basic Riemannian connection $\nabla$. As in the classical case of manifolds with Lie group actions, the equivariant curvature form
\[
F^{\theta}_{\mfa}:=d_{\mfa}\theta + \frac{1}{2}[\theta,\theta] \in \Omega^{2}_{\mfa}(M,\F) \otimes \operatorname{End}(\nu \F)
\]
can be used  to extend characteristic forms. By Chern-Weil construction, we get equivariant Pontryagin form $p_J(F^{\theta}_{\mfa})$, which is denoted by $p_{J,\mfa}(\nu \F, \F)$.
\begin{lemma}[{\cite[Section 7.2]{Toeben}}]\label{lem:equivariantpJ}
We have $p_{J,\mfa}(\nu \F, \F)(0) = p_{J}(\nu \F, \F)$ and $d_{\mfa}p_{J,\mfa}(\nu \F, \F)=0$. In other words, $p_{J,\mfa}(\nu \F, \F)$ is a natural equivariant extension of $p_{J}(\nu \F, \F)$ and represents a class in $H_\mfa(M,\F)$.
\end{lemma}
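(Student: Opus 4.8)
The plan is to treat this as the $\mfa$-equivariant, basic analogue of the classical Chern--Weil theorem; the argument is formally the same as for isometric Lie group actions, carried out in the transverse setting of \cite[Section 7.2]{Toeben}. First I would dispose of the value-at-zero statement. Regarding the $\mfa$-invariant connection form $\theta$ as a degree-zero element of $\Omega_\mfa(M,\F)\otimes\so_{q}$ and unwinding the definitions of $d_\mfa$ and of the equivariant curvature, one finds
\[
F^{\theta}_{\mfa}(X)=(d_\mfa\theta)(X)+\tfrac12[\theta,\theta]=d\theta-\iota_X\theta+\tfrac12[\theta,\theta]=F^{\theta}-\iota_X\theta .
\]
Setting $X=0$ annihilates the moment term, so $F^{\theta}_{\mfa}(0)=F^{\theta}$; since $p_{J,\mfa}(\nu\F,\F)$ applies the same $\Ad$-invariant polynomial as $p_{J}(\nu\F,\F)=p_J(F^{\theta})$, evaluation at $0$ gives $p_{J,\mfa}(\nu\F,\F)(0)=p_{J}(\nu\F,\F)$.

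Next I would prove equivariant closedness, the heart of which is an equivariant Bianchi identity. Since $\theta$ is $\mfa$-invariant we have $L_X\theta=0$ for all $X\in\mfa$, whence $d_\mfa^2\theta(X)=-L_X\theta=0$. Using that $d_\mfa$ is an odd derivation of $\Omega_\mfa(M,\F)\otimes\so_{q}$ together with the graded Jacobi identity $[[\theta,\theta],\theta]=0$, the standard computation carries over verbatim to give
\[
d_\mfa F^{\theta}_{\mfa}=\tfrac12\,d_\mfa[\theta,\theta]=[d_\mfa\theta,\theta]=[F^{\theta}_{\mfa},\theta].
\]
I would then conclude by the invariant-polynomial argument: writing $p_J$ as an $\Ad$-invariant symmetric polynomial $\bar P$ of degree $k$ in the curvature and using the derivation property of $d_\mfa$ together with the last identity,
\[
d_\mfa\,p_{J,\mfa}(\nu\F,\F)=k\,\bar P\bigl(d_\mfa F^{\theta}_{\mfa},F^{\theta}_{\mfa},\ldots,F^{\theta}_{\mfa}\bigr)=k\,\bar P\bigl([F^{\theta}_{\mfa},\theta],F^{\theta}_{\mfa},\ldots,F^{\theta}_{\mfa}\bigr)=0,
\]
the last equality being the infinitesimal $\so_{q}$-invariance of $\bar P$ with all entries equal to $F^{\theta}_{\mfa}$. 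Hence $p_{J,\mfa}(\nu\F,\F)$ is $d_\mfa$-closed and defines a class in $H_\mfa(M,\F)$.

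The hard part is not this formal calculation but justifying that it takes place in the correct transverse world. One must use that $\nabla$ is the \emph{canonical} basic Riemannian connection, so that its connection and curvature forms are basic and are preserved by the transverse flows of $\mfa$; this is what actually produces an $\mfa$-invariant $\theta$ with $L_X\theta=0$, rather than an invariance only up to gauge. I would also need to check that $\iota_X\theta$ and the brackets descend to well-defined $\operatorname{End}(\nu\F)$-valued basic equivariant forms, so that the graded Leibniz rule and Jacobi identity used above hold in $\Omega_\mfa(M,\F)\otimes\operatorname{End}(\nu\F)$ exactly as in the usual Lie-algebra-valued calculus. Once this framework of \cite[Section 7.2]{Toeben} is in place, the lemma reduces to the three displayed identities.
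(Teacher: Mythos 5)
Your argument is correct and is exactly the standard equivariant Chern--Weil computation that the paper has in mind: the paper itself gives no proof of this lemma, only the citation to \cite[Section 7.2]{Toeben}, where the same route is taken (moment term $-\iota_X\theta$ vanishing at $X=0$, the equivariant Bianchi identity $d_\mfa F^\theta_\mfa=[F^\theta_\mfa,\theta]$ using $L_X\theta=0$, and infinitesimal $\Ad$-invariance of the symmetric polynomial). You also correctly flag the only genuinely foliation-specific point, namely that $\theta$ is the basic, $\mfa$-invariant connection form on the foliated orthonormal frame bundle so that $\iota_X\theta$ and the brackets make sense as basic equivariant forms; nothing further is needed.
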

As a corollary of our ABBV-type localization formula we obtain the following result.
\begin{cor}\label{cor:locchar}
Let $M$ be a compact oriented manifold with an orientable Killing foliation $\F$ of dimension $p$ and codimension $q$. Assume that $\nu \F$ is trivial, and fix a trivialization to define secondary classes. If $\deg h_{I} = p$ and $\deg p_{J} = q$, then we have
\begin{equation}\label{eq:locchar}
\int_{M} h_Ip_J(\F)
=\sum_{k} l_{k}\int_{C_{k}/\F} \frac{i^*_{C_{k}}p_{J,\mfa}(\nu \F, \F)}{e_{\mfa}(\nu C_{k}, \F)},
\end{equation}
where $i_{C_{k}} : C_{k} \to M$ are the connected components of the submanifold of $M$ given by the union of closed leaves, $L_{k}$ is a leaf of $\F|C_{k}$ without holonomy and $l_{k}=\int_{L_{k}}h_{I}(\F)$. In particular, if $(M,\F)$ has only finitely many closed leaves $L_{1}$, $\ldots$, $L_{N}$, then we have
\begin{equation}\label{eq:locchar2}
\int_{M} h_Ip_J(\F)
=\sum_{k} \left( \int_{L_{k}} h_{I}(\F) \right) \int_{L_{k}} \frac{i^*_{L_{k}}p_{J,\mfa}(\nu \F, \F)}{i^{*}_{L_{k}}p_{e,\mfa}(\nu \F, \F)}
\end{equation}
and $\int_{M} h_Ip_{e}(\F) = \sum_{k} \int_{L_{k}} h_{I}(\F)$.
\end{cor}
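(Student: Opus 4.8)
The plan is to obtain the result as a direct application of Theorem~\ref{thm:loc}, the only genuinely new ingredient being a degree count that identifies the equivariant transverse integral with the ordinary characteristic number on the left-hand side. First I would check that the hypotheses of Theorem~\ref{thm:loc} are met: since $M$ is compact, $\F$ is automatically transversely complete and its space of leaf closures is compact, and every form on $M$ is compactly supported; by assumption $\F$ is an orientable Killing foliation on the oriented manifold $M$, with structural Killing algebra $\mfa$. I then take $\eta = h_I(\F)$, which is a relatively closed $p$-form by Lemma~\ref{lem:hI} (here the hypothesis $\deg h_I = p$ is used), and $\sigma = p_{J,\mfa}(\nu\F,\F) \in H_\mfa(M,\F)$, which is a well-defined equivariant basic class by Lemma~\ref{lem:equivariantpJ}.

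The key step is to show that with these choices the left-hand side $\int_{(\F,\eta)}\sigma$ of Theorem~\ref{thm:loc} equals the ordinary characteristic number $\int_M h_I p_J(\F)$. I would argue this by a degree count. Decompose the equivariant form according to polynomial degree as $\sigma = \sum_{u \geq 0}\sigma^{(u)}$ with $\sigma^{(u)} \in (S^u(\mfa^*)\otimes\Omega^{q-2u}(M,\F))^\mfa$, using that $\sigma$ has total equivariant degree $q = \deg p_J$ and that $\sigma^{(0)} = p_J(\nu\F,\F)$. Since $\eta$ has degree $p$, the form $\eta\wedge\sigma^{(u)}$ has degree $p + (q-2u) = n - 2u$, which can contribute to the integral over the $n$-dimensional $M$ only when $u = 0$. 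Hence
\[
\int_{(\F,\eta)}\sigma = \int_M h_I(\F)\wedge p_J(\nu\F,\F) = \int_M h_I p_J(\F),
\]
a constant in $S(\mfa^*)$. Feeding $\eta$ and $\sigma$ into Theorem~\ref{thm:loc} and using $l_k = \int_{L_k}h_I(\F)$ then yields \eqref{eq:locchar}; note that although the right-hand side is formed in the fraction field of $S(\mfa^*)$, the equality forces it to reduce to this constant.

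For the special cases I would specialize to the situation where $\F$ has only finitely many closed leaves $L_1,\ldots,L_N$. Then each connected component $C_k$ of the union of closed leaves is a single closed leaf $L_k$, whose normal bundle in $M$ coincides with $\nu\F|_{L_k}$; consequently $e_\mfa(\nu C_k,\F) = i^*_{L_k}p_{e,\mfa}(\nu\F,\F)$, and the orbifold integral over the point $C_k/\F$ reduces to evaluation of the restricted class, which gives \eqref{eq:locchar2}. Finally, setting $J = \{e\}$ so that $p_J = p_e$ makes the integrand $i^*_{L_k}p_{e,\mfa}(\nu\F,\F)/i^*_{L_k}p_{e,\mfa}(\nu\F,\F)$ equal to $1$, and the sum collapses to $\sum_k\int_{L_k}h_I(\F)$.

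I do not expect a serious obstacle here: essentially all of the analytic content sits in Theorem~\ref{thm:loc} together with Lemmas~\ref{lem:hI} and~\ref{lem:equivariantpJ}, which I am free to invoke. The one point that must be handled carefully is the degree bookkeeping above, which is exactly where the hypothesis $\deg p_J = q$ (equal to the full codimension) is essential: it is what guarantees that only the polynomial-degree-zero part of the equivariant extension $p_{J,\mfa}(\nu\F,\F)$ survives the transverse integration, so that the left-hand side is the genuine secondary characteristic number $\int_M h_I p_J(\F)$ rather than a higher-order equivariant quantity.
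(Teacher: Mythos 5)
Your proposal is correct and follows essentially the same route as the paper: take $\eta=h_I(\F)$ (relatively closed by Lemma~\ref{lem:hI}), $\sigma=p_{J,\mfa}(\nu\F,\F)$ (by Lemma~\ref{lem:equivariantpJ}), feed them into Theorem~\ref{thm:loc}, and in the isolated-leaf case use $\nu L_k=i^*_{L_k}\nu\F$ so that $e_\mfa(\nu L_k,\F)=i^*_{L_k}p_{e,\mfa}(\nu\F,\F)$. The only difference is that you spell out the polynomial-degree bookkeeping showing $\int_{(\F,\eta)}p_{J,\mfa}(\nu\F,\F)=\int_M h_Ip_J(\F)$, which the paper leaves implicit; that is a correct and welcome clarification, not a deviation.
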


\begin{proof}
By Lemma~\ref{lem:hI}, we have $\int_{M} h_Ip_J(\F) =\int_{(\F, h_I(\F))} p_J(\nu \F, \F)$. Then \eqref{eq:locchar} follows from Lemma~\ref{lem:equivariantpJ} and Theorem \ref{thm:loc}. If $(M,\F)$ has only finitely many closed leaves $L_{1}$, $\ldots$, $L_{N}$, then we have $\nu L_{k} = i^{*}_{L_{k}}\nu \F$ for $k=1$, $\ldots$, $N$. Because $e_\mfa(\nu L_{k}, \F)=i^*_{L_{k}}p_{e,\mfa}(\nu \F, \F)$, we obtain~\eqref{eq:locchar2} from  \eqref{eq:locchar}. The last equation follows from $\frac{i^*_{L_{k}}p_{e,\mfa}(\nu \F, \F)}{i^{*}_{L_{k}}p_{e,\mfa}(\nu \F, \F)}=1$.
\end{proof}

\subsection{The case of transversely K\"{a}hler flows}

Transversely K\"{a}hler foliations are defined in a similar way as for Riemannian foliations, and their secondary classes are richer than those of Riemannian foliations. A {\em transversely K\"{a}hler Haefliger cocycle} of complex codimension $m$ on a manifold $M$ is a quintuple $(\{U_i\},\{g_{i}\},\{\omega_i\},\{\pi_i\},\{\gamma_{ij}\})$ consisting of
\begin{enumerate}
\item an open covering $\{U_i\}$ of $M$,
\item submersions $\pi_i:U_i\to \RR^{2m}$,
\item Riemannian metrics $g_{i}$ on $\pi_{i}(U_{i})$,
\item symplectic form $\omega_{i}$ on $\pi_{i}(U_{i})$ such that $g_{i}$ and $\omega_{i}$ determine an integrable complex structure $J_i$ on $\pi_{i}(U_{i})$ by the equation $g_{i}(X,J_{i}Y) = \omega_{i}(X,Y)$,
\item transition maps $\gamma_{ij}:\pi_j(U_i\cap U_j)\to \pi_i(U_i\cap U_j)$ such that $\pi_i=\gamma_{ij}\circ \pi_j$, $\gamma_{ij}^{*}g_{i}=g_{j}$ and $\gamma_{ij}^{*}\omega_{i} = \omega_{j}$.
\end{enumerate}
The invariance of $\omega_{i}$ under transition maps implies that $\omega_{i}$ yields a $2$-form $\omega$ on $M$, which is called the {\em transverse K\"{a}hler form}. A transversely K\"{a}hler foliation is defined to be an equivalence class of transversely K\"{a}hler Haefliger cocycles, where the equivalence relation is defined by union as in the case of Riemannian foliations.

The equivalence relation of transversely K\"{a}hler Haefliger cocycles is defined as in the case of Riemannian foliations.

Two Riemannian Haefliger cocycles on $M$ are said to be equivalent if their union is a Riemannian Haefliger cocycle on $M$. A {\em Riemannian foliation} of codimension $q$ is defined to be an equivalence class of Riemannian Haefliger cocycles of codimension $q$.

Here we recall the characteristic classes of transversely K\"{a}hler foliations constructed by Matsuoka-Morita. Let $(M,\F)$ be a transversely K\"{a}hler foliation of complex codimension $m$ with framed normal bundle $\nu \F$. We have the eigenspace bundle decomposition $\nu_{\CC} \F = \nu^{1,0}\F \oplus \nu^{0,1} \F$ of the complex structure. Let $\pr : T_{\CC}M \to \nu_{\CC} \F$ be the projection and $E=\pr^{-1}(\nu^{0,1}\F)$. Take a bundle-like metric on $TM$ which induces the underlying transverse metric under the identification $(T\F)^{\perp} \cong \nu \F$. Define a connection $\nabla$ on $\nu^{1,0} \F$ by
\begin{equation}\label{eq:nabla}
\nabla_{X} Y=
\begin{cases}
\pr [X,\widetilde{Y}] & X\in C^\infty(T_\CC\F), Y\in C^{\infty}(\nu^{1,0})\\
\pr(\nabla^{LC}_{X}\widetilde{Y}) & X\in C^{\infty}(\nu_\CC\F), Y\in C^{\infty}(\nu^{1,0})
\end{cases}
\end{equation}
where $\pr(\widetilde{X}) = X$ and $\pr(\widetilde{Y}) = Y$. Since $E$ is involutive, $\nabla$ is well defined.

\begin{rem}
One can show that $\nabla_{X} Y= \pr [X,\widetilde{Y}]$ for $X \in C^\infty(E)$ and $Y\in C^{\infty}(\nu^{1,0})$ by using the fact that the $(0,1)$-part of the Levi-Civita connection of K\"{a}hler manifolds is equal to $\overline{\partial}$. Namely, $\nabla$ in~\eqref{eq:nabla} is a complex Bott connection in the sense of \cite[Definition 1.1.7]{Asuke3}.
\end{rem}

\begin{lemma}
The curvature form $F_{\nabla}$ of $\nabla$ is a basic $(1,1)$-form valued in $\operatorname{End}(\nu^{1,0}\F)$.
\end{lemma}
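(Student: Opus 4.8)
The plan is to identify $\nabla$ with the transverse Chern connection of the transversely holomorphic Hermitian bundle $\nu^{1,0}\F$, and then to establish the three assertions — values in $\operatorname{End}(\nu^{1,0}\F)$, basicness, and type $(1,1)$ — one at a time. The first is immediate: since $\nabla$ is by construction a connection on the complex bundle $\nu^{1,0}\F$, its curvature $F_\nabla$ automatically takes values in $\operatorname{End}(\nu^{1,0}\F)$, and nothing has to be checked there.

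For basicness I would argue that $\nabla$ is a \emph{foliated} connection. By the first line of \eqref{eq:nabla} (equivalently, by the complex Bott property of the preceding Remark) one has $\nabla_V s=\pr[V,\widetilde s]=0$ for every $V\in C^\infty(T_\CC\F)$ and every projectable (holonomy-invariant) local section $s$ of $\nu^{1,0}\F$; such sections form a local frame, coming from transverse holomorphic coordinate fields. In such a frame the connection form $\omega$ satisfies $\iota_V\omega=0$ for leafwise $V$, while the entries $\omega(W)$ for a foliated transverse field $W$ are leafwise constant because the projected Levi-Civita piece $\pr\nabla^{LC}$ in the second line of \eqref{eq:nabla} is itself foliated (this is the transversely K\"ahler analogue of the canonical basic connection recalled in Section~\ref{sec:charclass}). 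A short computation with $F_\nabla=d\omega+\omega\wedge\omega$ then gives $\iota_V F_\nabla=0$ and $L_V F_\nabla=0$ for all $V\in\Xi(\F)$, i.e.\ $F_\nabla$ is basic.

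The heart of the proof is the type $(1,1)$ statement, which I would split into the vanishing of $F_\nabla^{0,2}$ and of $F_\nabla^{2,0}$. For $F_\nabla^{0,2}$ I would evaluate $F_\nabla$ on two sections of $\nu^{0,1}\F$, lift them to vector fields $X,Z\in C^\infty(E)$ (possible since $E=\pr^{-1}(\nu^{0,1}\F)$), and use the complex Bott formula $\nabla_X Y=\pr[X,\widetilde Y]$ valid on all of $E$. Choosing the lift $\widetilde{\nabla_Z Y}=[Z,\widetilde Y]$ and using involutivity of $E$ (so that $[X,Z]\in C^\infty(E)$ and the Bott formula again applies to $\nabla_{[X,Z]}Y$), the three terms of $F_\nabla(X,Z)Y$ collapse under the Jacobi identity to $\pr\big([X,[Z,\widetilde Y]]-[Z,[X,\widetilde Y]]-[[X,Z],\widetilde Y]\big)=0$; hence $F_\nabla^{0,2}=0$. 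For $F_\nabla^{2,0}$ I would instead use that $\nabla$ is metric: the second line of \eqref{eq:nabla} is the projection of the metric Levi-Civita connection, so $\nabla$ preserves the transverse Hermitian metric on $\nu^{1,0}\F$ and $F_\nabla$ is therefore skew-Hermitian. Decomposing $F_\nabla=F_\nabla^{2,0}+F_\nabla^{1,1}+F_\nabla^{0,2}$ and comparing bidegrees in the skew-Hermiticity relation shows that $F_\nabla^{2,0}$ is determined, up to conjugate-transpose and sign, by $F_\nabla^{0,2}$, so the already-established $F_\nabla^{0,2}=0$ forces $F_\nabla^{2,0}=0$. Combining the two vanishings gives $F_\nabla=F_\nabla^{1,1}$.

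I expect the main obstacle to be the two compatibility checks that make \eqref{eq:nabla} behave like a Chern connection in the transverse setting: that the two cases of the definition glue to a well-defined connection whose $(0,1)$-part is the transverse $\overline{\partial}$ (used implicitly through the Bott formula on $E$), and that the projected Levi-Civita piece is simultaneously metric and foliated. Both reduce, on a foliated chart, to the corresponding statements on the local leaf space — a genuine K\"ahler manifold — but translating them into basic, foliation-invariant statements, while keeping careful track of projectable frames and of the choices of lifts $\widetilde Y$, is where the real work lies.
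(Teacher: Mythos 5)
Your proof is correct, but for the key type $(1,1)$ assertion it takes a genuinely different route from the paper. The paper handles basicness by observing that $\nabla$ is the complexification of the canonical basic connection and citing Tondeur's computation (your projectable-frame argument is essentially that same computation written out), and then proves the $(1,1)$ property by a purely \emph{local} descent: on each chart $U_i$ the connection $\nabla$ is the pull-back under the submersion $\pi_i$ of the Levi-Civita connection of the local K\"ahler quotient $\pi_i(U_i)$, which is the Chern connection of $T^{1,0}\pi_i(U_i)$ and hence has $(1,1)$ curvature; therefore $F_\nabla|_{U_i}$ is the pull-back of a $(1,1)$-form. You instead give the \emph{intrinsic} Chern-connection argument transplanted to the foliated setting: $F_\nabla^{0,2}=0$ from the Bott formula on the involutive bundle $E$ together with the Jacobi identity (the transverse analogue of $\overline{\partial}^2=0$), and $F_\nabla^{2,0}=0$ from metric compatibility via the skew-Hermitian symmetry of the curvature. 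Both arguments are valid. The paper's descent argument is shorter and outsources all the analytic content to the classical K\"ahler fact, at the cost of the (easy but unstated) verification that $\nabla$ really is $\pi_i$-related to $\nabla^i$ on basic fields; your argument avoids the local quotient entirely and makes visible exactly which structural ingredient is responsible for each vanishing, at the cost of having to verify metric compatibility of $\nabla$. On that last point, be a little careful: the projected Levi-Civita piece only gives compatibility in the transverse directions, while in the leafwise directions compatibility comes from holonomy-invariance of the transverse Hermitian metric ($L_Vh=0$ for $V\in\Xi(\F)$); since basicness and the $E$-directional vanishing already dispose of all leafwise contractions, the transverse compatibility suffices for the $F_\nabla^{2,0}$ step, but this should be said explicitly.
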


\begin{proof}
Here $\nabla$ obtained by complexifying the canonical basic connection in the sense of~\cite[p.~21]{Tondeur}. Thus, by the same computation as~\cite[Proof of Proposition 3.6]{Tondeur}, one sees that $F_{\nabla}$ is a basic $2$-form valued in $\operatorname{End}(\nu^{1,0}\F)$. The fact that $F_{\nabla}$ is of degree $(1,1)$ follows from a local argument. Take any element $\pi_i:U_i\to \RR^{2m}$ of the Haefliger cocycle. Here $\pi_{i}(U_{i})$ has a K\"{a}hler structure $(g_{i},\omega_{i})$. It is well known that the Levi-Civita connection $\nabla^{i}$ of $g_{i}$ on $T^{1,0}\pi_{i}(U_{i})$ is a Chern connection, which implies that the curvature form $F_{\nabla^{i}}$ is of type $(1,1)$ by well known argument. By the definition of $\nabla$, it is easy to see that we have $\pi_{i}^*\nabla_X Y=\nabla^{i}_{(\pi_{i})_{*}X}(\pi_{i})_{*}Y$ for basic $X$, $Y\in C^{\infty}(T_{\CC}M)$, namely, $\nabla$ is the pull back of $\nabla^{i}$ by $\pi_{i}$. Thus we have that the curvature form $F_{
 \nabla}$ of $\nabla$ is the pull back of that of $\nabla^{i}$. Then it follows that $F_{\nabla}|_{U_{i}}$ is of degree $(1,1)$.
\end{proof}

By the last lemma, the Chern forms of $\nabla$ of degree greater than $m$ are trivial. Thus the Chern-Weil homomorphism of $\nabla$ induces $\delta : H(KW_{m}) \longrightarrow H(U(\nu^{1,0}\F);\RR)$, where $U(\nu^{1,0}\F)$ is the unitary frame bundle of $\nu^{1,0}\F$ and
\[
KW_{m} = \bigwedge (u_1, \ldots, u_{m}) \otimes \left( \RR[s_1, \ldots, s_{m}, \Phi]/\{P \mid \deg P> m\} \right)
\]
is a dga with differential $d$ determined by $du_{i} = s_{i}$, $ds_{i} = 0$ and $d\Phi = 0$. Here $\Delta_{\F}(\Phi)$ is equal to the transverse K\"{a}hler form of $\F$. Then, as in the last section, the characteristic homomorphism
\[
\Delta_{\F} : H(KW_{m}) \longrightarrow H(M;\RR)
\]
is obtained by the composite of $\widehat{\delta}$ and $s^{*} : H(U(\nu^{1,0}\F);\RR) \to H(M;\RR)$, where $s$ is the section induced by the trivialization of $\nu^{1,0}(\F)$. 
We use the multi-index notation $u_{I}s_{J}$ as in the introduction and will denote $s_{J}(\nu \F, \F)=\Delta_{\F}(s_{J})$, $u_{I}(\F)=\Delta_{\F}(u_{I})$ and $ u_{I}s_{J}(\F)=\Delta_{\F}(u_{I}s_{J})$.

\begin{rem}
The original construction of Matsuoka-Morita of these classes was based on the canonical affine Cartan connection of $\nu^{1,0} \F$. They proved that it is equivalent to the construction above.
\end{rem}

Here $s_{1}(\nu^{1,0} \F) = \frac{\sqrt{-1}}{2\pi} \tr (F_{\nabla})$ is equal to the first Chern class of $\wedge^{m,0}\nu^{*} \F$. This fact allows us to define $u_{1}(\F)$ for a transversely K\"{a}hler foliation with trivialized $\wedge^{m,0}\nu^{*} \F$ but not necessarily framed normal bundle.

\begin{defn}\label{def:u1}
Let $(M,\F)$ be a manifold with a transversely K\"{a}hler foliation of complex codimension $m$ such that $\wedge^{m,0}\nu^{*} \F$ is trivial as a topological line bundle. Let $\varphi$ be a trivialization of $U(1)$-bundle $U(\wedge^{m,0}\nu^{*} \F) \to M$ associated to $\wedge^{m,0}\nu^{*} \F$. Then we define $u_{1}(\F)$ by
\[
u_{1}(\F) =  \frac{\sqrt{-1}}{2\pi} \varphi^{*} \theta,
\]
where $\theta \in \Omega^{1}(U(\wedge^{m,0}\nu^{*} \F)) \otimes \mathfrak{u}(1)$ is the connection form of the Chern connection on $\wedge^{m,0}\nu^{*} \F$ induced from $\nu^{1,0}\F$.
\end{defn}

\begin{rem}
The characteristic class $u_{1}s_{1}^{m}$ is called the Bott class. The Bott class of transversely holomorphic foliations is defined as a cohomology class of coefficient $\CC/\ZZ$ even for transversely holomorphic foliations $(M,\F)$ with nontrivial $\wedge^{m,0}\nu^{*} \F$ (see \cite{Asuke}).
\end{rem}

It is easy to see that we have $du_{1}(\F) = s_{1}(\nu \F, \F)$. Then, in the case where $\dim \F=1$, we see that $u_{1}(\F)$ is also relatively closed like as in Lemma~\ref{lem:hI}. Therefore we obtain the following corollary of Theorem~\ref{thm:loc}.

\begin{cor}\label{cor:trK2}
Let $(M,\F)$ be a compact manifold with an orientable taut transversely K\"{a}hler foliation of dimension one and complex codimension $m$. Assume that $\wedge^{m,0}\nu^{*} \F$ is trivial as a topological line bundle and $(M,\F)$ admits only finite closed leaves $L_{1}$, $\ldots$, $L_{N}$. For a given multiindex $J=(j_1,\ldots,j_l)$ with $j_1+\cdots+j_l= 2m$ we have
\begin{equation}\label{eq:u1sJloc}
\int_{M} u_{1} s_{J}(\F) = \sum_{k=1}^{N} \left(\int_{L_{k}} u_{1}(\F)\right) \frac{i^*_{L_{k}}s_{J,\mfa}(\nu \F, \F)}{i^{*}_{L_{k}}s_{m,\mfa}(\nu \F, \F)},
\end{equation}
where the $L_{k}$ are the isolated closed leaves of $\F$ and $s_{J,\mfa}(\nu \F, \F)$ is the equivariant characteristic form of $\F$ associated to $s_{J}$.
In particular, in the case where $J=\{m\}$, we obtain
\begin{equation}\label{eq:u1sm}
\int_{M} u_{1} s_{m}(\F) = \sum_{k} \int_{L_{k}}u_{1}(\F).
\end{equation}
\end{cor}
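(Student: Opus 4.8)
The plan is to run the proof of Corollary~\ref{cor:locchar} verbatim, replacing the relatively closed form $h_I(\F)$ by $u_1(\F)$, the basic Pontryagin form $p_J(\nu\F,\F)$ by the basic Chern form $s_J(\nu\F,\F)$, and the equivariant basic Euler form of the real normal bundle by the equivariant basic top Chern form. The three ingredients I need are: that $u_1(\F)$ is relatively closed, that $s_J$ admits an equivariant basic extension, and that the equivariant basic Euler form of $\nu L_k$ is identified with the top equivariant Chern form.

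First I would record that $u_1(\F)$ is relatively closed. Since $\dim\F=1$ and $du_1(\F)=s_1(\nu\F,\F)$ is basic (hence $\iota_X\,du_1(\F)=0$ for every $X$ tangent to $\F$), the defining condition holds exactly as in Lemma~\ref{lem:hI}. As $M$ is compact, $u_1(\F)$ is an admissible relatively closed $p$-form with $p=1$, and by definition of the transverse integration operator and the fact that $s_J(\nu\F,\F)\in\Omega^q(M,\F)$ is basic of top transverse degree $q=2m$, I may rewrite
\[
\int_M u_1 s_J(\F)=\int_M u_1(\F)\wedge s_J(\nu\F,\F)=\int_{(\F,u_1(\F))} s_J(\nu\F,\F).
\]

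Second I would build the equivariant extension $s_{J,\mfa}(\nu\F,\F)$. Being taut, orientable, one-dimensional and Riemannian on a compact manifold, $\F$ is Killing, with structural Killing algebra $\mfa$ acting transversely and by transverse isometries. Because $J$ is parallel for the transverse Levi-Civita connection, these transverse isometries preserve the transverse holomorphic structure, so $\mfa$ preserves $\nu^{1,0}\F$ and its canonical basic connection $\nabla$. Feeding the $\mfa$-invariant connection form $\theta$ into the equivariant Chern--Weil homomorphism and forming $F^\theta_\mfa=d_\mfa\theta+\tfrac12[\theta,\theta]$ produces equivariant basic Chern forms $s_{J,\mfa}(\nu\F,\F)$ which, precisely as in Lemma~\ref{lem:equivariantpJ}, satisfy $d_\mfa s_{J,\mfa}(\nu\F,\F)=0$ and $s_{J,\mfa}(\nu\F,\F)(0)=s_J(\nu\F,\F)$, hence represent classes in $H_\mfa(M,\F)$ extending the basic ones. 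Then I apply Theorem~\ref{thm:loc} with $\eta=u_1(\F)$ and $\sigma=s_{J,\mfa}(\nu\F,\F)$. Since the closed leaves are isolated, each component $C_k$ equals $L_k$ with $\nu L_k=i^*_{L_k}\nu\F$, giving
\[
\int_{(\F,u_1(\F))} s_J(\nu\F,\F)=\sum_k\left(\int_{L_k}u_1(\F)\right)\frac{i^*_{L_k}s_{J,\mfa}(\nu\F,\F)}{e_\mfa(\nu L_k,\F)}.
\]
Because the transverse Kähler structure makes $\nu\F$ a complex bundle compatible with the orientation, its top Chern form equals the Euler form of the underlying oriented real bundle, equivariantly as well, so $e_\mfa(\nu L_k,\F)=i^*_{L_k}s_{m,\mfa}(\nu\F,\F)$; this yields~\eqref{eq:u1sJloc}. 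For $J=\{m\}$ the numerator and denominator coincide, each fraction equals $1$, and the sum collapses to $\sum_k\int_{L_k}u_1(\F)$, which is~\eqref{eq:u1sm}.

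The main obstacle is the second step, namely checking that equivariant Chern--Weil genuinely lands in the equivariant basic complex. Concretely this reduces to the $\mfa$-invariance of $\nabla$, i.e. to the fact that the transverse action preserves not only the transverse metric (automatic, since $\mfa$ consists of transverse Killing fields) but also the transverse complex structure. Granting that $J$ is $\mfa$-invariant, the verification that $s_{J,\mfa}(\nu\F,\F)$ is equivariantly closed is the standard equivariant Chern--Weil computation carried out in the basic complex, identical in form to Lemma~\ref{lem:equivariantpJ}; the remaining identification of the equivariant basic Euler form with the top equivariant Chern form is the usual complex-linear-algebra statement transported to the transverse setting.
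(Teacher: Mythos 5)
Your proposal is correct and follows essentially the same route as the paper: observe that $u_1(\F)$ is relatively closed since $du_1(\F)=s_1(\nu\F,\F)$ is basic, note that the foliation is Killing by Molino--Sergiescu, apply Theorem~\ref{thm:loc} with $\eta=u_1(\F)$ and the equivariant Chern--Weil extension $s_{J,\mfa}(\nu\F,\F)$, and identify $e_\mfa(\nu L_k,\F)$ with $i^*_{L_k}s_{m,\mfa}(\nu\F,\F)$. The paper's proof is just a terser version of the same argument; your extra care about the $\mfa$-invariance of the transverse complex structure fills in a point the paper leaves implicit.
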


\begin{proof}
We first remark that an orientable taut transversely K\"{a}hler foliation of dimension one is a Killing foliation by~\cite[Th\'{e}or\`{e}me~A]{MolSer}. Since the equivariant Euler class of $\nu L_{k}$ is equal to $i^{*}_{L_{k}}s_{m,\mfa}(\nu \F, \F)$, Eq.~\eqref{eq:u1sJloc} is a direct consequence of Theorem~\ref{thm:loc}. Then Eq.~\eqref{eq:u1sm} follows from $\frac{i^{*}_{L_{k}}s_{m,\mfa}(\nu \F, \F)}{i^{*}_{L_{k}}s_{m,\mfa}(\nu \F, \F)} = 1$.
\end{proof}

Below we will apply Corollary~\ref{cor:trK2} to an example considered previously by Bott, Baum-Bott and Asuke. We will need the following well known computation of $u_{1}(\F)$.

\begin{lemma}\label{lem:u1}
Let $(M,\F)$ be a manifold with a transversely K\"{a}hler foliation of complex codimension $m$. Let $\varphi$ be a trivialization of $\wedge^{m,0}\nu^{*} \F$, which is regarded as an $m$-form on $M$. Assume that a $1$-form $\alpha$ on $M$ satisfies $d\varphi = 2\pi \sqrt{-1} \alpha \wedge \varphi$. If $\varphi$ is induced from a trivialization of $\nu^{1,0}\F$, then we have $\alpha(X) = u_{1}(\F)(X)$ for any tangential vector field $X\in C^\infty(T_\CC\F)$.
\end{lemma}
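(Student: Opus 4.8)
The plan is to reduce the statement to the leafwise directions and to exploit that $\varphi$, being a transverse $(m,0)$-form, is annihilated by contraction with leafwise vectors. Fix $X\in C^\infty(T_\CC\F)$. Since $\varphi$ is pulled back from the local leaf space via $\pr$, we have $\iota_X\varphi=0$, so Cartan's formula gives $L_X\varphi=\iota_X\,d\varphi$. Inserting the defining relation $d\varphi=2\pi\sqrt{-1}\,\alpha\wedge\varphi$ and using $\iota_X\varphi=0$ a second time yields
\[
L_X\varphi=\iota_X\,d\varphi=2\pi\sqrt{-1}\,\bigl(\alpha(X)\,\varphi-\alpha\wedge\iota_X\varphi\bigr)=2\pi\sqrt{-1}\,\alpha(X)\,\varphi .
\]
In this way $\alpha(X)$ is read off from the leafwise Lie derivative of $\varphi$, and in particular $L_X\varphi$ is seen to be a multiple of $\varphi$.

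Next I would identify this Lie derivative with a covariant derivative. The key observation is that along leafwise directions the connection $\nabla$ of \eqref{eq:nabla} is exactly the Bott connection $\nabla_X Y=\pr[X,\widetilde Y]$, and that for a transverse form the Lie derivative $L_X$ along a leafwise $X$ agrees with the induced Bott covariant derivative. Indeed, for a transverse $(1,0)$-form $\beta$ and a transverse field $Y$, involutivity of $T\F$ gives $(L_X\beta)(\widetilde Y)=X(\beta(\widetilde Y))-\beta([X,\widetilde Y])=(\nabla_X\beta)(Y)$, and this extends by the Leibniz rule to the $m$-fold product $\varphi=e^1\wedge\cdots\wedge e^m$ built from the coframe $\{e^i\}$ dual to the frame $\{e_i\}$ of $\nu^{1,0}\F$ that induces $\varphi$. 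Writing $\omega=(\omega^i_{\,j})$ for the connection matrix of $\nabla$ in this frame, the induced connection on $\wedge^{m,0}\nu^*\F=\det(\nu^{1,0}\F)^*$ then gives $L_X\varphi=\nabla_X\varphi=-(\tr\omega)(X)\,\varphi$.

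Combining the two expressions for $L_X\varphi$ yields $2\pi\sqrt{-1}\,\alpha(X)=-(\tr\omega)(X)$, that is $\alpha(X)=\tfrac{\sqrt{-1}}{2\pi}(\tr\omega)(X)$, where I have used $\tfrac{-1}{2\pi\sqrt{-1}}=\tfrac{\sqrt{-1}}{2\pi}$. It then remains to recognize the right-hand side as $u_1(\F)(X)$: by Definition~\ref{def:u1} together with the transgression relation $d\,u_1(\F)=s_1(\nu\F,\F)=\tfrac{\sqrt{-1}}{2\pi}\tr F_\nabla$, the form $u_1(\F)$ is the primitive of $\tfrac{\sqrt{-1}}{2\pi}\tr F_\nabla$ determined by the chosen trivialization, so that $u_1(\F)(X)=\tfrac{\sqrt{-1}}{2\pi}(\tr\omega)(X)$ on leafwise $X$. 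This gives $\alpha(X)=u_1(\F)(X)$, as claimed.

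The main obstacle is the sign-and-normalization bookkeeping in this final identification. One must pin down the conventions in Definition~\ref{def:u1} (connection on $\wedge^{m,0}\nu^*\F$ versus its dual, and the normalization forced by $d\,u_1(\F)=s_1(\nu\F,\F)$) precisely enough that the two potential sign flips — the dual/determinant sign in $L_X\varphi=-(\tr\omega)(X)\varphi$ and the algebraic identity $\tfrac{-1}{2\pi\sqrt{-1}}=\tfrac{\sqrt{-1}}{2\pi}$ — cancel, yielding $+u_1(\F)(X)$ rather than its negative. All the remaining content is the formal manipulation above; the only genuine geometric input is the identification of $L_X$ with the Bott covariant derivative on transverse forms, which rests on the involutivity of $T\F$ and on the fact that the tangential part of $\nabla$ in \eqref{eq:nabla} is precisely the Bott connection.
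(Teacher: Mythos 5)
Your argument is correct and is essentially the paper's proof in a different packaging: the paper evaluates $d\varphi(X,Y_1,\ldots,Y_m)$ by the Koszul expansion and identifies the surviving term with $-\tr\kappa(X)$ via $\pr[X,Y_i]=\nabla_X Z_i$, which is exactly your computation of $\iota_X d\varphi=L_X\varphi=-(\tr\omega)(X)\varphi$ phrased through Cartan's formula and the Bott connection. The final identification $u_1(\F)=\tfrac{\sqrt{-1}}{2\pi}\tr\kappa$ (with the sign bookkeeping you flag) is handled in the paper at the same level of detail, by citing \cite[Eq.~(3.5)]{ChernSimons}.
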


\begin{proof}
Since we will prove a local formula, we can assume that $\nu^{1,0}\F$ is trivialized with a trivialization $Z_{1}$, $\ldots$, $Z_{m}$ such that $\varphi(Z_{1}, \ldots, Z_{m})=1$. Let $\pr$ be the canonical projection $\pr : T_{\CC}M \to \nu^{1,0}\F$. Take $Y_{i}\in C^{\infty}(T_{\CC}M)$ so that $\pr Y_{i} = Z_{i}$. Recall that $E$ denotes the vector subbundle of $T_{\CC}M$ such that $T_{\CC}M/E = \nu^{1,0}\F$. For $X \in C^\infty(T_\CC\F)$, we have
\begin{align}
2\pi \sqrt{-1} \alpha(X) = & d\varphi (X, Y_{1}, \ldots, Y_{m}) \notag  \\
= & X \varphi (Y_{1}, \ldots, Y_{m}) + \sum_{i=1}^{m} (-1)^{i}  Y_{i} \varphi (X,Y_{1}, \ldots, \widehat{Y}_{i}, \ldots, Y_{m}) \notag \\
& + \sum_{1 \leq i < j \leq m} (-1)^{i+j}  \varphi ([Y_{i},Y_{j}], X, Y_{1}, \ldots, \widehat{Y}_{i}, \ldots, \widehat{Y}_{j}, \ldots, Y_{m}) \notag \\
& + \sum_{i=1}^{m} (-1)^{i} \varphi ([X,Y_{i}], Y_{1}, \ldots, \widehat{Y}_{i}, \ldots, Y_{m}) \label{eq:Bottclass}
\end{align}
Here the first three terms are zero. Since $\nabla_{X}Z_{i} = \pr[X,Y_{i}]$, we have
\[
\pr[X,Y_{i}] = \nabla_{X}Z_{i} = \sum_{j=1}^{m} \kappa_{ji}(X)Z_{j},
\]
where $\kappa$ is the connection form of $\nabla$ with respect to the framing $\{Z_i\}$. Thus it follows that the right hand side of~\eqref{eq:Bottclass} is equal to
\begin{multline*}
\sum_{i=1}^{m} (-1)^{i} \varphi ([X,Y_{i}], Y_{1}, \ldots, \widehat{Y}_{i}, \ldots, Y_{m}) =
\\ \sum_{i=1}^{m} (-1)^{i} \varphi (\kappa_{ii}(X)Y_{i}, Y_{1}, \ldots, \widehat{Y}_{i}, \ldots, Y_{m}) = - \sum_{i=1}^{m} \kappa_{ii}(X) = - \tr \kappa(X).
\end{multline*}
Since $u_{1}(\F) = \frac{\sqrt{-1}}{2\pi} \tr \kappa$ by~\cite[Eq. (3.5)]{ChernSimons}, one obtains that $\alpha(X) = u_{1}(\F)(X)$.
\end{proof}
Let $w=(w_0,\ldots,w_m) \in (\RR_{>0})^{m+1}$, and consider the deformation of type I of the standard Sasakian structure on $S^{2m+1}$ whose Reeb vector field $\xi_{w}$ is given by
\[
\xi_w = \sum_{i=0}^{m} w_i \left( x_i \frac{\partial}{\partial y_i} - y_i \frac{\partial}{\partial x_i} \right).
\]
Let $\F_{w}$ be the orbit foliation of the Reeb flow. Then $\F_{w}$ admits a transversely K\"{a}hler structure induced from the Sasakian structure. We choose $w$ generically so that the only closed leaves of $\F_w$ are those given by $|z_i|=1$, $i=0,\ldots,m$. We denote these by $L_0,\ldots,L_{m}$, respectively. We embed $S^{2m+1}$ to $\CC^{m+1}$ as a unit sphere. Then
\[
\sigma = \sqrt{-1}\sum_{j=0}^{m} (-1)^j w_jz_jdz_{0}\wedge \cdots \wedge \widehat{dz_j}\wedge \cdots \wedge dz_m
\]
is a nowhere vanishing global section of $\wedge^{m,0}\nu^{*} \F_{w}$. The trivialization of $\nu^{1,0} \F_{w}$ by $\sigma$ now gives to secondary classes. The following result is due to Bott \cite[p.\ 76]{Bott0}, Baum-Bott \cite[Example 1 in Section 11]{BaumBott} and Asuke \cite[Example 5.6]{Asuke3}. Note that in these references the authors consider transversely holomorphic foliations allowing $w$ to be complex.
The difference in techniques is as follows. The computation of Baum-Bott and Asuke is based on localization of the Chern classes $s_{1}s_{J}$ to the singular point of the foliation extended to $\CC^{n+1}$, while our computation is based on localization to the closed leaves of $\F$, which is an intrinsic process.

\begin{cor}\label{cor:AsukeExample}
We have
\begin{equation}
\int_{S^{2m+1}} u_{1}s_{J}(\F_{w}) = \frac{s_1s_J}{s_{m+1}}(w_0,\ldots,w_m).
\end{equation}
\end{cor}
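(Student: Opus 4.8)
The plan is to specialize Corollary~\ref{cor:trK2} to the foliation $\F_w$ on $S^{2m+1}$ and to evaluate the two ingredients on its right-hand side: the periods $\int_{L_k}u_1(\F_w)$ and the ratios $i^*_{L_k}s_{J,\mfa}/i^*_{L_k}s_{m,\mfa}$, which are determined by the transverse isotropy weights at the closed leaves $L_0,\dots,L_m$. First I would check that $\F_w$ satisfies the hypotheses: it is the orbit foliation of a Reeb field on the standard Sasakian sphere, hence a taut, orientable, transversely K\"ahler foliation of real dimension one, with $\wedge^{m,0}\nu^*\F_w$ trivialized by the explicit section $\sigma$ and, for generic $w$, only the closed leaves $L_0,\dots,L_m$. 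Thus Corollary~\ref{cor:trK2} applies and reduces the problem to the two local computations below; the form-degree condition $\deg s_J=2m$, i.e.\ $\sum_a j_a=m$, will be used at the very end.

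Second, I would compute the period $\int_{L_k}u_1(\F_w)$ using Lemma~\ref{lem:u1}. Writing $\xi_w=\sqrt{-1}\sum_i w_i(z_i\partial_{z_i}-\bar z_i\partial_{\bar z_i})$, a direct differentiation gives $d\sigma=\sqrt{-1}\,s_1(w)\,dz_0\wedge\cdots\wedge dz_m$ with $s_1(w)=\sum_i w_i$, and the defining shape of $\sigma$ yields the clean identity $\iota_{\xi_w}(dz_0\wedge\cdots\wedge dz_m)=\sigma$, hence $\iota_{\xi_w}d\sigma=\sqrt{-1}\,s_1(w)\,\sigma$. Since $\iota_{\xi_w}\sigma=0$, contracting the relation $d\sigma=2\pi\sqrt{-1}\,\alpha\wedge\sigma$ of Lemma~\ref{lem:u1} with $\xi_w$ gives $\alpha(\xi_w)\,\sigma=\tfrac{1}{2\pi\sqrt{-1}}\iota_{\xi_w}d\sigma=\tfrac{s_1(w)}{2\pi}\sigma$, so $u_1(\F_w)(\xi_w)\equiv \tfrac{s_1(w)}{2\pi}$. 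On $L_k$ one has $\xi_w=w_k\partial_\theta$, so the closed orbit has period $2\pi/w_k$, and therefore $\int_{L_k}u_1(\F_w)=s_1(w)/w_k$.

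Third, I would transport the weight computation of Corollary~\ref{cor:voldeformedsphere} verbatim: at $L_k$ the transverse $\mfa$-representation acts on $\nu^{1,0}L_k=\bigoplus_{j\ne k}\CC\partial_{z_j}$, and for $v=\sum_i\beta_i e_i$ the weights are $\alpha^k_j(v+\RR b_w)=\tfrac{\beta_k}{w_k}w_j-\beta_j=w_j(t_k-t_j)$, where $t_i:=\beta_i/w_i$. Because $i^*_{L_k}s_{a,\mfa}$ is the $a$-th elementary symmetric polynomial in these weights and $i^*_{L_k}s_{m,\mfa}$ is their product, substituting into Corollary~\ref{cor:trK2} and cancelling the common factors $s_1(w)$ and $\prod_i w_i$ reduces the claim to the purely combinatorial identity
\[
\sum_{k=0}^m\frac{\prod_a s_{j_a}\big(\{w_j(t_k-t_j)\}_{j\ne k}\big)}{\prod_{j\ne k}(t_k-t_j)}=s_J(w_0,\dots,w_m).
\]

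Finally, I would prove this identity by a generating-function argument. Setting $G(s,x)=\prod_{j=0}^m\big(1+w_j(s-t_j)x\big)=\sum_a c_a(s)\,x^a$, the $j=k$ factor vanishes at $s=t_k$, so $c_a(t_k)=s_a(\{w_j(t_k-t_j)\}_{j\ne k})$; hence the numerators equal $Q(t_k)$ for the single polynomial $Q(s)=\prod_a c_{j_a}(s)$. Each $c_a$ has degree $a$ with leading coefficient $s_a(w_0,\dots,w_m)$, so by $\sum_a j_a=m$ the polynomial $Q$ has degree $m$ and leading coefficient $s_J(w)$. The left-hand side is then the $m$-th divided difference $Q[t_0,\dots,t_m]$, which for a polynomial of degree $m$ equals its leading coefficient $s_J(w)$, completing the proof. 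I expect the main obstacle to be the second step: correctly pinning down the factor $s_1(w)$ in the period (the phase of $\sigma$ winds along $L_k$ at rate $s_1(w)$, not $\sum_{j\ne k}w_j$) and matching it with the equivariant weight normalization, so that the sum collapses precisely to the divided-difference identity — the closing interpolation argument itself being routine once the normalizations are right.
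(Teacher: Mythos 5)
Your proposal is correct, and its geometric skeleton coincides with the paper's: both specialize Corollary~\ref{cor:trK2} to $\F_w$, compute the periods $\int_{L_k}u_1(\F_w)=(w_0+\cdots+w_m)/w_k$ via Lemma~\ref{lem:u1}, and import the transverse weights $\alpha^k_j(v+\RR b_w)=\tfrac{\beta_k}{w_k}w_j-\beta_j$ from Section~\ref{subseq:Deformation}. You diverge in two places. For the period, the paper writes down $u_1(\F_w)$ explicitly as $\tfrac{w_0+\cdots+w_m}{2\pi\sqrt{-1}}\sum_j\tfrac{\bar z_j\,dz_j}{w_j}$ along $T\F$, whereas you extract $u_1(\F_w)(\xi_w)=\tfrac{1}{2\pi}\sum_i w_i$ by contracting $d\sigma=\sqrt{-1}\,(\sum_i w_i)\,dz_0\wedge\cdots\wedge dz_m$ with $\xi_w$ and using $\sigma=\iota_{\xi_w}(dz_0\wedge\cdots\wedge dz_m)$ and $\iota_{\xi_w}\sigma=0$; this is a tidier route to the same number. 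The substantial difference is the closing symmetric-function identity. The paper sets $\beta_j=1$ and proves $\sum_k s_J((w_j-w_k)_{j\neq k})\prod_{j\neq k}w_j\big/\prod_{j\neq k}(w_j-w_k)=s_J(w)$ by showing the left side is a homogeneous degree-$m$ polynomial (a divisibility argument across the hyperplanes $w_s=w_t$) that agrees with $s_J$ whenever some $w_j=0$, finishing with a monomial-counting step. You keep $v$ general, write the weights as $w_j(t_k-t_j)$ with $t_i=\beta_i/w_i$, identify the sum as the $m$-th divided difference $Q[t_0,\ldots,t_m]$ of the degree-$m$ polynomial $Q(s)=\prod_a c_{j_a}(s)$, $c_a(s)=s_a(w_0(s-t_0),\ldots,w_m(s-t_m))$, and read off its leading coefficient $s_J(w)$. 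This Lagrange-interpolation argument is cleaner, proves a slightly more general identity (the paper's is the specialization $t_j=1/w_j$), and avoids the polynomiality and monomial-counting steps entirely. When writing it up you should make explicit the two points the paper also treats implicitly: that the uniform sign and normalization in $i^*_{L_k}s_{a,\mfa}$ cancel in the ratio $s_{J,\mfa}/s_{m,\mfa}$ because $\deg s_J=\deg s_m$, and that Lemma~\ref{lem:u1} determines $u_1(\F_w)$ only on $T\F$, which is all the period over the one-dimensional leaf $L_k$ requires.
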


\begin{proof}
By Lemma~\ref{lem:u1}, we have
\[
u_{1}(\F_{w})(X) = \frac{w_{0} + \ldots + w_{m}}{2\pi \sqrt{-1}} \left(\sum_j \frac{\overline{z}_{j} dz_{j}}{w_{j}} \right)(X)
\]
for $X \in C^{\infty}(TL_{k})$. Then we have
\begin{equation}
\int_{L_{k}} u_{1}(\F_{w}) = \frac{w_{0} +\ldots +  w_{m}}{w_{k}}.
\end{equation}
for $k=0,\ldots,m$.

We compute $i^*_{L_{k}}s_{J,\mfa}(\nu \F, \F)$ for $k=0$, $\ldots$, $m$. Consider the equivariant curvature form $F^{\theta}_{\mfa}=d_{\mfa}\theta + \frac{1}{2}[\theta,\theta] \in \Omega^{2}_{\mfa}(L_{k},\F) \otimes \operatorname{End}(\nu^{1,0} \F)$ of $i^*_{L_{k}}\nu^{1,0} \F$. Since $L_{k}$ is of dimension one, we have $F^{\theta}_{\mfa}(X) = - \iota_{X} \theta$ for $X \in \mfa$, where $\theta$ is the connection form of the Chern connection of $i^*_{L_{k}}\nu^{1,0} \F$. Letting $\{\alpha^k_j\}_{j=0, \ldots, \hat k, \ldots, m}$ be the weights of the transverse isotropy $\mfa$-representation at $L_k$, one can express $\iota_{X} \theta$ as a diagonal matrix whose diagonal entries are $\{\alpha_{j}^{k}(X)\}_{j=0, \ldots, m, j \neq k}$. It follows that
\[
i^*_{L_{k}}s_{J,\mfa}(\nu \F, \F)(X) = (-1)^{m}s_{J}((\alpha_{j}^{k}(X))_{j\neq k}).
\]
But we computed the transverse weights $\{\alpha^j_k\}$ of this example in Section \ref{subseq:Deformation}: Identifying $\mfa$ with $\mft/\RR b_{w}$, where $\mft$ is the standard torus acting on $\CC^{m+1}$ with standard basis $\{e_{j}\}$, for $v + \RR b_{w}=\sum_{i=0}^m \beta_je_j + \RR b_{w} \in \mfa$, it is $\alpha^k_j(v+\RR b_w) = \frac{\beta_k}{w_k} w_j - \beta_j$. Thus, by Corollary~\ref{cor:trK2}, we get
\begin{align*}
\int_{S^{2m+1}} u_1s_J(\F_{w}) &= (w_0+\cdots + w_m)\sum_{k=0}^m \frac{1}{w_k} \frac{s_J\left(\left(\frac{\beta_k}{w_k} w_j - \beta_j\right)_{j\neq k}\right)}{\prod_{j\neq k} \left(\frac{\beta_k}{w_k} w_j - \beta_j\right)}\\
&=  (w_0+\cdots + w_m)\sum_{k=0}^m \frac{1}{w_k} \frac{s_J(( w_j - w_k)_{j\neq k})}{\prod_{j\neq k} (w_j - w_k)}
\end{align*}
where in the second line we have set $\beta_j=1$, using the fact that the expression is independent of the chosen vector $v$. To finish the proof we have to show the following identity for elementary symmetric polynomials:
\begin{equation}
\sum_{k=0}^m \frac{1}{w_k} \frac{s_J(( w_j - w_k)_{j\neq k})}{\prod_{j\neq k} (w_j - w_k)} = \frac{s_{J}(w_{0},\ldots,w_m)}{\prod_j w_{j}}.
\end{equation}
By multiplying $\prod_j w_{j}$ to both sides, this is equivalent to
\begin{equation}\label{eq:w1}
\sum_{k=0}^m \frac{s_J(( w_j - w_k)_{j\neq k}) \prod_{j \neq k} w_{j} }{\prod_{j\neq k} (w_j - w_k)} = s_{J}(w_0,\ldots,w_m).
\end{equation}
Let $P(w_{0}, \ldots, w_{m})$ be the left hand side. First we prove that $P(w_{0}, \ldots, w_{m})$ is a polynomial in the variables $w_{j}$. By reducing the fraction to a common denominator, we have
\[
P(w_{0}, \ldots, w_{m}) = \frac{\sum_{k=0}^m  s_J(( w_j - w_k)_{j\neq k} )\left(\prod_{j \neq k} w_{j}\right) F_{k}(w_0,\ldots,w_m)}{\prod_{i < h} (w_{i} - w_{h})},
\]
where $F_{k}(w_0,\ldots,w_m)=(-1)^{m-k}\prod_{i < h,\, i,h \neq k} (w_{i} - w_{h})$. Note that for $s\neq t$ we have $F_t = (-1)^{t-s-1} F_s$ on the subspace $w_s=w_t$. The nominator
\begin{equation}\label{eq:strangenominator}
\sum_{k=0}^m s_J(( w_j - w_k)_{j\neq k}) \left(\prod_{j \neq k} w_{j}\right) F_{k}(w_0,\ldots,w_m)
\end{equation}
is divisible by $w_{s} - w_{t}$ for any $s < t$: Indeed, if we substitute $w_{s} = w_{t}$ in \eqref{eq:strangenominator}, only the $s$-th and $t$-th summands are nonzero, and they cancel each other due to $F_t=(-1)^{t-s-1}F_s$. This shows that $P(w_{0}, \ldots, w_{m})$ is a polynomial in the variables $w_{j}$, and it is  obviously homogeneous of degree $2|J|=m$.

We see that $P(w_{0}, \ldots, w_{m}) = s_{J}(w_{0},\ldots,w_m)$ holds when at least one of the $w_{j}$ is zero. Indeed, if $w_{s}=0$, then only the $s$-th summand of $P(w_{0}, \cdots, w_{m})$ is nonzero, and of the form $s_{J}((w_j)_{j\neq s})=s_J(w_0,\ldots,w_{s-1},0,w_{s+1},\ldots,w_m)$, which is exactly the right hand side of Equation \eqref{eq:w1}. It follows that all the monomials on the left and right hand side of Equation \eqref{eq:w1} which do not involve all the variables $w_j$ coincide. But as both sides are polynomials of degree $m$ in $m+1$ variables, all monomials are of this form.
\end{proof}

\appendix
\section{The equivariant basic Thom homomorphism}
\subsection{Equivariant basic Euler classes}\label{sec:eqcharclasses}
Let us recall the definition of the equivariant basic Euler class. It is a special case of equivariant basic characteristic classes (see \cite[Section 7.1]{Toeben}).
Consider a foliated $\SO(r)$-bundle $(P,\E)\to (N,\F)$ (see \cite[Section 2]{Toeben}), with a transverse $\mfh$-action on total and base space that commutes on the former with the $\SO(r)$-action, and with an $\mfh$-invariant connection $\theta\in \Omega^1(P,\E)\otimes \so(r)$.

\begin{defn}\label{defn:eqbasicEulerform}
The \emph{equivariant basic Euler form} $e_\mfh(P,\E)$ of $(P,\E)\to (N,\F)$ is defined by
\begin{equation}\label{eq:eqbasiceuler}
e_\mfh(P,\E)(X) = \frac{1}{(2\pi)^{r/2}}{\det}^{1/2}(F^\theta_\mfh(X)) = \frac{1}{(2\pi)^{r/2}}{\det}^{1/2}(F^\theta - \iota_X\theta),
\end{equation}
where $F^\theta_\mfh = d_\mfh \theta + \frac{1}{2}[\theta,\theta]\in \Omega^2_{\mfh}(P,\E)\otimes \so(r)$ is the equivariant curvature form and $\frac{1}{(2\pi)^{r/2}}{\det}^{1/2}\in S(\so(r)^*)^{\so(r)}$ is the Pfaffian, where $\det^{1/2}$ of a $2n\times 2n$ block diagonal matrix whose $i$th block matrix is $\left(\begin{smallmatrix} 0 & -\lambda_i \\ \lambda_i & 0 \end{smallmatrix}\right)$ is defined to be $(-1)^n\lambda_1\cdots\lambda_n$. The connection form $\theta$, on which the Euler form depends, is suppressed in the notation for simplicity.
\end{defn}

For our applications the most important special case is the equivariant basic Euler form of the normal bundle of a connected component $N$ of the union of closed leaves $C$ of a Killing foliation $\F$. We assume $\nu N$ to be oriented. Let $P$ be the corresponding principal foliated bundle of its oriented orthonormal frames. It carries a Riemannian foliation $\E$, where a leaf through a given orthonormal frame $\xi$ of $\nu N$ with foot point $p$ consists of all those frames obtained by sliding $\xi$ along the leaf through $p$. Then the projection $(P,\E)\to (N,\F|N)$ is a foliated principle bundle. Since the $\mfa$-action of $(M,\F)$ is trivial on $N$ and preserves each normal space $\nu_p N$, the foliated bundle is equivariant with respect to the restricted $\mfa$-action. By $e_\mfa(\nu N,\F):=e_\mfa(P,\E)$ we denote the equivariant basic Euler form of this foliated bundle.
Let us explain that in this case, similarly to the ordinary equivariant Euler form, it is just the product of the weights of the (transverse) $\mfa$-isotropy representation. The weights appear in the following way. As $\mfa$ acts transversely isometrically (\cite{GT2010}), we have the representation $\mfa\to \mathfrak{so}(\nu_pN);\, X\mapsto [X^\flat,\cdot]$, $p\in N$ arbitrary, where $X^{\flat}$ denotes the associated transverse vector field for $X\in \mfa$. Since $\mfa$ is abelian, with respect to some oriented orthonormal frame $\xi:\RR^k\to \nu_pN$, every $X\in\mfa$ is a block diagonal matrix with blocks of the form $\left(\begin{smallmatrix} 0 & -\alpha_j(X) \\ \alpha_j(X) & 0 \end{smallmatrix}\right)$ which defines the weights $\alpha_j\in \mfa^*$ and weight subbundles $V_j$ of $\nu N$.
\begin{prop}\label{prop:Eulerclassweights}
Let $\F$ be a Killing foliation of codimension $k$, and let $N$ be a connected component of the union of closed leaves, with oriented normal bundle $\nu N$. Then $\nu N$ can be decomposed as
\begin{equation}\label{eq:nuCdectoric}
\nu N = \bigoplus_{j}V_j\;,
\end{equation}
where $V_j$ is an $\mfa$-invariant vector bundles of rank $2$ whose weight is given by $\alpha_j$. Then
\begin{equation}\label{eq:weights}
e_{\mfa}(\nu N, \F)(X) = \frac{1}{(-2\pi)^{k/2}}\prod_{j} \alpha_j(X) + \mbox{(terms of lower polynomial degree)}.
\end{equation}
If, in particular, $N$ consists of only one leaf, then the terms of lower polynomial degree vanish.
\end{prop}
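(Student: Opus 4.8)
The plan is to compute the equivariant basic Euler form directly from Definition~\ref{defn:eqbasicEulerform}, isolate its term of top polynomial degree in $X$ (which will give the product of weights), and show that the remaining, curvature-borne terms sit in lower polynomial degree and vanish when $N$ is a single leaf. First I would produce the splitting~\eqref{eq:nuCdectoric}: since $\mfa$ is abelian and acts transversely isometrically on $\nu N$, the endomorphisms $X\mapsto[X^\flat,\cdot]\in\so(\nu_pN)$ form a commuting family of skew-symmetric operators on each fibre, hence are simultaneously orthogonally reducible into $2\times2$ blocks with complexified eigenvalues $\pm i\,\alpha_j(X)$, $\alpha_j\in\mfa^*$. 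As $N$ is connected, the weights and their multiplicities are constant, so the eigenspaces organize into smooth $\mfa$-invariant subbundles; grouping conjugate pairs gives the rank-$2$ summands $V_j$ of weight $\alpha_j$. Because $N$ is a component of the locus of closed leaves, a zero-weight normal direction would be $\mfa$-fixed and hence tangent to $N$, so every $\alpha_j\neq0$ and $k$ is even.

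Next I would unwind~\eqref{eq:eqbasiceuler}, which gives $e_\mfa(\nu N,\F)(X)=\frac{1}{(2\pi)^{k/2}}{\det}^{1/2}(F^\theta-\iota_X\theta)$ for an $\mfa$-invariant connection $\theta$ on the foliated orthonormal frame bundle and its curvature $F^\theta$. Here $F^\theta$ is a basic two-form (polynomial degree $0$ in $X$) while $\iota_X\theta$ is a basic function of polynomial degree $1$; as ${\det}^{1/2}$ is homogeneous of degree $k/2$ in the matrix entries, the unique summand of top polynomial degree $k/2$ is ${\det}^{1/2}(-\iota_X\theta)$, and every other summand contains a factor $F^\theta$ and so lies in strictly lower polynomial degree. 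The key computation is that over $N$ the fundamental field generated by $X$ is vertical (as $\mfa$ fixes $N$ transversely), so $\iota_X\theta$ returns the generator of the induced fibrewise rotation; in a frame adapted to~\eqref{eq:nuCdectoric} the matrix $-\iota_X\theta$ has blocks $\left(\begin{smallmatrix} 0 & -\alpha_j(X) \\ \alpha_j(X) & 0 \end{smallmatrix}\right)$. Applying the ${\det}^{1/2}$ convention of Definition~\ref{defn:eqbasicEulerform} with $\lambda_j=\alpha_j(X)$ gives ${\det}^{1/2}(-\iota_X\theta)=(-1)^{k/2}\prod_j\alpha_j(X)$, so the top-degree part of $e_\mfa(\nu N,\F)(X)$ is $\frac{1}{(-2\pi)^{k/2}}\prod_j\alpha_j(X)$, exactly~\eqref{eq:weights}.

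For $N=L$ a single leaf I would then show the lower-degree terms drop out: every tangent vector of $L$ is tangent to $\F|L$, so a positive-degree basic form on $(L,\F|L)$ vanishes, and in particular the basic curvature $F^\theta$ restricts to $0$ on $L$. Hence only the top polynomial-degree term survives and $e_\mfa(\nu L,\F)=\frac{1}{(-2\pi)^{k/2}}\prod_j\alpha_j$ with no error terms.

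The step I expect to be the main obstacle is the precise identification of $\iota_X\theta|_N$ with the (negative of the) linearized isotropy representation $X\mapsto[X^\flat,\cdot]$, together with the sign bookkeeping that produces $(-2\pi)^{k/2}$ rather than $(2\pi)^{k/2}$: this is where the analytic definition of the Euler form meets the representation-theoretic weights, and it hinges on the convention relating the invariant connection $\theta$ to the fundamental vertical vector fields on the foliated frame bundle. The global smoothness of the rank-$2$ splitting when weights coincide is by comparison a minor point, and is irrelevant to the product $\prod_j\alpha_j$ anyway.
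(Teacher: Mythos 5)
Your proposal is correct and follows essentially the same route as the paper: both identify the top polynomial-degree part of ${\det}^{1/2}(F^\theta-\iota_X\theta)$ with ${\det}^{1/2}(-\iota_X\theta)$, recognize $-\iota_{X^\flat}\theta$ along $N$ as the block-diagonal matrix of isotropy weights, and use that positive-degree basic forms on a single leaf vanish to kill $F^\theta$ there. The only cosmetic difference is that the paper extracts the purely polynomial part by restricting to a generic leaf of $\F|N$ (and carries out the explicit flow computation identifying $\theta_\xi(-X^\flat_\xi)$ with the weight matrix, which is the step you correctly flagged as the crux), whereas you argue by counting polynomial degree in the Pfaffian expansion over all of $N$.
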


\begin{proof}
The purely polynomial part of the equivariant basic Euler form $e_\mfa(\nu N,\F)\in H_\mfa(N,\F)=H(N,\F)\otimes S(\mfa^*)$ is the equivariant basic Euler form $e_\mfa(\nu N|L,\F)\in H_\mfa(L,\F)=S(\mfa^*)$ of its restriction to a generic leaf $L$ of $\F|N$.
Let $\xi:\RR^k\to\nu_pN$ be an adapted orthonormal frame of the normal space as before and denote by $\theta$ the $\so(k)$-connection associated to the Levi-Civita connection of the transverse metric on $\nu N$. We consider the induced transverse action of $\mfa$ on the bundle of orthonormal frames of $\nu N$. We claim that $\theta_\xi(-X^\flat_\xi)$, $X\in\mfa$, is the same block diagonal matrix described above with blocks of the form $\left(\begin{smallmatrix} 0 & -\alpha_j(X) \\ \alpha_j(X) & 0 \end{smallmatrix}\right)$. Below, we consider the flow on local leaf spaces generated by the transverse vector field $X^\flat$.  Now observe
\begin{align*}
\{\RR^k\ni v\longmapsto [X^\flat,\xi(v)](p)\} &=\left\{v\longmapsto \left.\frac{d}{dt}\right|_{t=0} d\exp(-tX^\flat)\xi(v)\right\} \\ &= \left.\frac{d}{dt}\right|_{t=0} (d\exp (-tX^\flat)\circ\xi) =-X^\flat_\xi =\xi\circ\theta_\xi(-X^\flat_\xi).
\end{align*}
In the second equality we pass from the isometric action on the local leaf space to the induced action on the orthonormal frame bundle of the tangent bundle of the local leaf space; we see that $\exp(-tX^\flat)\circ\xi$ is a vertical curve in the frame bundle with tangent vector $-X^\flat_\xi$ in $\xi$. This proves our claim. Since $\Omega(L,\F)$ is trivial for degrees greater than $0$, the restriction of $F^\theta$ to $L$ vanishes and we have
\begin{align*}
e_{\mfa}(\nu N|L, \F)(X) &= \frac{1}{(2\pi)^{k/2}}{\det}^{1/2}(-\iota_{X^{\flat}} \theta)  \\ &= \frac{1}{(2\pi)^{k/2}}\prod_{j} {\det}^{1/2}\left(\begin{smallmatrix} 0 & -\alpha_j(X) \\ \alpha_j(X) & 0 \end{smallmatrix}\right) = \frac{1}{(-2\pi)^{k/2}}\prod_{j} \alpha_j(X).
\end{align*}

\end{proof}

\subsection{The basic Thom form}\label{sec:universalthomform}
In this section we define equivariant basic Thom classes by adapting the construction of Mathai and Quillen \cite{MathaiQuillen} of a universal Thom form to our basic setting. We follow \cite[Sections 10.2 and 10.3]{GS}. The basic steps of this construction have also been indicated in \cite[Section 6]{Toeben}.

Assume that $\F$ is Killing, so that we have the natural transverse action of the structural Killing algebra $\mfa$. Let $i:N\to M$ be a closed stratum of the $\mfa$-action, i.e., $N$ is a connected component of the union $C$ of closed leaves of $\F$. Let $r$ be the codimension of $N$. Denote by $p:\nu N\to N$ the normal bundle of $N$, which we identify with a small saturated tubular neighborhood around $N$ by the normal exponential map $\nu N \to M$. We denote the induced foliation on $\nu N$ by $\whF$. Let $P$ be the $\SO(r)$-bundle of oriented frames in $\nu N$. On $P$ we have a natural foliation $\E$; a leaf of $\E$ through a given frame $\xi$ consists of all frames that are obtained by sliding $\xi$ along the leaf of $\F$ containing the foot point of $\xi$. Now the projection $\pi:P\times \RR^r\to P\times_{\SO(r)} \RR^r = \nu N$ sends the product foliation $\E\times \{*\}$ to $\whF$.

Let $\mu\in \Omega_{\so(r)}(\RR^r)$ be the universal Thom form of Mathai and Quillen given by
\[
\mu(X) = \frac{e^{-\|x\|^2}}{\pi^{r/2}} \sum_I \varepsilon_I {\det}^{1/2}\left(\frac{X_I}{2}\right)dx_{I^c},
\]
see also \cite[Eq.~(40)]{Meinrenken}. Here, the sum is taken over all subsets $I$ of $\{1,\ldots r\}$ with an even number of elements. For $X\in \so(r)$, $X_I$ denotes the matrix obtained from $X$ by deleting those rows and columns that are not in $I$, and $I^c$ denotes the complement of $I$. The sign $\varepsilon_I\in \{\pm 1\}$ is defined by $dx_{I} dx_{I^c} = \varepsilon_I dx_1\cdots dx_r$. We let $\rho$ be the $\SO(r)$-equivariant diffeomorphism from the open unit ball $B\subset \RR^r$ to $\RR^r$ given by
\[
\rho(x) = \frac{x}{1-\|x\|^2}\;.
\]
Then $\rho^*\mu$ can be extended to an equivariant differential form of compact support on $\RR^r$ by setting it to zero outside of $B$. We consider now the form $\rho^*\mu\otimes 1\in \Omega_{\mfa\times \so(r),cv}(\RR^r)$, where $\mfa$ is supposed to act trivially on $\RR^r$.

Note that the connection form $\theta$ on $P$ of the canonical basic Riemannian connection is $\mfa$-invariant. The Cartan map with respect to $\theta$ is defined by
\[\begin{array}{cccc}
\kappa_\mfa : & \Omega_{\mfa\times \so(r),cv}(P\times \RR^r,\E\times\{*\})& \longrightarrow & \Omega_{\mfa,cv}(\nu N,\whF); \\
   & f\otimes \omega & \longmapsto & f(F^\theta_\mfa)\wedge \omega_{\hor \so(r)},
\end{array}\]
where $F^\theta_\mfa = d_\mfa \theta + \frac12 [\theta,\theta]\in \Omega^2_\mfa(P,\E)\otimes \so(r)$ is the $\mfa$-equivariant curvature of $\theta$, and the subscript $\hor \so(r)$ denotes projection on the horizontal component (for the projection see e.g.~\cite[p.~58]{GS}). Moreover, we consider here equivariant differential forms with compact vertical support (on the left hand side with respect to the vector bundle projection $P\times \RR^r\to P$). These do not form a differential graded algebra of type (C), as the connection form does not necessarily have compact support, but still they are a $W^*$-module in the sense of \cite[Section 3.4]{GS}.

Let ${\mathrm{pr}}_2:P\times \RR^r\to \RR^r$ be the second projection.

\begin{defn}
The {\em equivariant basic Thom class} $\Phi_{\mfa}$ of $(N,\F)$ in $(M,\F)$ is defined as the cohomology class of the image of $\rho^*\mu \otimes 1$ under  the composition
\[
\Omega_{\mfa\times \so(r),cv}(\RR^r) \overset{{\mathrm{pr}}_2^*}{\longrightarrow} \Omega_{\mfa\times \so(r),cv}(P\times \RR^r,\E\times\{*\})\overset{\kappa_\mfa}\longrightarrow \Omega_{\mfa,cv}(\nu N,\whF),
\]
where the compact vertical support in the middle refers to the projection $P\times \RR^r\to P$. So
\begin{equation}\label{eq:thomform}
\Phi_\mfa := \kappa_\mfa({\mathrm{pr}}_2^*(\rho^*\mu \otimes 1)).
\end{equation}
\end{defn}

We state two fundamental properties of equivariant basic Thom classes. The map $p:\nu N\to N$ as a foliated map is equivariant with respect to the transverse action of the structural Killing algebra $\mfa$. Therefore the fiber integration $p_*:\Omega^{r+\bullet}_{cv}(\nu N,\whF)\to\Omega^{\bullet}(N,\F)$ is an $\mfa$-dga map. Thus we obtain an $S(\mfa^*)$-homomorphism $p_*:H^{r+\bullet}_{cv,\mfa}(\nu N,\whF)\to H^\bullet_\mfa(N,\F)$, which is an isomorphism (see \cite[p.~20]{Toeben}).

\begin{lemma}
We have
\begin{align}
p_*\Phi_\mfa & =1, \label{eq:thomisthom}\\
j^*\Phi_\mfa & = e_\mfa(\nu N,\F), \label{eq:thomeuler}
\end{align}
where $j:N \to \nu N$ is the canonical inclusion and $e_\mfa(\nu N,\F)$ is the equivariant basic Euler class of the bundle $P\to N$ (see Eq.~\eqref{eq:eqbasiceuler}).
\end{lemma}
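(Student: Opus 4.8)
The plan is to reduce both identities to the two characteristic properties of the universal Mathai--Quillen form $\mu\in\Omega_{\so(r)}(\RR^r)$ that are immediate from its explicit formula, namely that it integrates to $1$ over the fiber and that its restriction to the origin is the universal Pfaffian. First I would record these two facts. For the normalization, only the summand with $I=\emptyset$ contributes a top-degree form $dx_1\cdots dx_r$, with ${\det}^{1/2}(X_\emptyset/2)=1$, so that $\int_{\RR^r}\mu(X)=\pi^{-r/2}\int_{\RR^r}e^{-\|x\|^2}\,dx_1\cdots dx_r=1$ by the Gaussian integral, independently of $X\in\so(r)$. For the restriction, the pullback $j^*$ to the zero section annihilates every $dx_{I^c}$ except the one with $I^c=\emptyset$, i.e.\ $I=\{1,\ldots,r\}$, leaving $\mu(X)|_{x=0}=\pi^{-r/2}{\det}^{1/2}(X/2)$.

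Next I would transport these along the two maps in the definition \eqref{eq:thomform}. Since $\rho:B\to\RR^r$ is an orientation-preserving diffeomorphism fixing the origin, the change-of-variables formula gives $\int_B\rho^*\mu=\int_{\RR^r}\mu=1$, while $\rho(0)=0$ gives $(\rho^*\mu)(X)|_{x=0}=\mu(X)|_{x=0}$; extending $\rho^*\mu$ by zero outside $B$ changes neither quantity. It then remains to understand how the Cartan map $\kappa_\mfa$ interacts with fiber integration and with restriction to $N$.

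For \eqref{eq:thomisthom}, the key point is that fiber integration $p_*:\Omega^{r+\bullet}_{\mfa,cv}(\nu N,\whF)\to\Omega^\bullet_\mfa(N,\F)$ is compatible with the Chern--Weil substitution implemented by $\kappa_\mfa$: integrating out the $\RR^r$-fiber acts only on the $\mathrm{pr}_2$-factor and commutes both with substituting the equivariant curvature $F^\theta_\mfa$ for the $\so(r)$-variable and with horizontal projection. Hence $p_*\Phi_\mfa=\kappa_\mfa\big(\textstyle\int_{\RR^r}\rho^*\mu\big)=\kappa_\mfa(1)=1$, the last step using that $\kappa_\mfa$ sends the constant $1$ to $1\in\Omega_\mfa(N,\F)$. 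For \eqref{eq:thomeuler}, restriction to the zero section sets $x=0$, so $j^*\Phi_\mfa=\kappa_\mfa\big(\mu(\cdot)|_{x=0}\big)=\pi^{-r/2}{\det}^{1/2}(F^\theta_\mfa/2)$; pulling the scalar $2^{-r/2}$ out of the $(r/2)$-fold Pfaffian converts the prefactor $\pi^{-r/2}$ into $(2\pi)^{-r/2}$, and comparison with Definition~\ref{defn:eqbasicEulerform} yields $j^*\Phi_\mfa=e_\mfa(\nu N,\F)$.

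The main obstacle I expect is justifying the compatibility of $\kappa_\mfa$ with fiber integration in the equivariant basic setting. In the classical non-foliated case this is the standard computation underlying the Thom isomorphism; here one must check that it survives the passage to forms basic with respect to $\whF$ and to the transverse $\mfa$-action. This works precisely because every ingredient---the connection form $\theta$, its equivariant curvature $F^\theta_\mfa$, and the horizontal projection---is built from the $\mfa$-invariant canonical basic Riemannian connection, so the substitution of $F^\theta_\mfa$ affects only form-degrees along $P$ and leaves the $\RR^r$-integration untouched. Thus the foliation enters only through $\kappa_\mfa$, and the fiberwise computation is exactly the classical Mathai--Quillen one.
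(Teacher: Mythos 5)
Your proposal is correct and is precisely the standard Mathai--Quillen computation that the paper itself does not reproduce but delegates to \cite[Sections 10.4 and 10.5]{GS}: the normalization $\int_{\RR^r}\mu=1$ from the $I=\emptyset$ term, the restriction to the origin picking out the $I=\{1,\dots,r\}$ term, and the compatibility of $\kappa_\mfa$ with fiber integration and with pullback along the zero section, all of which survive in the basic setting because $\theta$ is the $\mfa$-invariant canonical basic connection. Your bookkeeping of the constants (${\det}^{1/2}(X/2)=2^{-r/2}{\det}^{1/2}(X)$ turning $\pi^{-r/2}$ into $(2\pi)^{-r/2}$) matches Definition~\ref{defn:eqbasicEulerform}, so the argument goes through as written.
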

For the proof, we refer to \cite[Section 10.4]{GS} and \cite[Section 10.5]{GS}, respectively (see also \cite{Meinrenken}). As $p_*$ is an isomorphism as mentioned above, \eqref{eq:thomisthom} characterizes the equivariant basic cohomology class $[\Phi_\mfa]$.

\subsection{The equivariant basic Thom homomorphism}

The following is a well known result in the classical context, which is necessary to define the equivariant basic Thom homomorphism. Recall that $N$ is a connected component of the union $C$ of closed leaves.
\begin{lemma}\label{lem:A5}
A map $\tau_\mfa: \Omega^{\bullet}_\mfa(N,\F) \longrightarrow \Omega^{r+\bullet}_{\mfa,cv}(\nu N,\whF)$ defined by
$$
\tau_\mfa(\omega)=p^*\omega\wedge \Phi_\mfa,
$$
where $\Phi_\mfa\in \Omega_{\mfa,cv}^r(\nu N,\whF)$ is the basic Thom form (see \eqref{eq:thomform}), induces the inverse of $p_*:H^{r+\bullet}_{\mfa,cv}(\nu N,\whF)\to H^\bullet_\mfa(N,\F)$.
\end{lemma}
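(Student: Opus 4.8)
The plan is to reduce the statement to two facts: that $\tau_\mfa$ is a cochain map for the equivariant basic differential, and that it is a right inverse of $p_*$ already at the level of forms via the projection formula. The isomorphism property of $p_*$ on cohomology, which is cited from \cite[p.~20]{Toeben}, then upgrades this right inverse to the genuine inverse.

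First I would check that $\tau_\mfa$ commutes with $d_\mfa$. The Mathai--Quillen construction produces an equivariantly closed form, and both $\mathrm{pr}_2^*$ and the Cartan map $\kappa_\mfa$ in \eqref{eq:thomform} preserve equivariant closedness, so $d_\mfa\Phi_\mfa = 0$. Since $p^*$ is an $\mfa$-dga map and $\Phi_\mfa$ is equivariantly closed, the Leibniz rule gives
$$
d_\mfa \tau_\mfa(\omega) = d_\mfa(p^*\omega\wedge \Phi_\mfa) = p^*(d_\mfa\omega)\wedge \Phi_\mfa = \tau_\mfa(d_\mfa\omega),
$$
so $\tau_\mfa$ descends to a map $H^\bullet_\mfa(N,\F)\to H^{r+\bullet}_{\mfa,cv}(\nu N,\whF)$ on cohomology.

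Next I would apply the equivariant version of the projection formula for fiber integration. Because $p:(\nu N,\whF)\to(N,\F)$ is a foliated, $\mfa$-equivariant bundle projection, the fiber integration $p_*$ is an $S(\mfa^*)$-linear $\mfa$-dga map, so the classical projection formula holds componentwise in the Cartan model: $p_*(p^*\omega\wedge\alpha)=\omega\wedge p_*\alpha$ for any equivariant basic $\omega$ on $N$ and any vertically compactly supported equivariant basic $\alpha$ on $\nu N$. Combining this with $p_*\Phi_\mfa=1$ from \eqref{eq:thomisthom} yields, already at the level of forms,
$$
p_*\big(\tau_\mfa(\omega)\big) = p_*(p^*\omega\wedge\Phi_\mfa) = \omega\wedge p_*\Phi_\mfa = \omega.
$$
Hence $p_*\circ\tau_\mfa=\id$ on $\Omega^\bullet_\mfa(N,\F)$, and a fortiori on $H^\bullet_\mfa(N,\F)$.

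Finally, since $p_*$ is an isomorphism in equivariant basic cohomology, the identity $p_*\circ\tau_\mfa=\id$ forces $\tau_\mfa=(p_*)^{-1}$: composing on the left with $(p_*)^{-1}$ gives $\tau_\mfa=(p_*)^{-1}\circ(p_*\circ\tau_\mfa)=(p_*)^{-1}$. The only point that genuinely requires care is the equivariant projection formula; but since fiber integration only acts on the differential-form tensor factor and is $S(\mfa^*)$-linear, it follows immediately from the non-equivariant projection formula already used in the proof of Lemma~\ref{lem:projformula}. I therefore expect no substantial obstacle beyond bookkeeping.
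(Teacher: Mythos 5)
Your argument is correct and follows essentially the same route as the paper: the paper's proof also evaluates at $X\in\mfa$, applies the ordinary projection formula of \cite[Prop.~IX, I.7.13]{GHV} componentwise together with $p_*\Phi_\mfa=1$ to get $p_*\circ\tau_\mfa=\id$ at the level of forms, and then invokes the previously stated fact that $p_*$ is an isomorphism. Your explicit verification that $\tau_\mfa$ is a cochain map (via equivariant closedness of $\Phi_\mfa$) is a detail the paper leaves implicit, but it is a correct and harmless addition.
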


\begin{proof}
For each equivariant basic form $\omega$ we can verify
\[
(p_*\circ\tau_\mfa(\omega))(X)=p_*(p^*\omega(X)\wedge \Phi_\mfa(X))=\omega(X)\wedge p_*(\Phi_\mfa(X))=\omega(X)
\]
for all $X\in \mfa^*$ by using the projection formula for forms \cite[Prop.~IX, I.7.13]{GHV} and Equation \eqref{eq:thomisthom}.
\end{proof}

\begin{defn}\label{defn:eqThomhom}
The composition
\[
\xymatrix{H^\bullet_\mfa(N,\F) \ar[r]^<<<<<{\tau_\mfa} & H^{r+\bullet}_{\mfa,cv}(\nu N,\whF) \ar[r] & H^{r+\bullet}_\mfa(M,\F)}
\]
is denoted by $i_{*}$ and called the {\em equivariant basic Thom homomorphism}, where the second map is induced by the inclusion $(\nu N,\whF) \to (M,\F)$.
\end{defn}

\end{document}